\newlength{\defbaselineskip}
\newcommand{\setlinespacing}[1]%
           {\setlength{\baselineskip}{#1 \defbaselineskip}}
\numberwithin{equation}{section}
\numberwithin{equation}{section}
\newtheorem{theorem}{Theorem}[section]
\newtheorem{lemma}{Lemma}[section]
\newtheorem{proposition}{Proposition}[section]
\newtheorem{remark}{Remark}[section]
\newcommand{\R}{\mathbb R}
\newcommand{\bq}{\begin{equation}}
	\newcommand{\eq}{\end{equation}}
\newcommand{\lt}{\left}
\newcommand{\rt}{\right}
\newcommand{\lal}{\langle}
\newcommand{\ral}{\rangle}
\newcommand{\pa}{\partial}
\def\moverlay{\mathpalette\mov@rlay}
\def\mov@rlay#1#2{\leavevmode\vtop{%
		\baselineskip\z@skip \lineskiplimit-\maxdimen
		\ialign{\hfil$\m@th#1##$\hfil\cr#2\crcr}}}
\newcommand{\charfusion}[3][\mathord]{
	#1{\ifx#1\mathop\vphantom{#2}\fi
		\mathpalette\mov@rlay{#2\cr#3}
	}
	\ifx#1\mathop\expandafter\displaylimits\fi}
\begin{document}

\allowdisplaybreaks

\title[Global well-posedness for the 3D irrotational Euler-Riesz system]{
	Global smooth solutions to the irrotational Euler-Riesz system in three dimensions}

\author[Choi]{Young-Pil Choi}
\address[Young-Pil Choi]{\newline Department of Mathematics \newline
	Yonsei University, 50 Yonsei-Ro, Seodaemun-Gu, Seoul 03722, Republic of Korea}
\email{ypchoi@yonsei.ac.kr}

\author[Jung]{Jinwook Jung}
\address[Jinwook Jung]{\newline 
	Department of Mathematics and Research Institute for Natural Sciences\newline
	Hanyang University, 222 Wangsimni-ro, Seongdong-gu, Seoul 04763, Republic of Korea}
\email{jinwookjung@hanyang.ac.kr}

\author[Lee]{Yoonjung Lee}
\address[Yoonjung Lee]{\newline Department of Mathematics \newline
	Yonsei University, 50 Yonsei-Ro, Seodaemun-Gu, Seoul 03722, Republic of Korea}
\email{yjglee@yonsei.ac.kr}

\subjclass[2020]{35Q31, 76N10.}
\keywords{Euler--Riesz system, irrotational flow, global existence.}

\begin{abstract} 
This paper investigates the global dynamics of the Euler--Riesz system in three dimensions, focusing on the well-posedness and large-time behavior of solutions near equilibrium. The system generalizes classical interactions by incorporating the Riesz interactions $\nabla (-\Delta)^{-\sigma/2}(\rho - 1)$. We show that the system admits a global smooth solution for small irrotational initial perturbations. Specifically, we establish that if the initial data is sufficiently small, the solution remains regular globally in time and decays over time at a rate dependent on $\sigma$.
\end{abstract}

\maketitle 

\tableofcontents

%
%
%
%
%
%
%
%
%

\section{Introduction}
This paper investigates the Euler--Riesz system in three dimensions, described by the equations:
\begin{align}\label{ER_0}
	\begin{aligned}
		&\pa_t \rho + \nabla \cdot (\rho u) = 0, \quad (x,t)\in \R^3 \times \R_+,  \cr
		&\pa_t (\rho u) + \nabla \cdot (\rho u \otimes u) +\nabla(c_{\gamma}\rho^{\gamma}) = - \rho\frac{ \nabla }{|\nabla|^{\sigma}} (\rho-1)
	\end{aligned}
\end{align}
with initial data
\[
(\rho(x, 0), u(x, 0))=(\rho_0(x), u_0(x)).
\]
Here, $\rho=\rho(x,t)$ and $u= u(x,t)$ represent the density and velocity of the fluid, respectively, at time $t\in \mathbb{R}_{+}$ and position $x\in \mathbb{R}^3$.
The pressure term is expressed as $c_{\gamma}\rho^{\gamma}$ with a positive constant $c_{\gamma}$ and $\gamma >1$. The background state is assumed to be steady and uniform, and for simplicity, it is taken to be unity.  The operator $|\nabla|^{\sigma}$ is defined by $(-\Delta)^{\sigma/2}$, and this paper focuses on the range $0< \sigma <2$. 

The Euler--Riesz system \eqref{ER_0} encompasses not only the classical Coulomb interaction with $\sigma=2$ but also the so-called {\it pure Manev} correction \cite{BDIV97} with $\sigma=1$ proposed as a modification of Newtonian gravitation. The Riesz interaction serves as a generalization and has been extensively studied in the physics literature, e.g., \cite{BBDR05, Maz11}. It also appears in the study of equilibrium properties of systems of point particles interacting via Coulomb or Riesz interactions and confined by an external potential \cite{LS17, PR18, RS16}. 


\subsection{Main result}
The objective of the current work is to investigate the global dynamics of the Euler--Riesz system \eqref{ER_0} for $0<\sigma<2$. Specifically, we aim to address the global-in-time existence and uniqueness of regular solutions to the system \eqref{ER_0} near the equilibrium $(\rho, u)=(1, 0)$ as well as its large-time behaviors. The local-in-time existence and uniqueness of regular solutions to \eqref{ER_0} have been established in \cite{CJ22}, which also provided sufficient conditions under which finite-time singularity formation can occur. 

A significant challenge in proving the global existence of solutions to the system \eqref{ER_0} arises from the absence of a direct dissipation term in the system. This lack of dissipation complicates the analysis since it can lead to the finite-time breakdown of smoothness of solutions. To address this, some approaches have incorporated linear velocity damping. For instance, the global existence and large-time behavior of solutions to the pressureless system with linear damping were investigated in \cite{CJ23} for small initial perturbations. Additionally, recent work has explored global existence in Besov spaces with critical regularity \cite{CSX}.

In a related context, Danchin and Ducomet \cite{DD22} demonstrated that for $1\leq \sigma<2$, the system \eqref{ER_0} admits a unique global solution with zero background state under a sufficiently small, regular initial density and a dispersive spectral condition on the initial velocity that causes fluid particles to spread. The complete well-posedness of \eqref{ER_0} for $0<\sigma<1$, including both pressureless and pressure cases, has been recently resolved in \cite{CJL}. Further insights into the global dynamics have been obtained from Guo's work \cite{G} for $\sigma=2$. The Poisson interaction force introduces oscillatory behavior that results in a linearized system resembling the Klein--Gordon equation with the  dispersion relation
\[
p(r)=(1+r^2)^{1/2}.
\]
Utilizing Shatah's normal transformation and the Klein--Gordon effect, a global irrotational smooth solution to the 3D Euler--Poisson equations is constructed under small smooth perturbations. This result is complemented by similar studies in \cite{IP13, LW14} for the two-dimensional case and \cite{GHZ17} for the one-dimensional case. Guo's work highlights that dispersion due to the Poisson interaction force can mitigate the finite-time blow-up phenomenon. We also refer to \cite{DIP 17, GIP14, GIP16, GN14, GMP13, GP, HZG12} for the related Euler models. 

From this perspective, the Euler system without interaction force presents a completely different scenario in contrast to the Euler--Poisson system. Following Makino's symmetrization \cite{M86}, the pure Euler system is understood as quadratically nonlinear wave equations with a dispersion relation $p(r)=r$.  For these nonlinear wave equations, the lifespan $T_{\varepsilon}$ of classical solutions is known to be of the order of $ \exp(C/\varepsilon)$ for small initial data of size $\varepsilon$. Notably, Sideris \cite{S85, S91} demonstrated that the lifespan of classical solutions satisfies $T_{\varepsilon}<\exp(C/\varepsilon^2)$ and $T_{\varepsilon}>\exp(C/\varepsilon)$ for small smooth perturbation $(\rho_{\varepsilon}, u_{\varepsilon})=(\varepsilon n_0 +1, \varepsilon u_0)$, where $(n_0, u_0)$ is compactly supported around the constant state $(\rho, u)=(1, 0)$. The sharp asymptotic behavior of $T_{\varepsilon} \sim \exp(C/\varepsilon)$ for spherically symmetric Eulerian flows was obtained in \cite{G05}. These results indicate that singularity formation is inevitable in the pure Euler system, even with sufficiently small initial perturbations.

The central question addressed in the present work is whether a global irrotational solution exists for the 3D Euler--Riesz system \eqref{ER_0}, extending the work of Guo \cite{G05}. Even though the Riesz interaction $0 < \sigma < 2$ is more singular than the Coulombian force ($\sigma=2$), it is expected to play a significant role in ensuring global-in-time stability near the equilibrium $(\rho, u)=(1, 0)$. Throughout this paper, we consider an irrotational initial flow:
\[
\nabla \times u \equiv0
\]
which is preserved in time. For the convenience of presentation, we set $c_{\gamma}=1/3$ and $\gamma=3$ (other cases $\gamma \in (1, 1+2/3]$ can be treated similarly), and then we let $n=\rho-1$ to obtain the symmetrized form of \eqref{ER_0}:
\begin{align}\label{ER_1}
	\begin{aligned}
		&\pa_t n   +\nabla \cdot u + \nabla \cdot (n u)= 0, \quad (x,t)\in \R^3 \times \R_+,  \cr
		&\pa_t u + \nabla n + \nabla (\frac{1}{2}n^2 +\frac{1}{2} |u|^2 ) =  -\frac{\nabla }{|\nabla|^{\sigma}} n
	\end{aligned}
\end{align}
subject to the initial data
\[
(n(x, 0), u(x, 0)) =(n_0(x):=\rho_0(x)-1, u_0(x)), \quad x \in \R^3.
\]

We study the global well-posedness of the reformulated system \eqref{ER_1} with small initial data, demonstrating that \eqref{ER_0} admits a smooth global solution under a small irrotational initial perturbation. To facilitate our analysis, we introduce the following function spaces: for $s\ge 16$
 \[
  \begin{aligned}
 \| f \|_{X} := \||\nabla|^{-\frac{2-\sigma}{2}}  f\|_{H^{2s}}  +(1+t)^{\beta_{\sigma}} \|f \|_{W^{s, p}} \quad \mbox{and} \quad  \|f\|_{Y} := \||\nabla|^{-\frac{2-\sigma}{2}}f\|_{H^{2s}} 
 + \|f\|_{W^{s+6(\frac{1}{2}-\frac{1}{p}), p'}}
 \end{aligned}
 \]
where $p'$ is the H\"older conjugate of $p$. Here, $ \beta_{\sigma}$ is defined by
 \bq\label{beta}
 \beta_{\sigma}=
 \begin{cases}
 	3(\frac{1}{2}-\frac{1}{p}) \quad \text{when } 0<\sigma\leq 1,\\[1mm]
 	\frac{8}{3}(\frac{1}{2}-\frac{1}{p}) \quad \text{when } 1<\sigma<2,
 \end{cases}
 \eq 
and $p$ is chosen according to
 \bq\label{p}
 \begin{aligned}
 	&6<p<\infty \quad \text{when } 0<\sigma\leq 1,\\
 	&8<p<\infty \quad \text{when } 1<\sigma<2.
 \end{aligned}
 \eq 
Then our main result is stated as follows.
 	\begin{theorem}\label{global}
 	There exists $\varepsilon>0$ such that for any initial perturbation $(n_0+1, u_0)$ satisfying
 	$$
  \|(n_0, u_0)\|_{Y} \leq \varepsilon \quad \text{and} \quad 	\nabla \times u_0 =0, 
 	$$
	the system \eqref{ER_0} admits a global solution $(n+1, u)$ with 
 	\[
 	\sup_{0 \leq t < \infty}\|(n, u)(t)\|_{X} \leq 2\varepsilon.
 	\]
 	Moreover, there exists a constant $C>0$ independent of $t$ such that 
 	\bq \label{result_decay rate}
 	\|(n, u)(t)\|_{W_x^{s, p}} \leq
 	 C(1+t)^{-\beta_{\sigma}}, 
 	\eq
 	where $\beta_{\sigma}$ and $p$ are given as in \eqref{beta} and \eqref{p}, respectively.
 \end{theorem}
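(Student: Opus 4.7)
The plan is a standard bootstrap on the $X$-norm: combining the local theory from \cite{CJ22} with an a priori estimate that improves $\|(n,u)\|_X \leq 2\varepsilon$ to $\|(n,u)\|_X \leq \tfrac{3}{2}\varepsilon$ on $[0,T]$ for $\varepsilon$ sufficiently small yields global existence via continuation. The bound on $X$ splits into a weighted high-Sobolev estimate for the $\||\nabla|^{-(2-\sigma)/2}\cdot\|_{H^{2s}}$ component and a dispersive estimate for the $W^{s,p}$ component.

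First I would exploit irrotationality to reduce \eqref{ER_1} to a scalar dispersive equation. Writing $u=\frac{\nabla}{|\nabla|}v$ for $v:=-|\nabla|^{-1}\nabla\cdot u$, defining $\Lambda:=\sqrt{|\nabla|^2+|\nabla|^{2-\sigma}}$, and setting $h:=n+i\frac{|\nabla|}{\Lambda}v$, the linearization diagonalizes to $(i\pa_t-\Lambda)h=Q(h,\bar h)$ with $Q$ a quadratic Fourier multiplier. The symbol $\omega(\xi)=\sqrt{|\xi|^2+|\xi|^{2-\sigma}}$ behaves like the fractional-Schr\"odinger symbol $|\xi|^{1-\sigma/2}$ at low frequencies and like the wave symbol $|\xi|$ at high frequencies; this dichotomy dictates both the low-frequency weight $|\nabla|^{-(2-\sigma)/2}$ in the $X$-norm and the decay rate $\beta_\sigma$.

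For the high-Sobolev component I would perform a weighted energy estimate directly on \eqref{ER_1} by differentiating at order $2s$ and testing against $|\nabla|^{-(2-\sigma)/2}$-weighted copies of $(n,u)$, using the symmetric energy structure of the linearized system (whose linear contributions cancel after integration by parts) together with Kato--Ponce commutator bounds on the quadratic terms. Placing one factor of each quadratic nonlinearity in $W^{s,p}$ yields
\[
\frac{d}{dt}\||\nabla|^{-(2-\sigma)/2}(n,u)\|_{H^{2s}}^2 \lesssim \|(n,u)\|_{W^{s,p}}\||\nabla|^{-(2-\sigma)/2}(n,u)\|_{H^{2s}}^2,
\]
and the conditions \eqref{p} are chosen precisely so that $\beta_\sigma>1$ and therefore $\int_0^\infty(1+t)^{-\beta_\sigma}\,dt<\infty$, whence Gr\"onwall closes this bound with an $O(\varepsilon)$ constant. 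For the $W^{s,p}$ component, the core ingredient is the linear dispersive inequality
\[
\|e^{-it\Lambda}f\|_{W^{s,p}}\lesssim (1+t)^{-\beta_\sigma}\|f\|_{W^{s+6(1/2-1/p),\,p'}},
\]
to be proved by Littlewood--Paley decomposition and stationary phase on $\omega$, with separate analysis of low and high frequencies matched at $|\xi|\sim 1$. Combined with Duhamel's formula, a fractional Leibniz bound of the shape $\|Q(h,\bar h)(s)\|_Y\lesssim \varepsilon(1+s)^{-\beta_\sigma}\||\nabla|^{-(2-\sigma)/2}h(s)\|_{H^{2s}}$, and the convolution estimate $\int_0^t(1+t-s)^{-\beta_\sigma}(1+s)^{-\beta_\sigma}\,ds\lesssim(1+t)^{-\beta_\sigma}$ (which again requires $\beta_\sigma>1$), this produces $\|(n,u)(t)\|_{W^{s,p}}\leq C(\varepsilon+\varepsilon^2)(1+t)^{-\beta_\sigma}$ and closes the bootstrap.

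I expect the main obstacle to be the linear dispersive estimate for $e^{-it\Lambda}$. The symbol $\omega$ is genuinely non-homogeneous, interpolating between two regimes with qualitatively different stationary-phase behavior, so neither the pure wave nor the pure fractional-Schr\"odinger analysis applies uniformly; one must track the decay exponent and the derivative loss dyadically across frequencies, and the degeneracy of the Hessian of $|\xi|^{1-\sigma/2}$ together with the matching at $|\xi|\sim 1$ is what forces the split $\sigma\leq 1$ versus $\sigma>1$ in \eqref{beta}. A secondary delicate point is that the singular low-frequency weight $|\nabla|^{-(2-\sigma)/2}$ interacts awkwardly with the quadratic nonlinearity, so the fractional product rule used to estimate $Q$ in the $Y$-norm has to be carefully tailored so as to preserve the joint structure of the $X$- and $Y$-norms.
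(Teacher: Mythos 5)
Your overall architecture (local theory plus a bootstrap closing the $X$-norm, with a high-Sobolev energy estimate and a dispersive $W^{s,p}$ estimate) and your reading of the linear dispersive estimate (Littlewood--Paley decomposition, stationary phase, wave-like high frequencies versus fractional-Schr\"odinger-like low frequencies, the Hessian degeneracy forcing the $\sigma\le 1$/$\sigma>1$ split) match the paper. However, the nonlinear part of your argument has a genuine gap: you close the Duhamel iteration using only the raw quadratic nonlinearity, via the claimed bound
\[
\|\mathcal{Q}(\alpha)(s)\|_{Y}\lesssim \varepsilon(1+s)^{-\beta_\sigma}\,\||\nabla|^{-(2-\sigma)/2}\alpha(s)\|_{H^{2s}},
\]
but this bound does not follow from H\"older or fractional Leibniz. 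The $Y$-norm contains the $W^{s+6(1/2-1/p),\,p'}$ component with $p'<2$, and splitting the quadratic term there as $L^p\times L^q$ forces $1/q = 1/p'-1/p = 1-2/p$, i.e. $q = p/(p-2) < 2$. No $L^q$ norm with $q<2$ is controlled by the $X$-norm (the $\dot H^{-(2-\sigma)/2}$ piece only goes the other way via Hardy--Littlewood--Sobolev). If instead you split so that both inputs live in $L^q$ with $q\ge 2$ (as in a Gustafson--Nakanishi--Tsai-type bilinear estimate), the $L^p$ decay is only borrowed with a fractional power, and the resulting decay of $\|\mathcal{Q}(\alpha)(s)\|_{W^{s',p'}}$ falls strictly below $(1+s)^{-1}$ for $p$ in the admissible range, so the Duhamel convolution does not close.

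The missing idea is Shatah's normal form. The paper integrates by parts in time in the Duhamel formula (using $\pa_\tau\hat b = e^{-i\tau p(\xi)}\widehat{\mathcal{Q}(\alpha)}$), which converts the time-integrated quadratic term into two boundary terms $g(t)$, $e^{itp(|\nabla|)}g(0)$ that are quadratic but sit outside the time integral---so they inherit the $(1+t)^{-\beta_\sigma}$ decay directly from one factor of $\alpha(t)$---plus a cubic remainder $h(t)$ under the time integral, which has enough extra decay for the convolution to close. This structural change is exactly what repairs the gap above.

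The normal form also introduces the difficulty you do not address: the resulting bilinear kernels have the small denominator $\Phi_{r,l}(\xi,\eta)=p(\xi)+(-1)^r p(\xi-\eta)+(-1)^l p(\eta)$, and $\Phi_{1,1}$ vanishes on the large set where $|\eta|$ or $|\xi-\eta|$ vanishes (there is no null structure as there is in the Klein--Gordon case). Controlling this requires the modified kernel
$\mathfrak{M}_{r,l}=|\xi|^{(2-\sigma)/2}|\eta|^{(2-\sigma)/2}|\xi-\eta|^{(2-\sigma)/2}\langle\xi-\eta\rangle^{-2\lambda}\langle\eta\rangle^{-2\lambda}/\Phi_{r,l}$, the Gustafson--Nakanishi--Tsai bilinear estimates, and a substantial frequency-by-frequency analysis of $\|\mathfrak{M}_{r,l}\|_{M^{k,\infty}_{\xi,\eta}\cap M^{k,\infty}_{\eta,\xi}}$. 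Absorbing the $|\cdot|^{(2-\sigma)/2}$ factors into the solution is precisely why the $\dot H^{-(2-\sigma)/2}$ weight appears in the $X$- and $Y$-norms, which your outline treats as a low-frequency annoyance rather than an indispensable structural choice. Finally, for $0<\sigma<1$ the Riesz term in the high-Sobolev energy estimate produces $\int \nabla|\nabla|^{2s-\sigma}n\cdot\nabla(|\nabla|^{2s-2}\nabla\cdot(nu))\,dx$, which has regularity $1+2s-\sigma$ on $n$ and cannot be absorbed by standard commutator estimates; the paper resolves this with the density-weighted Sobolev space $\dot{\mathcal{H}}^s$ (testing against $(n+1)^{-1}|\nabla|^{2s-\sigma/2}n$), which is not the same as your proposal of inserting $|\nabla|^{-(2-\sigma)/2}$-weights into the energy identity and would need a separate justification.
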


\begin{remark}
	We provide some remarks on conditions in Theorem \ref{global}.
	\begin{itemize}
		\item[(i)] The lower bound on $p$ in \eqref{p} is required to ensure that $(1+t)^{-\beta_{\sigma}}$ is integrable in time $t$.
		\item[(ii)] The assumption on initial data does not require a neutrality condition. 	
 \end{itemize}
\end{remark}

\begin{remark}
The decay rate specified in \eqref{result_decay rate} exhibits a marginal loss due to $p<\infty$. The critical case $p=\infty$ remains uncovered due to technical difficulties, e.g. \eqref{est_r}. Moreover, we would like to emphasize that the decay rate is obtained through linear flow analysis, as detailed in Proposition \ref{decay} below. Within this context, we believe this rate is optimal within our strategic framework.
\end{remark} 

\begin{remark}
 For the case $\sigma=2$, Guo \cite{G} showed that a global-in-time existence of smooth irrotational solution to \eqref{ER_0} satisfying 
\[
\|(n, u)(t)\|_{L^{p}} \leq C(1+t)^{-3(1/2-1/p)}
\]
with $6<p<\infty$. In comparison, our theorem indicates that for $1<\sigma<2$, the solution decays at a rate of $(1+t)^{-8/3(1/2-1/p)}$, which is slower than the rate for $\sigma = 2$ due to the dispersion relation of the linearized equation having zero Gaussian curvature at some point. Interestingly, for $0< \sigma\leq 1$, the decay rate recovers the same rate as in the case $\sigma = 2$. 
\end{remark}

%
%
%
%
%
%
%
%
%
 \subsection{Difficulties and strategy of the proof}
The primary challenge in analyzing the Euler--Riesz system arises from the Riesz interaction force, which causes the linearized equation to differ from the Klein--Gordon flow. By differentiating the Euler--Riesz system \eqref{ER_1} with respect to time, we obtain
\bq\label{L_equation} 
\pa_{tt} w -\Delta w +|\nabla|^{2-\sigma} w = F(w),
\eq 
where $w=(n, u)^T$ and $F$ is a vector-valued nonlinear operator. This formulation is due to the curl-free condition.  The dispersive characteristics of this equation are given by
\bq\label{dispersion}
p(r)=(r^2+r^{2-\sigma})^{1/2}.
\eq
It is observed that $p(r)$ is non-degenerate when $\sigma=2$, corresponding to the Klein--Gordon flow with a decay rate of $(1+t)^{-3/2}$. In contrast, when $\sigma=0$, the flow is degenerate, leading to a vanishing Gaussian curvature and a slower decay rate of $(1+t)^{-1}$. However, the case for $0<\sigma<2$ remains largely unexplored, since the geometric structure of $p(r)$ is uniquely determined by $\sigma$. In addition, handling the singularity in the nonlinearity of \eqref{L_equation} is challenging, especially when using the space-time resonance method. In fact, one of the nonlinearities can be viewed as a bilinear operator with a kernel of a fraction form whose denominator is 
\[
\Phi=p(|\xi|)-p(|\xi-\eta|)-p(|\eta|).
\]
The value of $\Phi$ can be zero over a significant region, and there is no null structure to address this singularity. To deal with these challenges, we need to explore the strict asymptotic behaviors of $p(r)$, $p'(r)$, and $p''(r)$ across low frequencies ($r < 1$) and high frequencies ($r \ge 1$), taking into account the value of $\sigma$. Another difficulty is the loss of regularity in the energy estimates. Using standard methods, we encounter terms like $\|\nabla \Lambda^{-\sigma} n\|_{\dot H^{2s}}$, which involves a regularity order of $2s + 1 - \sigma$. It is not feasible to bound this term using the $X$-norm of $n$ for $0 < \sigma < 1$. To overcome this, we need to develop weighted energy estimates that are comparable to classical ones, under a suitable smallness condition on the solution.
%
%
%
%
%
%
%
%
%
\subsubsection*{\bf Strategy of the proof}

We outline the proof of Theorem \ref{global} and describe some of the strategies involved. Consider the function:
 \[
 \alpha(t, x)=\frac{p(|\nabla|)}{|\nabla|} n(t, x) +i \frac{\nabla }{|\nabla|}\cdot u(t, x),
 \]
where $p$ is given as in \eqref{dispersion}. 
 From \eqref{ER_1}, it follows that
 \bq\label{ER_22}
 \begin{aligned}
 	(\pa_t -ip(|\nabla|)) \alpha =  \mathcal{Q}(\alpha),
 \end{aligned}
 \eq
where $\mathcal{Q}(\alpha)$ has quadratic forms:
\[
\begin{aligned}
	\mathcal{Q}(\alpha)
	& = \sum_{r, l=1}^2 \iint_{\R^3 \times \R^3} e^{i x\cdot \xi} m_{r, l}(\xi, \eta) \widehat{\alpha_r}(\xi-\eta) \widehat{\alpha_l} (\eta) \,d\xi d \eta
\end{aligned}
\]
with
\[
\begin{aligned}
	m_{r, l}(\xi, \eta) 
	&= c_1 p(\xi) r(\xi)r(\xi-\eta) r(\eta)+c_2 |\xi| r(\xi-\eta) r(\eta).
\end{aligned}
\]
Here, $p(\xi)=p(|\xi|)$ is defined as \eqref{dispersion}, and the map $r(x)$ is used for $x\mapsto \frac{x}{|x|}$ or $x\mapsto \frac{|x|}{p(x)}$, which obeys
\[
\| r(\nabla) f\|_{L^p} =\| \mathcal{F}^{-1} \{r \hat{f}\}\|_{L^p} \lesssim \|f\|_{L^p} \qquad 1<p<\infty,
\]
and thus it is almost negligible when we estimate it in $L^p$ spaces. By the Duhamel principle, we have
\bq\label{Duhamel}
\alpha(t, x)= e^{itp(|\nabla|)}\alpha(0, x) +  \int_0^t e^{i(t-\tau)p(|\nabla|)}\mathcal{Q}(\alpha)(\tau, x) \,d\tau.
\eq

The main ingredient in establishing energy estimates is $L^p$ decay estimates.
We need to study the decay estimate of the linear solution to \eqref{ER_22} where $\mathcal{Q} \equiv 0$. 
Based on the bootstrapping argument, we deduce a decay rate of the solution to the nonlinear problem \eqref{ER_22} from that of the linear solution in the $L^p$ sense.
For this, the decay rate is required to be integrable over $\mathbb{R}_{+}$, and the nonlinearity $\mathcal{Q}(\alpha)$ could be controlled by terms $\|\alpha\|_X^2$ of at least order $2$. However, due to the lack of regularity for the solution, it is not straightforward to estimate the nonlinear term.\newline

\textbf{Step 1} (Shatah's normal transformation).
 We apply Shatah's normal transformation (performing an integration by parts with respect to $t$ appropriately) to the Duhamel term. Then it can be expressed as
\bq\label{Duhamel2}
 \text{Duhamel term} =  g(t) -e^{itp(|\nabla|)}g(0) +h(t),
\eq
where $g$ has quadratic forms of $\alpha$ (see \eqref{bilinear} below), and $h$ is a sum of cubic forms $h_{r, l}$ for $r, l=1,2$ defined by
\bq\label{H}
\begin{aligned}
	h_{r, l}(t, x)&= -2i \int_{0}^t   \iint_{\R^3\times \R^3}  e^{ix\cdot \xi} e^{i(t-\tau) p(|\nabla|)} \frac{m_{r, l}(\xi, \eta)}{\Phi_{r, l}(\xi, \eta)} \widehat{ \alpha_r}(\tau, \xi-\eta) \widehat{\mathcal{Q}} (\tau, \eta) \, d \eta d\xi  d \tau
\end{aligned}
\eq
with
\[
\Phi_{r, l}(\xi, \eta)=p(\xi)+(-1)^rp(\xi-\eta)+(-1)^lp(\eta).
\]
Thanks to the symmetry, the argument for $(r, l)=(1,1)$ can cover the other cases as well.
It is observed that the phase $\Phi_{1, 1}$ can be zero on the significant set of 
\[
|\eta|^{\frac{2-\sigma}{2}}|\xi-\eta|^{\frac{2-\sigma}{2}}=0
\]
in a stark contrast to the case $\sigma=2$ (see Lemma \ref{Lemma_low_Phi}). This makes estimating the nonlinear term $h$ more challenging in {\bf Step 3}. \newline

\textbf{Step 2} (Dispersive estimates for the linear system). We analyze the decay estimates for the linear flow generated by the operator $e^{it p(|\nabla|)}$. Precisely, our goal is to establish:
\bq\label{Decay_0}
\|e^{it p(|\nabla|)} f\|_{L^{p}} \lesssim (1+t)^{-\beta_{\sigma}} \|f\|_{W^{6(\frac{1}{2}-\frac{1}{p}), p'}}
\eq	
for $2\leq p < \infty$ with $\beta_{\sigma}$ given as in \eqref{beta}. 
As mentioned above, the decay rate of the evolution operator $e^{itp(|\nabla|)}$ heavily relies on the geometric structure of the dispersive feature $p(r)=(r^2+r^{2-\sigma})^{1/2}$, which varies significantly with the parameter $\sigma$. 

When $0 < \sigma \leq 1$, the dispersion relation $p(r)$ is non-degenerate, leading to a decay rate comparable to that of the Klein--Gordon equation. In this regime, the non-degenerate nature of $p(r)$ ensures robust dispersive effects, allowing us to derive relatively strong decay estimates for the linear flow. However, for $1 < \sigma < 2$, the situation becomes more complex due to the presence of a degeneracy point in $p(r)$. This degeneracy results in weak dispersive effects, causing the decay rate to be slower than in the non-degenerate case. Complicating matters further, the dispersion $p(r)$ is a non-homogeneous type, making decay estimates less straightforward since $p(r)$ exhibits different behaviors at low and high frequencies.

To address these challenges, inspired by \cite{GP} and \cite{GPW}, we separate the analysis into low-and high-frequency components. Using the distinct scaling behaviors of $p(r)$ for each component, we employ a different approach on $\sigma$, because $p(r)$ for $\sigma=0$ behaves similarly to the linear wave equation, while the behavior of $p(r)$ for $\sigma=2$ resembles that of the Klein--Gordon equation. In particular, for $1<\sigma<2$, we obtain a decay rate better than that of the wave flow by isolating the degeneracy point and employing the stationary phase method (see Lemma \ref{dispersive_2}). This careful analysis is essential for understanding the dispersive properties of the system and for establishing the decay estimates necessary for the subsequent steps in the proof.\newline 

\textbf{Step 3} (Control of the singularity of the kernel). We establish the following $L^p$ decay estimates: 
\bq\label{LP decay}
\|\alpha(t)\|_{W^{s, p}}\lesssim (1+t)^{-\beta_{\sigma}} \lt(\|\alpha(0)\|_{Y}  +  \sup_{0<t<T^{\ast}}\|\alpha(t)\|_{X}^2 +\sup_{0<t<T^{\ast}}\|\alpha(t)\|_{X}^3 \rt).
\eq
By applying the decay estimate \eqref{Decay_0} to the Duhamel principle expression \eqref{Duhamel}, we focus on estimating the terms in \eqref{Duhamel2}. For clarity, we specifically consider the cubic term $h$. That is,
\[
\|h\|_{W^{s, p}} \lesssim  (1+t)^{-\beta_{\sigma}}\sup_{0<t<T^{\ast}}\|\alpha(t)\|_{X}^3.
\]
To achieve this, we employ some bilinear estimates to get $\|\alpha\|_X^3$. However,
the kernel $\frac{m_{r,l}}{\Phi_{r,l}}$ of the bilinear operator for \eqref{H} is not locally bounded due to the singularity of $\frac{1}{\Phi_{r,l}}$ as mentioned before. 
To overcome this challenge, we introduce the following modified kernel:
\[
\mathfrak{M}_{r, l}=\frac{|\xi|^{\frac{2-\sigma}{2}}|\eta|^{\frac{2-\sigma}{2}}|\xi-\eta|^{\frac{2-\sigma}{2}}}{\lal \xi-\eta\ral^{2 \lambda} \lal \eta\ral^{2 \lambda}\Phi_{r, l}},
\]
where $\lambda>0$ is large enough and $\lal \cdot \ral$ denotes the Japanese bracket defined as $\lal x\ral =(1+x^2)^{1/2}$. 

In this step, it is necessary to employ the negative Sobolev norm $\dot{H}^{-(2-\sigma)/2}$ of the solution to incorporate $\mathfrak{M}_{r, l}$ as the kernel of the bilinear operator. We then employ bilinear estimates developed by Gusafson--Nakanishi--Tsai \cite{GNT07}, where the kernel is shown to belong to the spaces $L_{\xi}^{\infty} \dot{H}_{\eta}^k$ and $L_{\eta}^{\infty} \dot{H}_{\xi}^k$. A significant portion of the analysis is dedicated to proving that $\mathfrak{M}_{r, l} \in L_{\xi}^{\infty} \dot{H}_{\eta}^k \cap L_{\eta}^{\infty} \dot{H}_{\xi}^k$, with the regularity index $k$ required to be greater than $1$ in order to derive the $L^p$ decay estimates. The proof requires a deep understanding of the interplay between the angles and magnitudes of $\xi$, $\xi-\eta$, and $\eta$, based on the asymptotic behavior of the relations $p, p'$ and $p''$. Unfortunately, these asymptotic behaviors are not only affected by the value of $\sigma$ but also by whether $\xi$ lies in the low-frequency region $|\xi|<1$ or the high-frequency region $|\xi|\ge 1$. To address these complexities, we divide the analysis into three cases, performing a detailed examination of each scenario by considering the possible configurations of frequency components in both low- and high-frequency regimes. \newline

\textbf{Step 4} (Time weighted energy estimates). 
In this step, we aim to have the a priori estimates for the solution $\alpha$ in the Sobolev spaces $H^{2s}$.
The required $\dot{H}^{-(2-\sigma)/2}$ estimates for \eqref{LP decay} can be derived since the kernel $m$ in $\mathcal{Q}$ has a gain in $p(\xi)$ or $|\xi|$, which adequately addresses the negative order of the Sobolev index. As a result, we obtain the following bound.
\[
\|\alpha(t)\|_{\dot{H}_x^{-\frac{2-\sigma}{2}}} \lesssim \|\alpha(0)\|_Y +\sup_{0<t<T^{\ast}}\|\alpha(t)\|_{X}^2.
\]
Utilizing the decay estimate \eqref{LP decay}, we can further develop the $H^{2s}$ estimates as follows:
\[
\|\alpha(t)\|_{H^{2s}}
\lesssim \|\alpha(0)\|_{Y} +\sup_{0<t<T^{\ast}}\|\alpha(t)\|_{X}^{3/2}.
\]
This estimate is derived via the application of Gr\"onwall inequality. Specifically, by using the identities
\[
{\rm Re}(\alpha) =\frac{p(|\nabla|)}{|\nabla|}n \quad \text{and} \quad {\rm Im}(\alpha)=\frac{\nabla }{|\nabla|}\cdot u,
\]
we can express the energy functional $\|\alpha\|_{\dot{H}^{2s}}$ for the higher order term as
\bq\label{EF1}
\|(n, u)(t)\|_{\dot{H}^{2s}}^2 +\| n(t)\|_{\dot{H}^{2s-\frac{\sigma}{2}}}^2. 
\eq
In a rather standard way, the following term associated with the Riesz interaction force appears 
\bq\label{Re}
\int_{\R^3}  \nabla |\nabla|^{2s-\sigma}n \cdot \nabla \lt(|\nabla|^{2s-2}\nabla \cdot(nu) \rt) dx,
\eq
where the regularity order for $n$ is approximately $1+2s-\sigma$. This term is manageable when $1 \leq \sigma < 2$, but for $0<\sigma<1$, the Riesz potential introduces a challenging loss of regularity.

To address this challenge, we employ the modified Sobolev space introduced in \cite{CJ23}:
\[
\|f\|_{\dot{\mathcal{H}}^s}:= \lt\| (n+1)^{-\frac{1}{2}}|\nabla|^{s} f\rt\|_{L^2}\qquad s>0,
\]
which satisfies $\|f\|_{\dot{\mathcal{H}}^s} \sim \|f\|_{\dot{H}^s}$ under the condition that $\sup_{0<t<T^{\ast}}\|n(t)\|_{L^{\infty}}<1/2$. It is noteworthy that Theorem \ref{global} does not require any additional assumptions since it inherently ensures the smallness of the global solution. Consequently, we adopt the energy function
\[
\|(n, u)(t)\|_{\dot{H}^{2s}}^2 + \| n(t) \|_{\dot{\mathcal{H}}^{2s-\frac{\sigma}{2}}}^2  
\]
in place of \eqref{EF1}, thereby eliminating the problematic term \eqref{Re}.  With strategic cancellations, we successfully close the higher-order estimates.

%
%
%
%
%
%
%
%
%
 \subsection{Notation and preliminaries}
 Throughout this paper, the symbols $C$ and $c_i$ for any $i\ge 1$ will represent positive universal constants, which may vary from one occurrence to another. We write $A \lesssim B$ to indicate that there exists a constant $C$, independent of time $T$, such that $A \leq CB$, and $A \sim B$ if there exists $C>1$ such that $A/C \leq B \leq CA$.  The phase function $p(r)$ is assumed to be radial, and sometimes we abuse the notation $p(r)=p(|r|)$ for convenience. Additionally, $\hat{f}$ and $\mathcal{F}(f)$ stand for the Fourier transform of $f$, and $\mathcal{F}^{-1}$ represents the inverse Fourier transform. 
 
The Sobolev space $W^{s, p}$ for $1\leq p\leq \infty$ consists of functions $f\in L^{p}$ such that 
\[
\|f\|_{W^{s, p}}:= \sum_{|k|\leq s} \| \pa^k f \|_{L^{p}} <\infty.
\]
In particular for $1<p<\infty$, an equivalent norm is given by
\[
\|f\|_{W^{s, p}}\sim \sum_{\alpha\leq s} \| |\nabla|^\alpha f \|_{L^{p}}
\]
as detailed in \cite[Appendix~A]{T06}.

The following lemma provides a useful tool that will be frequently used in this paper:
\begin{lemma}
	The following inequalities hold;
	\begin{enumerate}
		\item[$(i)$] 
		Let $s>0$, $ p \in (1, \infty)$ 
		If $f\in W^{s, p} $ and $g\in W^{s, p}$,
		then we have
		\bq\label{KP_ineq}
		\| fg\|_{\dot{W}^{s, p}} \lesssim  \| f\|_{\dot{W}^{s, p}}\|g\|_{L^{\infty}} + \|f\|_{L^{\infty}}\| g\|_{\dot{W}^{s, p}},
		\eq
		and moreover
\[
		\| fg\|_{W^{s, p}} \lesssim  \| f\|_{W^{s, p}}\|g\|_{L^{\infty}} + \|f\|_{L^{\infty}}\| g\|_{W^{s, p}}.
\]
		\item[$(ii)$] 
		Let $s>0$. If $f\in \dot{W}^{1, \infty} \cap \dot{H}^s$ and $g\in L^{\infty} \cap \dot{H}^{s-1}$, then 
		\bq\label{tech_1}
		\|[f, |\nabla|^s]g\|_{L^2} \lesssim \|f \|_{\dot{H}^s}\|g\|_{L^\infty} + \|\nabla f\|_{L^\infty}\|g\|_{\dot{H}^{s-1}}.
		\eq
	\end{enumerate}
\end{lemma}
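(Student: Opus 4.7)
The plan for both inequalities is to reduce everything to frequency-localized pieces via Bony's paraproduct decomposition, and then close the estimates with Bernstein's inequality, $L^p$-boundedness of Mikhlin multipliers ($1<p<\infty$), and the Littlewood--Paley square function characterization of $W^{s,p}$ (which is the norm equivalence already cited from Taylor's appendix). Let $\Delta_j$ denote the Littlewood--Paley projection at frequency $2^j$ and $S_j=\sum_{k\le j-2}\Delta_k$ the associated low-frequency cut-off, and write
$$fg = T_f g + T_g f + R(f,g),\qquad T_f g = \sum_j S_j f\cdot \Delta_j g,\qquad R(f,g) = \sum_{|j-k|\le 2}\Delta_j f\cdot \Delta_k g.$$

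For part $(i)$, each paraproduct $T_f g$ and $T_g f$ has Fourier support in an annulus of radius $\sim 2^j$, so $|\nabla|^s$ acting on the $j$-th block is, up to a Mikhlin multiplier uniformly bounded on $L^p$, the same as applying $|\nabla|^s$ to the high-frequency factor. Placing the low-frequency factor in $L^\infty$ pointwise and then taking the $\ell_j^2$ square function yields $\|T_f g\|_{\dot W^{s,p}}\lesssim \|f\|_{L^\infty}\|g\|_{\dot W^{s,p}}$ and symmetrically for $T_g f$. The remainder $R(f,g)$ is trickier because the output may sit at frequency much smaller than the inputs, but after distributing $|\nabla|^s$ onto either factor and using Bernstein to convert the low-frequency output-sum into a summable $\ell^2$ quantity, one again obtains both sides of the Leibniz bound. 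The inhomogeneous version follows by adding the very-low-frequency contribution $S_0(fg)$, which is controlled by H\"older.

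For part $(ii)$, apply the same paraproduct decomposition to both factors of the commutator:
$$[f,|\nabla|^s]g = [T_f,|\nabla|^s]g + \bigl(T_{|\nabla|^s g}f - |\nabla|^s T_g f\bigr) + \bigl(R(f,|\nabla|^s g) - |\nabla|^s R(f,g)\bigr).$$
The second and third groupings are not genuine commutators: each of their four pieces is a paraproduct or remainder to which part $(i)$ (with $p=2$) applies directly, giving either $\|f\|_{\dot H^s}\|g\|_{L^\infty}$ or $\|\nabla f\|_{L^\infty}\|g\|_{\dot H^{s-1}}$ depending on which factor absorbs the derivatives. The genuine commutator term $[T_f,|\nabla|^s]g$ is the Kato--Ponce piece: on the Fourier support of the $j$-th summand one has $|\xi-\eta|\ll|\eta|\sim|\xi|$, so a first-order Taylor expansion gives $|\xi|^s-|\eta|^s = O(|\xi-\eta|\cdot|\eta|^{s-1})$. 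After dyadic rescaling, the resulting bilinear symbol lies uniformly in a Coifman--Meyer class, converting one factor into $\nabla f$ measured in $L^\infty$ and the other into $|\nabla|^{s-1}g$ measured in $\ell_j^2(L^2)=\dot H^{s-1}$.

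The main obstacle is the commutator term $[T_f,|\nabla|^s]g$: one must rigorously exploit the frequency gap $|\xi-\eta|\ll|\eta|$ to trade the $|\nabla|^s$ for a single $\nabla$ on $f$, which requires verifying that the rescaled symbol satisfies uniform Mikhlin-type bounds so that Coifman--Meyer applies on each dyadic block and the $\ell_j^2$ summation closes. Once this classical step is established, the remainder $R$-commutator and the inhomogeneous version of $(i)$ are routine, and both displayed estimates follow.
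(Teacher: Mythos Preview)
The paper does not actually prove this lemma; it is stated in the preliminaries (Section~1.3) as a standard tool without proof or even a direct citation, to be used freely throughout. So there is no ``paper's own proof'' to compare against.

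That said, your proposal is the standard and correct argument. Part~(i) is the fractional Leibniz rule of Kato--Ponce type, and your paraproduct treatment (placing the low-frequency factor in $L^\infty$, absorbing $|\nabla|^s$ into the high-frequency factor via a Mikhlin multiplier, and handling the remainder with Bernstein) is exactly how it is proved in the literature. Part~(ii) is the Kato--Ponce commutator estimate, and your decomposition of $[f,|\nabla|^s]g$ into the paracommutator $[T_f,|\nabla|^s]g$ plus the remaining paraproduct and remainder differences is correct; the mean-value/Coifman--Meyer step you identify for the paracommutator, exploiting $|\xi-\eta|\ll|\eta|$ to write $|\xi|^s-|\eta|^s=O(|\xi-\eta|\,|\eta|^{s-1})$, is precisely the classical Kato--Ponce mechanism. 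The other four pieces are indeed handled by the paraproduct estimates from part~(i), using Bernstein to trade $S_j(|\nabla|^s g)$ for $2^{js}\|g\|_{L^\infty}$ when needed. Your identification of the paracommutator as the only nontrivial step is accurate, and there is no gap.
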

 
We define the Littlewood--Paley operator $P_N$ by
\[
\mathcal{F}(P_ Nf) = \psi(\xi/N)\hat f-\psi(2\xi/N) \hat f=:\varphi_N(\xi) \hat{f}
\]
for any dyadic number $N>0$.
Here, $\psi(x) : \mathbb{R}^3 \rightarrow [0,1]$ is a smooth radial function such that $supp(\psi) \subseteq \{x\,:\, |x|\leq 2\}$ and $\psi(x)=1$ if $|x|\leq 1$. Additionally, we denote $P_{<N}:= \sum_{k<N} P_k$ and $ P_{\ge N}:=1-P_{<N}$. 

The Littlewood--Paley operator $P_N$ satisfies the well-known Bernstein inequality:
\[
\||\nabla|^{\pm s} P_N f\|_{L^p} \lesssim N^{\pm s}\|P_N f\|_{L^p}
\]
for all $s$ and $1\leq p\leq \infty$. The Besov space $B_{p,q}^s$ consists of functions $f\in \mathcal{S}$ such that
\[
\|f\|_{B_{p, q}^s} := \lt( \|P_{< 1} f\|_{L^p}^q +\sum_{N\ge 1}^{\infty}  \| N^s P_N f\|_{L^p}^q\rt)^{1/q} <\infty.
\]  
%
%
%
%
%
%
%
%
%
 \subsection{Organization of the paper}
The remainder of this paper is organized as follows. In Section \ref{sec_2}, we introduce Shatah's normal form transformation, which is employed to facilitate the derivation of the $L^p$ decay estimates for the solution. Section \ref{sec_3} is devoted to establishing the decay estimates for the solution. This begins with an analysis of dispersive estimates for the linear solution (Proposition \ref{decay}), followed by the derivation of $L^p$ decay estimates for the solution to the nonlinear problem. A key step in obtaining these $L^p$ estimates involves the application of bilinear estimates in which Proposition \ref{Proposition_m} plays a crucial role. In Section  \ref{sec_4}, we focus on proving Proposition \ref{Proposition_m}. Additionally, this section addresses the derivation of $\dot{H}^{-(2-\sigma)/2}$ estimates, which are necessary for the $L^p$ decay estimates. Finally, in Section \ref{sec_5}, we provide the detailed proof of Theorem \ref{global} by establishing estimates for the negative Sobolev norm, leading to the successful derivation of the $H^{2s}$, supported by the $L^p$ decay estimates.

%
%
%
%
%
%
%
%
%

\section{Normal transformation}\label{sec_2}

To facilitate the analysis of our main system, we introduce a transformation that combines the original variables $n(t,x)$ and $u(t,x)$ into a single complex-valued function $\alpha(t,x)$. This transformation is designed to simplify the structure of the equations and to make the nonlinear interactions more tractable. By expressing the system in terms of $\alpha(t,x)$, we can more effectively apply analytical techniques such as Fourier analysis and the Duhamel principle.

The transformation is defined as follows:
\bq\label{sol}
\alpha(t, x)=\frac{p(|\nabla|)}{|\nabla|} n(t, x) +i \frac{\nabla }{|\nabla|}\cdot u(t, x),
\eq
where $p(|\nabla|)=(|\nabla|^2 +|\nabla|^{2-\sigma})^{1/2}$. Using this formulation, we can express the original variables $n$ and $u$ in terms of $\alpha$ and its complex conjugate $\bar\alpha$:
\bq\label{nu_alpha}
n=\frac{|\nabla|}{p(|\nabla|)}\lt( \frac{\alpha+\bar{\alpha}}{2}\rt)\quad \text{and} \quad u=\frac{i\nabla}{|\nabla|} \lt( \frac{\alpha-\bar{\alpha}}{2}\rt).
\eq
Substituting these into the system \eqref{ER_1} transforms it into
\[
\begin{aligned}
	(\pa_t -ip(|\nabla|)) \alpha 
	& =-\frac{i}{4} \frac{p(|\nabla|)}{|\nabla|}\nabla \cdot \lt( \frac{|\nabla|}{p(|\nabla|)}(\alpha+\bar{\alpha}) \frac{\nabla }{|\nabla|}(\alpha-\bar{\alpha})\rt)\\
	&\quad  +\frac{i}{8} |\nabla| \lt( \frac{|\nabla|}{p(|\nabla|)} (\alpha +\bar{\alpha})\rt)^2-\frac{i}{8} |\nabla|\lt| \frac{\nabla}{|\nabla|} (\alpha -\bar{\alpha})\rt|^2\\
	& =:  \mathcal{Q}(\alpha).
\end{aligned}
\]
Applying the Fourier transform, we obtain
\[
\begin{aligned}
	\mathcal{Q}(\alpha)
	& =  i\sum_{r,l=1}^2 \iint_{\R^3\times \R^3} e^{i x\cdot \xi} \lt( \frac{(-1)^{l}m_1}{4} +\frac{m_2}{8} -\frac{(-1)^{l+r}m_3}{8} \rt)  \widehat{\alpha_r}(\xi-\eta) \widehat{\alpha_l} (\eta) \,d \xi d \eta
\end{aligned}
\]
where $\alpha_1=\alpha$, $\alpha_2 =\bar{\alpha}$, and
\[
\begin{aligned}
	m_1=m_1(\xi, \eta)&= p(\xi)\frac{|\xi-\eta|}{p(\xi-\eta)} \frac{\xi\cdot \eta}{|\xi||\eta|}, \cr
	m_2=m_2(\xi, \eta)&= |\xi|\frac{|\xi-\eta||\eta|}{p(\xi-\eta)p(\eta)}, \cr
	m_3=m_3(\xi, \eta)&= |\xi|\frac{(\xi-\eta)\cdot \eta}{|\xi-\eta||\eta|},
\end{aligned}
\]
with $p(\xi)=p(|\xi|)$. After symmetrizing between $\xi-\eta$ and $\eta$, we simplify this as
\[
\begin{aligned}
	\mathcal{Q}(\alpha)
	& = \sum_{r, l=1}^2 \iint_{\R^3\times \R^3} e^{i x\cdot \xi} m_{r, l}(\xi, \eta) \widehat{\alpha_r}(\xi-\eta) \widehat{\alpha_l} (\eta) \,d\xi d \eta,
\end{aligned}
\]
where 
\[
\begin{aligned}
m_{r, l}(\xi, \eta) 
&= c_1 p(\xi) \frac{|\xi-\eta|}{p(\xi-\eta)}\frac{\xi\cdot \eta}{|\xi||\eta|}
+c_2 p(\xi) \frac{|\eta|}{p(\eta)}\frac{\xi\cdot (\xi-\eta)}{|\xi||\xi-\eta|} +c_3 |\xi|\frac{(\xi-\eta)\cdot \eta}{|\xi-\eta||\eta|}
+c_4  |\xi|\frac{|\xi-\eta||\eta|}{p(\xi-\eta)p(\eta)}\\
&\sim p(\xi) r(\xi)r(\xi-\eta) r(\eta)+|\xi| r(\xi-\eta) r(\eta)
\end{aligned}
\]
which led to $m_{r, l}(\xi, \eta)=m_{r, l}(\xi, \xi-\eta)$. Here, a constant $c_j$ for $j=1,2,3, 4$ is determined by the value of $(r, l)$ as $\pm i/8$ or $0$,
but no matter what the value of the constant is.
$r(x)$ is denoted as functions $x\mapsto \frac{x}{|x|}$ or $x\mapsto \frac{|x|}{p(x)}$,
which obeys  
\bq\label{est_r}
\| r(\nabla) f\|_{L^p} =\| \mathcal{F}^{-1} \{r \hat{f}\}\|_{L^p} \lesssim \|f\|_{L^p}, \qquad 1<p<\infty.
\eq
By the Duhamel principle, we get
\bq\label{duhamel}
\alpha(t) = e^{itp(|\nabla|)} \alpha(0) +\int_0^t e^{i(t-\tau)p(|\nabla|)} \mathcal{Q}(\alpha)(\tau, x)  \,d\tau.
\eq
For the linear profile
\bq\label{b}
b(t, x)=e^{-itp(|\nabla|)} \alpha(t, x),
\eq
this implies 
\bq\label{h}
\widehat{b}(t, \xi) =\widehat{\alpha}(0, \xi) +\int_{0}^t e^{-i\tau p(\xi)} \widehat{\mathcal{Q}(\alpha)}(\tau, \xi) \,d\tau,
\eq
and thus
\bq\label{b2}
\pa_t \widehat{ b}(\xi)= e^{-it p(\xi)} \widehat{\mathcal{Q}(\alpha)}(t, \xi).
\eq
Then the Duhamel term in \eqref{h} can be further rewritten as  
\bq\label{duhamel_1}
\begin{aligned}
\int_0^t e^{-i\tau p(\xi)} \widehat{\mathcal{Q}(\alpha)}(\tau, \xi)\,d\tau
&= \sum_{r,l=1}^2 \int_0^t\int_{\R^3} e^{-i\tau p(\xi)} m_{r, l} \widehat{\alpha_r}(\xi-\eta) \widehat{\alpha_l} (\eta) \, d \eta d\tau\\
& =  \sum_{r,l=1}^2 \int_{0}^t \int_{\R^3}  e^{-i\tau \Phi_{r,l}} m_{r, l}\widehat{b_r}(\xi-\eta) \widehat{b_l} (\eta) \, d \eta d \tau,
\end{aligned}
\eq
where $\Phi_{r,l}=\Phi_{r,l}(\xi, \eta) := p(\xi)+(-1)^{r}p(\xi-\eta)+(-1)^{l}p(\eta)$.
Using the integral by parts, we have
\bq\label{duhamel_2}
\begin{aligned}
	 \int_{0}^t \int_{\R^3} e^{-i\tau \Phi_{r,l}} m_{r, l} \widehat{b_r}(\tau, \xi-\eta) \widehat{b_l} (\tau, \eta) \, d\eta d \tau
	&=  i\int_0^t\int_{\R^3} \pa_\tau \lt( e^{-i\tau \Phi_{r, l}} \rt) \frac{m_{r, l}}{\Phi_{r, l}} \widehat{b_r}(\tau, \xi-\eta) \widehat{b_l} (\tau, \eta)\, d\eta  d\tau \\
	&=   i\int_{\R^3} e^{-it \Phi_{r,l}} \frac{m_{r, l}}{\Phi_{r, l}} \widehat{b_r}(t, \xi-\eta) \widehat{b_l} (t, \eta) \, d\eta\\
	&\quad  - i\int_{\R^3}\frac{m_{r, l}}{\Phi_{r, l}} \widehat{b_r}(0, \xi-\eta) \widehat{b_l} (0, \eta) \, d\eta \\
	&\quad -2i\int_{0}^t\int_{\R^3}  e^{-i\tau \Phi_{r,l}} \frac{m_{r, l}}{\Phi_{r, l}} \widehat{ b_r}(\tau, \xi-\eta) \widehat{\pa_\tau b_l} (\tau, \eta)\, d\eta d \tau\\
\end{aligned}
\eq
with summation over $r, l=1,2$. Here we used for the last term above the symmetry property $m_{r, l}(\xi, \eta)=m_{r, l}(\xi, \xi-\eta)$ and $\Phi_{r, l}(\xi, \eta)=\Phi_{r,l}(\xi, \xi-\eta)$.
We combine \eqref{duhamel_1} and \eqref{duhamel_2} along with \eqref{b} and \eqref{b2} and then we obtain  
\[
\begin{aligned}
\int_0^t e^{-i\tau p(\xi)} \widehat{\mathcal{Q}(\alpha)}(\tau, \xi)\,d\tau & = e^{-it p(\xi)} \hat{g}(t, \xi)-\hat{g}(0, \xi) +e^{-it p(\xi)}\hat{h}(t, \xi),
\end{aligned}
\]
where
\bq\label{bilinear}
\begin{aligned}
	g (t, x) &=i \sum_{r,l=1}^2 \mathcal{F}^{-1} \lt( \int_{\R^3}  \frac{m_{r, l}}{\Phi_{r, l}} \widehat{\alpha_r}(t, \xi-\eta) \widehat{\alpha_l} (t, \eta) \, d \eta\rt),\\
	h(t, x)&= -2i  \sum_{r,l=1}^2 \mathcal{F}^{-1} \lt( \int_{0}^t \int_{\R^3}  e^{i(t-\tau) p(\xi)} \frac{m_{r, l}}{\Phi_{r, l}} \widehat{ \alpha_r}(\tau, \xi-\eta) \widehat{\mathcal{Q}} (\tau, \eta) \, d \eta d \tau\rt).
\end{aligned}
\eq
From \eqref{h}, hence it yields that 
\bq\label{sol_2}
\begin{aligned}
 \alpha(t, x)= e^{itp(|\nabla|)}\alpha(0, x) + g(t, x) -e^{itp(|\nabla|)}g(0, x) +h(t, x).
\end{aligned}
\eq

%
%
%
%
%
%
%
%
%
\section{$L^p$ decay estimates}\label{sec_3}
In this section, we present the $L^p$ decay estimate for $\alpha$ given as in \eqref{sol} within $W^{s,p}$ for some $s$ and $p$, which will be specified below. We first analyze the linear flow part, the first term on the right-hand side of \eqref{sol_2}. Subsequently, we estimate the remaining terms, constituting the other three terms.
%
%
%
%
%
%
%
%
%
\subsection{Linear flow}

We investigate time decay estimates for the linear flow $e^{itp(|\nabla|)}$ associated with the dispersion relation $p(r)=(r^2+r^{2-\sigma})^{1/2}$ for $r>0$ with $0<\sigma<2$. Precisely, in this subsection, our goal is to prove the following proposition.
\begin{proposition}\label{decay}
Let $0< \sigma<2$. The following estimate holds;
	\bq\label{decay_0}
	\|e^{it p(|\nabla|)} f\|_{L^{p}} \lesssim (1+t)^{-\beta_{\sigma}} \|f\|_{W^{6(\frac{1}{2}-\frac{1}{p}), p'}}
	\eq	
	for $2\leq p < \infty$, where $\beta_{\sigma}$ is defined as in \eqref{beta}. 
\end{proposition}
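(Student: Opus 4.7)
The proof would follow the standard $TT^*$/interpolation paradigm. Since $e^{itp(|\nabla|)}$ is unitary on $L^2$, by complex interpolation it suffices to establish the endpoint $L^1\to L^\infty$ estimate
\[
\|e^{itp(|\nabla|)}f\|_{L^\infty}\lesssim (1+t)^{-\gamma_\sigma}\|f\|_{W^{3,1}},
\]
with $\gamma_\sigma=3/2$ for $0<\sigma\leq 1$ and $\gamma_\sigma=4/3$ for $1<\sigma<2$. My plan is to work frequency by frequency via the Littlewood--Paley projectors $P_N$, aiming at the dyadic kernel bound $\|e^{itp(|\nabla|)}P_N f\|_{L^\infty}\lesssim (1+t)^{-\gamma_\sigma}N^3\|f\|_{L^1}$, and to recover \eqref{decay_0} by summing the dyadic contributions with Besov-type embeddings and interpolating against the $L^2$ isometry.

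Because the multiplier is radial, the three-dimensional Fourier-inversion formula for radial functions converts the kernel into a one-dimensional oscillatory integral,
\[
K_N(t,x)=\frac{C}{|x|}\,\mathrm{Im}\!\int_0^\infty r\,\varphi_N(r)\,e^{i(r|x|+tp(r))}\,dr.
\]
Splitting the sine into exponentials gives two pieces $I_N^\pm$ with phases $\Phi^\pm(r)=p(r)\pm r|x|/t$. Since $p'(r)>0$, the phase $\Phi^+$ admits no stationary point, and nonstationary integration by parts yields decay faster than any polynomial in $t$; all the work is concentrated on $I_N^-$, whose stationary points $r_\ast$ satisfy $p'(r_\ast)=|x|/t$.

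Before applying stationary phase I would catalogue the asymptotic behaviors of $p,p',p''$ on the two regimes $r<1$ and $r\geq 1$: at low frequencies $p(r)\sim r^{1-\sigma/2}$, $p'(r)\sim r^{-\sigma/2}$, $p''(r)\sim -r^{-1-\sigma/2}$; at high frequencies $p(r)=r+O(r^{1-\sigma})$, $p'(r)-1\sim \tfrac{1-\sigma}{2}r^{-\sigma}$ and $p''(r)=O(r^{-1-\sigma})$. The crucial structural dichotomy is that for $0<\sigma\leq 1$ one has $p''<0$ on the whole of $(0,\infty)$ (non-degenerate phase), whereas for $1<\sigma<2$ the sign of $p'(r)-1$ flips between the two regimes and $p''$ must vanish at an interior point $r_\ast>0$ with $p'''(r_\ast)\neq 0$; I would locate $r_\ast$ quantitatively and verify that it lies in the high-frequency zone.

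With these asymptotics in hand, I would carry out the van der Corput/stationary phase analysis shell by shell. For $0<\sigma\leq 1$, non-degenerate stationary phase contributes $t^{-1/2}$ from the one-dimensional integral, which combined with the $|x|^{-1}$ prefactor and balanced against the nonstationary zone $|x|\gtrsim t$ produces the Klein--Gordon-like rate $t^{-3/2}$. For $1<\sigma<2$, the central difficulty is the inflection at $r_\ast$: I would isolate it by a cutoff at scale $t^{-\rho}$, apply van der Corput with $k=3$ inside the cutoff (using $p'''(r_\ast)\neq 0$) and with $k=2$ outside, and optimize $\rho$ to obtain the improved rate $t^{-4/3}$, strictly between the wave rate $t^{-1}$ and the Klein--Gordon rate $t^{-3/2}$. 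Low frequencies, where $p(r)\sim r^{1-\sigma/2}$ is essentially homogeneous, are handled by a rescaling that reduces the problem to the oscillatory integral with wave-type symbol $r^{1-\sigma/2}$, to which the same stationary-phase arguments apply and yield at least as fast a decay. The main obstacle I anticipate is the simultaneous handling of the inflection of $p$ at $r_\ast$ for $\sigma>1$ and the non-homogeneous scaling of $p$ across the low- and high-frequency regions: this forces a careful optimization of the cutoff scale $\rho$ and separate analyses in each frequency zone, together with uniform control of the implicit constants as $\sigma$ varies within each subinterval.
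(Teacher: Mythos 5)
Your proposal follows essentially the same route as the paper: frequency localization, interpolation of an $L^1\to L^\infty$ dyadic kernel bound against the $L^2$ isometry, and a stationary-phase analysis whose key structural point is that $p''<0$ everywhere for $0<\sigma\leq 1$ (non-degenerate phase, Klein--Gordon rate $t^{-3/2}$) while $p''$ vanishes at a single interior point $r_0=\bigl(\tfrac{2-\sigma}{2(\sigma-1)}\bigr)^{1/\sigma}$ for $1<\sigma<2$, which you isolate by a cutoff and treat with a third-order van der Corput estimate to get $t^{-4/3}$. The paper does this by invoking the Guo--Peng--Wang theorem for the non-degenerate dyadic shells (giving losses $N^{5/2}$ for $0<\sigma\leq1$ and $N^{17/6}$ for $1<\sigma<2$, both below your cruder $N^3$, which would still be just enough) and a direct stationary-phase argument for the $Q_{r_0}$ piece.

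One claim in your outline is wrong and would have to be repaired before the argument closes: $r_0$ does \emph{not} always lie in the high-frequency zone. Since $r_0=1$ exactly at $\sigma=4/3$, one has $r_0\geq1$ only for $1<\sigma\leq 4/3$, while $r_0<1$ for $4/3<\sigma<2$. Consequently, your plan to treat low frequencies by rescaling to the ``essentially homogeneous'' symbol $r^{1-\sigma/2}$ breaks down for $4/3<\sigma<2$, because the inflection point and the attendant vanishing Gaussian curvature sit inside the low-frequency region in that range. The fix is the one the paper uses: isolate a fixed $\varepsilon$-neighborhood of $r_0$ by a smooth cutoff \emph{independently} of the low/high split, apply the $t^{-4/3}$ van der Corput bound there, and apply the non-degenerate stationary-phase/GPW bounds on the complement, where $|p''(r)|$ satisfies the power-law two-sided bounds recorded in the paper's Table \ref{table} (which indeed changes exponent across $\sigma=4/3$).
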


\begin{remark}
	The case $p=\infty$ can be derived by applying the Gagliardo--Nirenberg--Sobolev inequality to \eqref{note_5} with additional regularity for $f$. However, this is not relevant to our primary objectives and thus will not be considered further.
\end{remark}

The case $\sigma=0$ corresponds to a dispersion relation of homogenous type, where the decay rate of $e^{itp(|\nabla|)}$ is known to be $|t|^{-1}$. This is related to the rank of the Hessian matrix of the dispersion relation $p$. For $\sigma>0$, the dispersion relation $p(r)$ becomes non-homogeneous, introducing complexities in obtaining decay estimates due to its distinct behavior at low and high frequencies. Despite these complications, one might expect a decay rate of $|t|^{-3/2}$ provided that the surface defined by $p$ exhibits nonzero Gaussian curvature. Guo, Peng, and Wang \cite{GPW} studied such decay estimates by separating them into high and low-frequency parts with different scales as follows.  

\begin{theorem}[\cite{GPW}]\label{decay_GPW}
Assume that $p: \mathbb{R}_{+} \rightarrow \mathbb{R}$ is smooth away from the origin. 
If there exist constants $\alpha_1, \alpha_2 >0$ such that
\[
|p'(r)| \sim r^{\alpha_1-1},\qquad |p^{(k)} (r)| \lesssim r^{\alpha_1-k} \qquad \text{for }r\ge1, 
\]
\[
|p'(r)| \sim r^{\alpha_2-1},\qquad |p^{(k)} (r)| \lesssim r^{\alpha_2-k}\qquad \text{for }r<1,
\]
and, moreover, if there exist $m_1, m_2 \in \mathbb{R}$ such that
\bq\label{replacement}
|p''(r)| \sim r^{m_1-2} \text{ for } r \ge1 \quad \text{ and } \quad |p''(r)|\sim r^{m_2-2} \text{ for } r <1,
\eq 
then the following estimates hold;
\[
\begin{aligned}
\|e^{itp(|\nabla|)} P_N f\|_{L^{\infty}} \lesssim |t|^{-\frac{3}{2}} N^{(3-\frac{3\alpha_1}{2}-\frac{m_1-\alpha_1}{2})} \|f\|_{L^1} \qquad N\ge 1\\
\|e^{itp(|\nabla|)} P_N f\|_{L^{\infty}} \lesssim |t|^{-\frac{3}{2}} N^{(3-\frac{3\alpha_2}{2}-\frac{m_2-\alpha_2}{2})} \|f\|_{L^1} \qquad N<1
\end{aligned}
\]	
where $P_N$ is the Littlewood--Paley operator.
\end{theorem}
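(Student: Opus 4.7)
The plan is to reduce the claim to a pointwise bound on the convolution kernel $K_N(x,t):=\mathcal{F}^{-1}\bigl(e^{itp(|\xi|)}\varphi_N(\xi)\bigr)$, and then run a radial stationary phase argument whose decay is dictated by the asymptotic behaviour of $p',p''$ encoded in $\alpha_j,m_j$. Since $e^{itp(|\nabla|)}P_N f=K_N*f$, Young's inequality gives $\|e^{itp(|\nabla|)}P_N f\|_{L^{\infty}}\le\|K_N(\cdot,t)\|_{L^{\infty}}\|f\|_{L^1}$, so everything reduces to proving $\|K_N(\cdot,t)\|_{L^\infty}\lesssim |t|^{-3/2}N^{3-\alpha_j-m_j/2}$ with $j=1$ for $N\ge 1$ and $j=2$ for $N<1$; this exponent is exactly $3-\tfrac{3\alpha_j}{2}-\tfrac{m_j-\alpha_j}{2}$.

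First I would exploit the radial symmetry of $e^{itp(|\xi|)}\varphi_N(\xi)$ by performing the angular integration in $\mathbb R^3$, which produces the identity
\[
K_N(x,t)=\frac{4\pi}{|x|}\int_0^\infty r\sin(r|x|)\,e^{itp(r)}\varphi_N(r)\,dr
=\frac{2\pi N^2}{i|x|}\int_0^\infty s\,\varphi_1(s)\bigl[e^{i\psi_+(s)}-e^{i\psi_-(s)}\bigr]\,ds,
\]
after the dyadic rescaling $r=Ns$, where $\psi_\pm(s)=\pm N|x|s+t\,p(Ns)$. Because $p'>0$, the $+$-phase has $\psi_+'(s)=N|x|+tNp'(Ns)$ bounded below by $tNp'(N)$ on $\mathrm{supp}\,\varphi_1$, so repeated integration by parts using the operator $\partial_s/(i\psi_+')$ yields super-polynomial decay in $|x|+tp'(N)$ for that piece; a straightforward bookkeeping on the derivatives $|p^{(k)}(Ns)|\lesssim N^{\alpha_j-k}$ (which follow from the hypotheses) shows its contribution is absorbed by the bound we aim for.

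The main contribution comes from $\psi_-$ and requires a Van der Corput–type analysis. Here $\psi_-'(s)=-N|x|+tNp'(Ns)$ and $\psi_-''(s)=tN^2 p''(Ns)$, and on the support of $\varphi_1$ the hypothesis \eqref{replacement} gives $|\psi_-''|\sim tN^{m_j}$. I would split into two regimes according to the proximity of $|x|$ to the potential critical value $tp'(N)$. In the non-stationary regime $|\psi_-'|\gtrsim N|x|+tNp'(N)$ one integrates by parts as before and obtains $|x|^{-1}$-decay strong enough to dominate $t^{-3/2}N^{3-\alpha_j-m_j/2}$. In the resonant regime $|x|\sim tp'(N)\sim tN^{\alpha_j-1}$, Van der Corput's lemma (second derivative version, with the monotonicity needed to control the amplitude $s\varphi_1(s)$) gives
\[
\Bigl|\int_0^\infty s\,\varphi_1(s)\,e^{i\psi_-(s)}\,ds\Bigr|\lesssim |tN^{m_j}|^{-1/2}.
\]
Multiplying by the prefactor $N^2/|x|\sim N^{3-\alpha_j}/t$ produces exactly $t^{-3/2}N^{3-\alpha_j-m_j/2}$, which is the required exponent once one rewrites $3-\alpha_j-m_j/2=3-\tfrac{3\alpha_j}{2}-\tfrac{m_j-\alpha_j}{2}$.

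The main obstacle I anticipate is verifying the hypotheses of Van der Corput uniformly in $N$ and $|x|$: one needs $\psi_-''$ to be of a single sign with comparable size on $\mathrm{supp}\,\varphi_1$, together with bounded variation of the amplitude after dividing by $\psi_-'$ in the non-stationary regime. Both follow from the pointwise scalings $|p^{(k)}(Ns)|\lesssim N^{\alpha_j-k}$ and $|p''(Ns)|\sim N^{m_j-2}$ on the dyadic annulus $s\sim 1$, but they must be organised carefully so that the resulting constants are independent of the scale $N$ and of the spatial point $x$; the transitional regime $|x|\sim tN^{\alpha_j-1}$, where one must patch the stationary and non-stationary estimates, is where the care is needed. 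Once the kernel bound is in place, Young's inequality as above concludes the proof.
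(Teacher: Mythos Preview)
The paper does not prove this statement itself; it is quoted from \cite{GPW}. Your approach---reduce to a kernel bound via Young's inequality, perform the angular integration to obtain a one-dimensional oscillatory integral, rescale to the unit dyadic shell, then run non-stationary phase or second-order Van der Corput according to whether $|x|$ is far from or near the critical radius $\sim tp'(N)$---is exactly the method of \cite{GPW}, and the paper reproduces the same machinery in its proof of Lemma~\ref{dispersive_2} via the Bessel asymptotics $(r|x|)^{-1/2}J_{1/2}(r|x|)=c\,\mathrm{Re}\bigl(e^{ir|x|}Z(r|x|)\bigr)$ with $|\partial^kZ(s)|\lesssim(1+s)^{-1-k}$.

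One point to tighten: after you split $\sin(Ns|x|)=\tfrac{1}{2i}\bigl(e^{iNs|x|}-e^{-iNs|x|}\bigr)$ and factor out $N^2/|x|$, each of the pieces $\psi_\pm$ individually carries an honest $1/|x|$ singularity that non-stationary phase does \emph{not} remove in the regime $N|x|\ll 1$ (there $|\psi_\pm'|\sim tN^{\alpha_j}$ is independent of $|x|$, so integrating by parts gains no factor of $|x|$). The fix is routine: either keep the Bessel weight $Z(Ns|x|)$, which is uniformly bounded for small argument, as the paper does in Lemma~\ref{dispersive_2}; or simply refrain from splitting when $N|x|\lesssim 1$ and integrate by parts in the phase $tp(Ns)$ against the smooth bounded amplitude $s^2\varphi_1(s)\,\tfrac{\sin(Ns|x|)}{Ns|x|}$, which already yields the (stronger) bound $N^3(tN^{\alpha_j})^{-k}$ for any $k$.
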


For our dispersion relation $p(r)=(r^2+r^{2-\sigma})^{1/2}$ with $r>0$, we observe that 
\bq\label{deriv_P_first}
p'(r) =\frac{2r^{\sigma}+2-\sigma}{2 r^{\sigma/2} (r^{\sigma}+1)^{1/2}}
\eq 
where $p'(r)$ is positive for $r>0$, and
\[
p''(r)=\frac{2\sigma(\sigma-1)r^{\sigma}-\sigma(2-\sigma)}{4 r^{1+\sigma/2}(r^{\sigma}+1)^{3/2}}.
\]
This leads to the asymptotic behaviors:
\bq\label{behavior_P1}
|p'(r)| \sim 1,\quad |p^{(k)} (r)| \lesssim r^{1-k}, \qquad \text{when }r\ge1 \eq
and
\bq\label{behavior_P2}
|p'(r)| \sim r^{-\frac{\sigma}{2}},\quad |p^{(k)} (r)| \lesssim r^{\frac{2-\sigma}{2}-k},\qquad \text{when }r<1
\eq
for any integer $k \ge 2$ and $0<\sigma<2$.
In particular, for $0<\sigma\leq 1$, there is no degenerate point $r_0$ such that $p''(r_0)=0$. This implies $|p''(r)|>0$ for any $r>0$, allowing us to verify that   \eqref{replacement} holds with $(m_1, m_2)=(1-\sigma, 1-\sigma/2)$ for $0<\sigma<1$ and $(m_1, m_2)=(-1, 1/2)$ for $\sigma=1$, as summarized in Table \ref{table} below. Details are provided in Appendix \ref{appendix_3}.

\begin{table}
	\centering
	\begin{tabular}{ | c | c | c | c | c |  }
		\hline
		& $0<\sigma<1$ & $\sigma=1$ & $1<\sigma\leq \frac43$ & $\frac43<\sigma<2$\\
		\hline 
		$r\ge 1$ & $r^{-1-\sigma}$ & $r^{-3}$ & $r^{-1-2\sigma}$ for $r\neq r_0$& $r^{-1-\sigma}$ for $r\neq r_0$ \\ [1mm]
		\hline
		 $r<1$ & \multicolumn{4}{c|}{$r^{-1 - \frac\sigma2}$}  \\[1mm]
		\hline  
	\end{tabular}
	\caption{Behavior of $|p''(r)|$ for $0<\sigma<2$.}
	\label{table}
\end{table}
 
Thus, a direct application of Theorem \ref{decay_GPW} to a semi-group operator $e^{itp(|\nabla|)}$ implies the following decay estimates for $0< \sigma \leq 1$.  

\begin{lemma}\label{dispersive_1}
	Let $0<\sigma \leq 1$. Then the following estimates hold
	\begin{align*}
\|e^{it p(|\nabla|) } P_N f\|_{L^{\infty}} &\lesssim   |t|^{-\frac{3}{2}} N^{\frac{5}{2}} \|f\|_{L^1} \quad \qquad \text{for } N\ge1,\\
\|e^{it p(|\nabla|) } P_N f\|_{L^{\infty}} &\lesssim  |t|^{-\frac{3}{2}} N^{\frac{3}{2}+\frac{3\sigma}{4}}\|f\|_{L^1} \quad \text{ for } N< 1.
	\end{align*}
\end{lemma}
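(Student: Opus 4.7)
The plan is to invoke Theorem \ref{decay_GPW} directly, since the dispersion relation $p(r)=(r^2+r^{2-\sigma})^{1/2}$ satisfies all its hypotheses in the regime $0<\sigma\leq 1$ with parameters that can be read off from \eqref{behavior_P1}, \eqref{behavior_P2}, and the Table of $|p''(r)|$. First I would identify $\alpha_1=1$ from \eqref{behavior_P1} (since $|p'(r)|\sim 1$ for $r\geq 1$), and $\alpha_2=(2-\sigma)/2$ from \eqref{behavior_P2} (since $|p'(r)|\sim r^{-\sigma/2}$ for $r<1$). From the Table, in the high-frequency regime $r\geq 1$ one has $m_1=1-\sigma$ when $0<\sigma<1$ and $m_1=-1$ when $\sigma=1$, while in the low-frequency regime $r<1$ one has $m_2=1-\sigma/2$ uniformly in $\sigma$.

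Before applying the theorem I would confirm the nonvanishing condition \eqref{replacement}, i.e.\ that $p''$ has definite sign on each of $(0,1)$ and $[1,\infty)$ rather than merely decaying pointwise. Using the explicit form
\[
p''(r)=\frac{2\sigma(\sigma-1)r^{\sigma}-\sigma(2-\sigma)}{4r^{1+\sigma/2}(r^{\sigma}+1)^{3/2}},
\]
the numerator is strictly negative for every $r>0$ whenever $0<\sigma\leq 1$, because the first term is nonpositive in this range and the second is strictly negative. Thus no degenerate zero of $p''$ appears, which is precisely what separates the present regime from $1<\sigma<2$, where an isolated zero $r_0$ of $p''$ forces the more delicate treatment of Lemma \ref{dispersive_2}.

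Once \eqref{replacement} is in place, substitution into the exponent $3-3\alpha_j/2-(m_j-\alpha_j)/2$ from Theorem \ref{decay_GPW} is purely algebraic. In the low-frequency case, the identity $\alpha_2=m_2=1-\sigma/2$ gives $(m_2-\alpha_2)/2=0$ and the exponent reduces to $3/2+3\sigma/4$, matching the claimed $N^{3/2+3\sigma/4}$. In the high-frequency case with $\sigma=1$, a direct computation gives $3-3/2-(-1-1)/2=5/2$, whereas for $0<\sigma<1$ one obtains $3/2+\sigma/2\leq 5/2$; since $N\geq 1$, the sharper bound $N^{3/2+\sigma/2}$ can be absorbed into $N^{5/2}$, yielding a single uniform estimate across $0<\sigma\leq 1$.

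Since the whole argument reduces to invoking a cited theorem with parameters computed explicitly, no serious obstacle arises. The only care points are the sign analysis of the numerator of $p''$ to guarantee \eqref{replacement}, and the cosmetic choice of trading the sharper exponent $3/2+\sigma/2$ at $\sigma<1$ for the uniform $N^{5/2}$, which simplifies the statement without affecting later applications since only $N\geq 1$ is involved.
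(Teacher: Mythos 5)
Your proposal is correct and takes exactly the approach the paper intends: the paper gives no explicit proof but states that Lemma \ref{dispersive_1} is ``a direct application of Theorem \ref{decay_GPW},'' and you supply precisely the parameter identification $(\alpha_1,\alpha_2)=(1,1-\sigma/2)$ from \eqref{behavior_P1}--\eqref{behavior_P2}, the values $(m_1,m_2)$ from Table \ref{table}, and the sign check on $p''$ that justifies \eqref{replacement}. Your observation that the high-frequency exponent is actually $3/2+\sigma/2$ for $0<\sigma<1$ (discontinuously jumping to $5/2$ at $\sigma=1$ because the numerator of $p''$ becomes constant) and that the stated uniform $N^{5/2}$ bound follows by monotonicity since $N\ge 1$ correctly reconciles the sharper theorem output with the lemma as written.
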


For $1<\sigma < 2$, a degenerate point $r_0$ exists, given by
\bq\label{degerate pt}
r_0 := \lt(\frac{2-\sigma}{2(\sigma-1)}\rt)^{1/\sigma}>0
\eq 
such that $p''(r_0)=0$. We define a frequency-localized operator $Q_{r_0}$ around the degeneracy point $r_0$ as 
\[
Q_{r_0}f = \mathcal{F}^{-1} \{ \Psi_{r_0} \hat{f} \},
\]
where $\Psi_{r_0}:\mathbb{R} \rightarrow [0, 1]$ is a radial smooth function satisfying $\Psi_{r_0}(\xi)=1$ if $||\xi|-r_0|\leq \varepsilon$ and $\Psi_{r_0}(\xi)=0$ if $||\xi|-r_0| >2\varepsilon$.
Next, we consider the operator $1-Q_{r_0}$ whose frequency lies apart from the point $r_0$, defined as
$$(1-Q_{r_0}) f = \mathcal{F}^{-1} \{\Psi_{<r_0} \hat{f} +\Psi_{>r_0} \hat{f}\}, $$
where radial functions $\Psi_{<r_0}$ and $\Psi_{>r_0}$ satisfy
\[
\Psi_{<r_0}(\xi)=1 \quad \mbox{if $|\xi|<r_0-2\varepsilon$} \quad \mbox{and} \quad \Psi_{<r_0}(\xi)=0 \quad \mbox{if $|\xi|>r_0$},
\] 
and 
\[
\Psi_{>r_0}(\xi)=1 \quad \mbox{if $|\xi|>r_0+2\varepsilon$} \quad \mbox{and} \quad \Psi_{>r_0}(\xi)=0\quad \mbox{if $|\xi|<r_0$}
\]
such that
\[
\Psi_{<r_0}(\xi) +\Psi_{r_0}(\xi) +\Psi_{>r_0}(\xi)=1.
\]

The following lemma presents the dispersive estimates for frequencies localized around the degeneracy $r_0$ and for those apart from $r_0$.
\begin{lemma}\label{dispersive_2}
	Let $1<\sigma<2$. Then the following estimates hold:
	\begin{align}
		\label{degeneracy}
		\|e^{it p(|\nabla|) } Q_{r_0} f\|_{L^{\infty}} &\lesssim   |t|^{-\frac{4}{3}}  \|f\|_{L^1}, \\
		\label{high1}
		\|e^{itp(|\nabla|)} P_N (1-Q_{r_0}) f\|_{L^{\infty}}& \lesssim   |t|^{-\frac{3}{2}} N^{\frac{17}{6}} \|f\|_{L^1} \quad \quad \mbox{for } N\ge1, \\
		\label{low1}
		\|e^{it p(|\nabla|) } P_N (1-Q_{r_0})f\|_{L^{\infty}}& \lesssim  |t|^{-\frac{3}{2}} N^{\frac{3}{2}+\frac{3\sigma}{4}} \|f\|_{L^1} \quad \mbox{for }   N< 1,
	\end{align}
where $r_0$ is given as in \eqref{degerate pt}. 
\end{lemma}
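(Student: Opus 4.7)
The plan is to treat the three estimates in two groups. The non-degenerate bounds \eqref{high1} and \eqref{low1} follow directly from Theorem \ref{decay_GPW}: on the Fourier support of $P_N(1-Q_{r_0})$ we stay at distance $\gtrsim\varepsilon$ from $r_0$, so the asymptotic relations for $|p'|$ and $|p''|$ recorded in \eqref{behavior_P1}--\eqref{behavior_P2} and Table \ref{table} hold uniformly. Taking $\alpha_1=1$ for $r\ge 1$, $\alpha_2=\tfrac{2-\sigma}{2}$ for $r<1$, and the corresponding values of $m_1,m_2$ from Table \ref{table}, a short arithmetic check on the exponent $3-\tfrac{3\alpha_j}{2}-\tfrac{m_j-\alpha_j}{2}$ appearing in Theorem \ref{decay_GPW} produces $\tfrac32+\sigma$ or $\tfrac32+\tfrac\sigma2$ at high frequency (both bounded by the worst case $\tfrac{17}{6}$ attained at $\sigma=\tfrac43$ when $N\ge 1$), and $\tfrac32+\tfrac{3\sigma}{4}$ at low frequency, recovering \eqref{high1} and \eqref{low1}.

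The substance of the lemma is the degenerate estimate \eqref{degeneracy}. By Young's convolution inequality it suffices to show $\|K_{r_0}(t,\cdot)\|_{L^\infty}\lesssim|t|^{-4/3}$ for the kernel
\[
K_{r_0}(t,x)=(2\pi)^{-3}\int_{\R^3}\Psi_{r_0}(\xi)\,e^{i(x\cdot\xi+tp(|\xi|))}\,d\xi.
\]
Using radial symmetry and the identity $\int_{S^2}e^{irx\cdot\omega}\,d\omega=\tfrac{4\pi\sin(r|x|)}{r|x|}$, this reduces to a one-dimensional oscillatory integral
\[
K_{r_0}(t,x)=\frac{C}{|x|}\bigl[I_+(t,x)-I_-(t,x)\bigr],\qquad I_\pm(t,x)=\int_0^\infty r\,\Psi_{r_0}(r)\,e^{i(tp(r)\pm r|x|)}\,dr.
\]
The phase of $I_+$ has derivative bounded below by $tp'(r_0)>0$, so repeated integration by parts yields arbitrarily fast decay in $t$. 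The heart of the matter is $I_-$: a direct computation from the formula for $p''$ displayed after \eqref{deriv_P_first} gives $p'''(r_0)\ne 0$, so for $\varepsilon$ sufficiently small $|p'''(r)|\gtrsim 1$ uniformly on $\mathrm{supp}(\Psi_{r_0})$ and hence $|\partial_r^3(tp(r)-r|x|)|\gtrsim t$. Van der Corput's lemma for third derivatives then gives $|I_-(t,x)|\lesssim t^{-1/3}$ uniformly in $x$. In the stationary regime $|x|\approx tp'(r_0)\sim t$, combining this with the $|x|^{-1}$ prefactor produces $|K_{r_0}(t,x)|\lesssim t^{-4/3}$ directly.

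The delicate point will be confirming the same bound off the stationary regime, where either the $|x|^{-1}$ prefactor threatens to be singular or the van der Corput bound on $I_-$ is not sharp. For $|x|\gg t$, $|\partial_r(tp(r)-r|x|)|\gtrsim|x|$, so repeated integration by parts gives $|I_-|\lesssim|x|^{-N}$, hence $|K_{r_0}|\lesssim|x|^{-N-1}\ll t^{-4/3}$. For $|x|\lesssim 1$, I would instead rewrite the kernel with the smooth amplitude $\sin(r|x|)/|x|$, whose $k$-th derivative in $r$ is bounded by $|x|^{k-1}$, and integrate by parts against the phase $tp(r)$; each step trades the amplitude growth $|x|^{k-1}$ against a factor $t^{-k}$, yielding $|K_{r_0}|\lesssim|x|^{k-1}t^{-k}\lesssim t^{-4/3}$ for $k$ chosen sufficiently large. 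Assembling the stationary and non-stationary regimes yields the uniform bound $\|K_{r_0}(t,\cdot)\|_{L^\infty}\lesssim|t|^{-4/3}$, completing \eqref{degeneracy}.
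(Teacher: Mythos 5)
Your treatment of \eqref{high1} and \eqref{low1} is the paper's own: plug the asymptotics of $p', p''$ from Table~\ref{table} into Theorem~\ref{decay_GPW} on the support of $1-Q_{r_0}$, and the exponents come out. Your approach to \eqref{degeneracy} is also the same in substance as the paper's -- radial reduction via $J_{1/2}(s)=\sqrt{2/(\pi s)}\sin s$ (equivalently the paper's $Z(s)$ expansion), non-stationary phase for the $e^{+ir|x|}$ piece, Van der Corput for third derivatives on the $e^{-ir|x|}$ piece using $p''(r_0)=0$, $p'''(r_0)\neq 0$, and recovery of the extra $t^{-1}$ from the $|x|^{-1}$ decay of the amplitude. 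So the core ideas are right.

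There is, however, a gap in your case analysis for $K_{r_0}$. You cover three regimes: $|x|\approx tp'(r_0)\sim t$, where Van der Corput plus the $|x|^{-1}$ prefactor gives $t^{-4/3}$; $|x|\gg t$, where the phase of $I_-$ is non-stationary with derivative $\gtrsim|x|$; and $|x|\lesssim 1$, where you recombine $I_+-I_-$ to cancel the singular prefactor. This leaves the intermediate regime $1\lesssim|x|\lesssim ct$ (with $c$ a small multiple of $p'(r_0)$) unaddressed. There the prefactor $|x|^{-1}\lesssim 1$ is no help, the uniform Van der Corput bound $|I_-|\lesssim t^{-1/3}$ alone falls short of $t^{-4/3}$, and your two non-stationary arguments don't apply as stated (the $|x|\gg t$ argument needs $|tp'(r)-|x||\gtrsim|x|$, and the recombination trick is designed for $|x|\lesssim 1$). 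The fix is immediate -- in that regime $|tp'(r)-|x||\gtrsim t$, so each of $I_\pm$ decays like $t^{-N}$ by non-stationary phase and the $|x|^{-1}\lesssim 1$ prefactor is harmless -- but it needs to be said. The paper avoids the three-way split by cutting once at $|x|\lessgtr \tfrac18 p'(r_0)t$: below the threshold (which absorbs both your small-$|x|$ and intermediate regimes) the phase $tp(r)-r|x|$ is uniformly non-stationary with derivative $\gtrsim t$ and the amplitude $\bar Z((r+r_0)|x|)$ is bounded by a constant uniformly in $|x|$, so no recombination is needed; above the threshold Van der Corput plus $|\bar Z|\lesssim|x|^{-1}$ gives $t^{-4/3}$. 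Adopting that two-case split would close your gap and also let you drop the separate $|x|\gg t$ discussion, which the Van der Corput bound already covers.
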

\begin{proof} We begin by noting that for $r\neq r_0$, Table \ref{table} indicates that $p''(r)$ satisfies \eqref{replacement} with $(m_1, m_2)=(1-2\sigma, 1-\sigma/2)$ for $1<\sigma\leq 4/3$ and $(m_1, m_2)=(1-\sigma, 1-\sigma/2)$ for $4/3<\sigma<2$.
Applying Theorem \ref{decay_GPW} along with \eqref{behavior_P1} and \eqref{behavior_P2} provides the estimates for \eqref{high1} and \eqref{low1}. 

Here, we focus on proving \eqref{degeneracy}. By Young's inequality, we obtain
	\[
	\begin{aligned}
		\|e^{it p(|\nabla|) } Q_{r_0} f\|_{L^{\infty}} =
		\|\mathcal{F}^{-1} (e^{it p(|\xi|) } \Psi_{r_0})\ast f\|_{L^{\infty}} \lesssim \|\mathcal{F}^{-1} (e^{it p(|\xi|) } \Psi_{r_0} )\|_{L^{\infty}} \|f\|_{L^{1}}. 
	\end{aligned}
	\]
	Thus, it suffices to show that 
	\[
	\|\mathcal{F}^{-1} (e^{it p(|\xi|) } \Psi_{r_0} )\|_{L^{\infty}} \lesssim |t|^{-\frac{4}{3}}.
	\]
	Since $\Psi_{r_0}$ is radially symmetric, we can express its inverse Fourier transform as
	\[
	\mathcal{F}^{-1} (e^{itp(|\xi|)} \Psi_{r_0})(x)= 2\pi \int_{0}^{\infty} e^{itp(r)} \Psi_{r_0}(r) (r|x|)^{-\frac{1}{2}} J_{\frac{1}{2}} (r|x|) r^{2}\, dr,
	\]	
	where $J_{m}(s)$ is a Bessel function (see e.g. \cite{S71}). According to \cite[Chapter 1]{J}, we find
	\[
	s^{-m} J_{m}(s)=c Re(e^{is} Z(s)) =\frac{c}{2} (e^{is} Z(s)+e^{-is} \bar{Z}(s)),
	\]
	where $Z$ satisfies
	\bq\label{property_Z}
	|\pa^k Z(s)| \lesssim (1+s)^{-(1+k)}, \qquad k\ge0.
	\eq
	Then, we have
	\[
	\begin{aligned}
		\mathcal{F}^{-1}(e^{itp(|\xi|)} \Psi_{r_0}) (x)
		&= c \int_{0}^{\infty} e^{itp(r)} \Psi_{r_0}(r) r^{2} (e^{ir|x|}Z(r|x|)+e^{-ir|x|} \bar{Z}(r|x|)) \, dr\\
		&=  c \int_{0}^{2\varepsilon} e^{it\phi_{1}(r+r_0, |x|, t)}\tilde{\Psi}_{r_0}(r+r_0) Z((r+r_0)|x|)\, dr\\
		&\quad +c\int_{0}^{2\varepsilon} e^{it\phi_{2}(r+r_0, |x|, t)}\tilde{\Psi}_{r_0}(r+r_0) \bar{Z}((r+r_0)|x|)\, dr\\
		&=: I+II
	\end{aligned}
	\]
	by the change of variable $r \rightarrow r+r_0$. Here,  $\tilde{\Psi}_{r_0}(r):=\Psi_{r_0}(r)r^{2} $, and the
    phase functions $\phi_1$ and $\phi_2$ are defined by
	\[
	\phi_{1}(r, |x|, t) = p(r)+\frac{r|x|}{t} \quad \mbox{and} \quad \phi_{2}(r, |x|, t) = p(r)-\frac{r|x|}{t},
	\]
	respectively. For $I$, we observe
	\[
	\frac{d}{dr} \phi_1(r+r_0, |x|, t) =p'(r+r_0)+\frac{|x|}{t} \ge \frac{1}{4}p'(r_0)>0 \quad  \text{for } r\leq 2\varepsilon
	\]
	since  $p'(r+r_0) \ge 2^{-\sigma} p'(r_0)$ from \eqref{deriv_P_first} when $2\varepsilon<r_0$.
	Then, we use the non-stationary phase method to get
	\[
	|I| \lesssim t^{-\frac{3}{2}}.
	\]
	
	For $II$, we divide the range of $|x|$ into $|x| \leq \frac{1}{8}p'(r_0)t$ and $|x| > \frac{1}{8}p'(r_0)t$.
	When $|x| \leq \frac{1}{8}p'(r_0)t$, we get
	\[
	\frac{d}{dr} \phi_{2}(r+r_0, |x|, t)  =p'(r+r_0)-\frac{|x|}{t} \ge \frac{1}{8}p'(r_0)>0 \qquad \text{for } r\leq 2\varepsilon, 
	\]
	and then by the non-stationary phase method, we deduce 
	\[
	|II|\lesssim t^{-\frac{3}{2}}.
	\]
	
	In the case of $|x|>\frac{1}{8} p'(r_0) t$, there might be points where $\frac{d}{dr}\phi_2(r+r_0, |x|, t)=0$. Since $p''(r_0)=0$ and $p'''(r_0)>0$, by using the Var der Corput lemma (see e.g \cite{S93}), we have
	\[
	\lt| \int_{0}^{r} e^{it\phi_2(\tilde{r}+r_0, |x|, t)}\, d\tilde{r} \rt| \lesssim |t|^{-\frac{1}{3}}
	\]
	for any interval of $\tilde{r}$. 
	Using the fact 
	\[
	\frac{d^k}{dr^{k}}\tilde{\Psi}_{r_0}(r+r_0) \sim r_0^{2-k} \quad \mbox{for $k=0,1$} \quad \mbox{and} \quad 0<r<2\varepsilon, 
	\]
	and \eqref{property_Z}, we deduce 
	\[
	\begin{aligned}
		&\lt| \int_{0}^{2\varepsilon} e^{itp(r+r_0)-i(r+r_0)|x|}\tilde{\Psi}_{r_0}(r+r_0) \bar{Z}((r+r_0)|x|)\, dr \rt| \\
		&\quad = \lt|  \int_{0}^{2\varepsilon}\lt(\int_{0}^{r} e^{it\phi_2(\tilde{r}+r_0, |x|, t)}\, d\tilde{r}\rt) \, \frac{d}{dr} \lt(   \tilde{\Psi}_{r_0}(r+r_0) \bar{Z}((r+r_0)|x|)\rt)\, dr \rt|\\
		&\quad \leq c_{\varepsilon} |t|^{-\frac{1}{3}}\sup_{r<2\varepsilon}\lt( \big|\tilde{\Psi}_{r_0}' (r+r_0) \bar{Z}((r+r_0)|x|)\big| + |x|\big| \tilde{\Psi}_{r_0}(r+r_0) \bar{Z}'((r+r_0)|x|)\big| \rt)\\
		&\quad \leq c_{\varepsilon} |t|^{-\frac{1}{3}} |x|^{-1} \\
		&\quad \leq c_{\varepsilon} |t|^{-\frac{4}{3}}
	\end{aligned}
	\]
	for $|x|>\frac{1}{8} p'(r_0)t$.
	Combining these results, we obtain
	\[
	\|\mathcal{F}^{-1} (e^{it p(|\xi|) } \Psi_{r_0} )\|_{L^{\infty}} \lesssim |t|^{-\frac{4}{3}},
	\]	
	which completes the proof of  \eqref{degeneracy}.	
\end{proof}

We are now in a position to provide the detailed proof for Proposition \ref{decay}.

\begin{proof}[Proof of Proposition \ref{decay}]
For the proof, we employ an interpolation argument that involves Lemma \ref{dispersive_1} for $0<\sigma\leq 1$, Lemma \ref{dispersive_2} for $1<\sigma<2$, and some standard estimates. Since $e^{itp(|\nabla|)}$ is unitary operator on $L^2$, we obtain
\[
\|e^{itp(|\nabla|)} P_N f\|_{L^2} \lesssim \| f\|_{L^2}
\]
for any $N>0$.
For the case $1<\sigma<2$, we interpolate this result with the estimate given in \eqref{low1}. Consequently, we have
\[
\|e^{itp(|\nabla|)} P_N f\|_{L^{p}} \lesssim |t|^{-3(\frac{1}{2}-\frac{1}{p})} N^{(3+\frac{3\sigma}{2})(\frac{1}{2}-\frac{1}{p})}\| f\|_{L^{p'}}
\]
for $p \ge 2$.
This implies 
\bq\label{low_est}
\|e^{itp(|\nabla|)} P_{<1} f\|_{L^{p}}  \lesssim \sum_{N<1} \|e^{itp(|\nabla|)} P_{N} f\|_{L^{p}}  \lesssim |t|^{-3(\frac{1}{2}-\frac{1}{p})} \|f\|_{L^{p'} }.
\eq
For $N\ge1$, 
we interpolate between \eqref{high1} with $f$ replaced by $P_Nf$ and the following estimate
\[
\|e^{it p(|\nabla|) } P_Nf\|_{L^2} \lesssim  \| P_Nf\|_{L^2}.
\]
Thus, we derive the following estimate
\[
\|e^{it p(|\nabla|) } P_N f\|_{L^{p}} \lesssim   |t|^{-3(\frac{1}{2}-\frac{1}{p})}  \| |\nabla|^{\frac{17}{3}(\frac{1}{2}-\frac{1}{p})}P_Nf\|_{L^{p'}}
\]
for $p\ge 2$. 
By using this,
we obtain 
\bq\label{note_5}
\begin{aligned}
\|e^{itp(|\nabla|)} P_{\ge 1} f\|_{L^p}^2
&\lesssim \sum_{N\ge 1} \|e^{itp(|\nabla|)} P_N f\|_{L^p}^2\\
&\lesssim |t|^{-6(\frac{1}{2}-\frac{1}{p})}\sum_{N\ge 1} \|P_N |\nabla|^{\frac{17}{3}(\frac{1}{2}-\frac{1}{p})}f\|_{L^{ p'}}^2\\
&\lesssim |t|^{-6(\frac{1}{2}-\frac{1}{p})} \| f\|_{W^{\frac{17}{3}(\frac{1}{2}-\frac{1}{p}), p'}}^2\\
\end{aligned}
\eq
since $L^{p'} \subset B_{p', 2}^0$ for $2\leq p <\infty$.
From the embedding relation $B_{p,2}^0 \subset L^{p}$ for $2\leq p<\infty$, we use \eqref{low_est} and \eqref{note_5} to get 
\bq\label{t>1}
\begin{aligned}
\|e^{itp(|\nabla|)} f\|_{L^p}^2 
 \lesssim \| e^{itp(|\nabla|)}P_{<1}f\|_{L^p}^2 + \|e^{itp(|\nabla|)}P_{\ge 1} f\|_{L^p}^2  \lesssim |t|^{-6(\frac{1}{2}-\frac{1}{p})} \|f\|_{W^{\frac{17}{3}(\frac{1}{2}-\frac{1}{p}),p'}}
\end{aligned}
\eq
for $t\ge 1$. When $t<1$, we deduce
\bq\label{t<1}
\| e^{itp(|\nabla|)} f\|_{L^p} \lesssim \| e^{itp(|\nabla|)} f \|_{\dot{H}^{3(\frac{1}{2}-\frac{1}{p})}} \lesssim \|f\|_{\dot{H}^{3(\frac{1}{2}-\frac{1}{p})}} \lesssim \|f\|_{\dot{W}^{6(\frac{1}{2}-\frac{1}{p}), p'}}
\eq
by the Hardy--Littlewood--Sobolev inequality and the Plancherel theorem.
Therefore, a combination of \eqref{t>1} and \eqref{t<1} concludes the desired estimate for $1< \sigma<2$. 
The case $0<\sigma \leq 1$ can be shown in a similar way thanks to Lemma \ref{dispersive_1}. This completes the proof.
\end{proof}
%
%
%
%
%
%
%
%
%
\subsection{Decay estimates}
This subsection is dedicated to establishing $L^p$ type estimates by applying Proposition \ref{decay} to \eqref{sol_2}. We consider the bilinear pseudo-product operator defined as follows:
\[
B_m[f, g] =\mathcal{F}_{\xi}^{-1} \int_{\mathbb{R}^3} m(\xi, \eta) \hat{f}(\xi-\eta) \hat{g}(\eta) \,d\eta.
\]
To achieve the desired $L^p$ estimates, it is required to utilize bilinear estimates for $g$ and $h$ as specified in \eqref{bilinear}. However, it is important to note that our kernel $\frac{m_{r,l}}{\Phi_{r, l}}$ associated with the bilinear operator, as indicated in \eqref{bilinear}, is not locally bounded due to the singularity presented in $\frac{1}{\Phi_{r, l}}$ (e.g. see Lemma \ref{Lemma_low_Phi} for further details).  To address this issue, we construct a new kernel defined by
\bq\label{kernel_new}
\mathfrak{M}_{r, l}=\frac{|\xi|^{\frac{2-\sigma}{2}}|\eta|^{\frac{2-\sigma}{2}}|\xi-\eta|^{\frac{2-\sigma}{2}}}{\lal \xi-\eta\ral^{2 \lambda} \lal \eta\ral^{2 \lambda}\Phi_{r, l}}
\eq
for $\lambda>0$ will be chosen later. We shall employ Gustafson--Nakanishi--Tsai's estimate \cite{GNT07} by introducing the following norm:
\[
\|\mathfrak{M}\|_{M_{\xi, \eta}^{k, \infty}} =\sum_{N\in 2^{\mathbb{Z}}} \|P_N^\eta \mathfrak{M}\|_{L_{\xi}^{\infty} \dot{H}_{\eta}^k}.
\]
Then the following estimates were presented in \cite{GNT07, GP}.
\begin{theorem}\label{GNT estimate}
Let $0\leq k\leq 3/2$, and suppose $\|\mathfrak{M}\|_{M_{\eta, \xi}^{k, \infty}}, \|\mathfrak{M}\|_{M_{\xi, \eta}^{k, \infty}} < \infty$. Then, we have
	\bq\label{bilinear_1}
	\|B_{\mathfrak{M}}[f, g]\|_{L^{l_1'}} \lesssim \|\mathfrak{M}\|_{M_{\eta, \xi}^{k, \infty}} \|f\|_{L^{l_2}} \|g\|_{L^2}
	\eq
	and
\[
	\|B_{\mathfrak{M}}[f, g]\|_{L^{2}} \lesssim \|\mathfrak{M}\|_{M_{\xi, \eta}^{k, \infty}} \|f\|_{L^{l_1}} \|g\|_{L^{l_2}}
\]
for $f, g \in \mathcal{S}$ where
	\[
	2\leq l_1, l_2 \leq \frac{6}{3-2k} \quad \mbox{and} \quad \frac{1}{l_1}+\frac{1}{l_2} =1-\frac{k}{3}.
	\]
	Here, $\mathcal{S}$ is the Schwartz class.
\end{theorem}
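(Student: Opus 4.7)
The plan is to prove Theorem \ref{GNT estimate} via the Gustafson--Nakanishi--Tsai framework for bilinear pseudo-product operators. The guiding principle is that uniform Sobolev regularity of the symbol $\mathfrak{M}(\xi, \eta)$ in one variable translates, through Sobolev embedding in $\R^3$, into $L^p$ boundedness of the corresponding bilinear operator.

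First, I would use duality: pairing $B_{\mathfrak{M}}[f,g]$ against $h \in L^{l_1}$ reduces matters to bounding
\[
I := \iint_{\R^3\times\R^3} \mathfrak{M}(\xi,\eta)\, \hat f(\xi-\eta)\, \hat g(\eta)\, \overline{\hat h(\xi)} \, d\xi\, d\eta
\]
by $\|\mathfrak{M}\|_{M^{k,\infty}_{\eta,\xi}} \|f\|_{L^{l_2}} \|g\|_{L^2} \|h\|_{L^{l_1}}$. Next, I would decompose $\mathfrak{M}$ dyadically with respect to the variable carrying the $\dot H^k$ regularity, so that each piece $P_N \mathfrak{M}$ is Fourier-localized to an annulus of scale $N$ and enjoys the $\ell^1(N)$-summable $L^\infty \dot H^k$ control afforded by the $M^{k,\infty}$ norm.

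Second, on each dyadic block I would apply the Sobolev embedding $\dot H^k(\R^3) \hookrightarrow L^{6/(3-2k)}(\R^3)$, valid for $0 \leq k < 3/2$ (with the endpoint handled by a limiting argument), to convert the regularity of $\mathfrak{M}$ into integrability. Combined with H\"older's inequality in Fourier space and Plancherel/Hausdorff--Young in the remaining variables, this produces the trilinear estimate on each dyadic block, with the $L^2$ slot occupied by $\hat g$ and the other slots occupied by Fourier transforms of $f$ and $h$. The exponent relation $1/l_1 + 1/l_2 = 1 - k/3$ is dictated precisely by balancing the Sobolev gain (of order $k/3$) against the H\"older pairings, while $l_1, l_2 \leq 6/(3-2k)$ is the admissibility constraint for the Sobolev embedding.

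The main technical obstacle will be summing the dyadic pieces without accumulating logarithmic or polynomial losses in $N$; this requires exploiting the almost-orthogonality coming from the disjoint Fourier supports of the Littlewood--Paley blocks together with the $\ell^1(N)$-structure of the $M^{k,\infty}$ norm. An additional delicacy is the mixed-variable factor $\hat f(\xi-\eta)$, which couples the two integration variables and precludes a straightforward Fubini-type reduction; passing back to physical space via the inverse Fourier transform converts this into a product structure amenable to H\"older's inequality in $\R^3_x$ with exponents $l_1$ and $l_2$. The second estimate of the theorem then follows by a symmetric argument, exchanging the roles of $\xi$ and $\eta$ and invoking the hypothesis on $\|\mathfrak{M}\|_{M^{k,\infty}_{\xi,\eta}}$ in place of $\|\mathfrak{M}\|_{M^{k,\infty}_{\eta,\xi}}$.
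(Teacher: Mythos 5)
The paper does not prove Theorem~\ref{GNT estimate}; it is quoted from the literature, with the preceding sentence reading ``the following estimates were presented in \cite{GNT07, GP}.'' There is therefore no in-paper argument to compare your proposal against --- what you have written is a reconstruction of the cited proof of Gustafson--Nakanishi--Tsai and Guo--Pausader.

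Judged as such a reconstruction, the sketch names the right ingredients (duality, dyadic decomposition of $\mathfrak M$ in the variable carrying the regularity, Sobolev-type embedding of $\dot H^k$ into $L^{6/(3-2k)}$, and the physical-space reformulation handling $\hat f(\xi-\eta)$), but two of the ``technical obstacles'' you flag are mis-diagnosed. The summation over dyadic pieces requires no almost-orthogonality: the norm $\|\mathfrak M\|_{M^{k,\infty}_{\xi,\eta}}=\sum_N\|P_N^\eta\mathfrak M\|_{L^\infty_\xi\dot H^k_\eta}$ is already an $\ell^1$ sum, so one proves the per-block estimate and simply adds. Likewise, the endpoint $k=3/2$ is not recovered by a ``limiting argument'' --- $\dot H^{3/2}(\R^3)\not\hookrightarrow L^\infty$, so a limiting Sobolev inequality fails. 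It is precisely the dyadic localization that rescues the endpoint, since on an annulus $|\eta|\sim N$ Bernstein gives $\|P_N u\|_{L^{6/(3-2k)}}\lesssim N^{k}\|P_N u\|_{L^2}$ uniformly for all $k\in[0,3/2]$, and the $\ell^1(N)$ (Besov-type) structure of the $M^{k,\infty}$ norm carries this through. Finally, you attribute the relation $1/l_1+1/l_2=1-k/3$ to a H\"older inequality in $x$ with exponents $l_1,l_2$, but a pointwise H\"older on $f\,g$ would force $1/l_1+1/l_2=1/2$. What actually produces the exponent relation is an extra H\"older in a translation variable: writing $K(\xi,\cdot)=\mathcal F^{-1}_\eta\mathfrak M(\xi,\cdot)$ yields the identity $\widehat{B_{\mathfrak M}[f,g]}(\xi)=\int K(\xi,z)\,\widehat{f\,g(\cdot-z)}(\xi)\,dz$, and one must then H\"older in $z$ against $\|K(\xi,\cdot)\|_{L^{6/(3+2k)}_z}$ (bounded via Hausdorff--Young and the Sobolev/Bernstein step) while estimating $z\mapsto\|f\,g(\cdot-z)\|_{L^2_x}$ by a Young-type convolution bound; this is where the admissibility conditions on $l_1,l_2$ genuinely enter.
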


By applying the product rule at the frequency level along with Theorem \ref{GNT estimate}, we can easily derive the following lemma. For the sake of completeness, we provide the proof in Appendix \ref{proof_Lemma_product} below.
\begin{lemma}\label{GNT_estimate_product}
	Let $\alpha_1, \alpha_2 \ge 0$.
	Under the conditions of Theorem \ref{GNT estimate}, we have
	\bq\label{bilinear_11}
	\|\lal \nabla \ral^{\alpha_1} D^{\alpha_2} B_{\mathfrak{M}}[f, g]\|_{L^{l_1'}} \lesssim  \|f \|_{L^{l_2}} \|g\|_{H^{\alpha_1+\alpha_2}} +\|g\|_{L^{l_2}} 
	\|f \|_{H^{\alpha_1+\alpha_2}}
	\eq
	and 	
	\[
	\|\lal \nabla \ral^{\alpha_1} D^{\alpha_2} B_{\mathfrak{M}}[f, g]\|_{L^{2}} \lesssim  
	\|f \|_{W^{\alpha_1+\alpha_2, l_1}}\|g\|_{L^{l_2}}  + \|g\|_{W^{\alpha_1+\alpha_2, l_1}} \|f \|_{L^{l_2}},
	\]
	where $D^{\alpha} = \lal|\nabla|^{\alpha} \ral$ or $D^{\alpha} =|\nabla|^{\alpha}$.
\end{lemma}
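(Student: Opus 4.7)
The plan is to transfer the outer derivative weight $\lal \nabla \ral^{\alpha_1} D^{\alpha_2}$ onto one of the input functions and then invoke Theorem \ref{GNT estimate}. On the Fourier side the outer operator becomes a multiplier $\lal\xi\ral^{\alpha_1} d^{\alpha_2}(\xi)$, with $d^{\alpha_2}(\xi)\in\{|\xi|^{\alpha_2},\lal|\xi|^{\alpha_2}\ral\}$, acting on the integrand $\mathfrak{M}(\xi,\eta)\hat{f}(\xi-\eta)\hat{g}(\eta)$. Writing $\xi=(\xi-\eta)+\eta$, the triangle inequality together with the elementary bound $(a+b)^\alpha\lesssim a^\alpha+b^\alpha$ (valid for $a,b\ge 0$ and $\alpha\ge 0$) yields the Peetre-type estimate
\[
\lal\xi\ral^{\alpha_1}|\xi|^{\alpha_2} \lesssim \lal\xi-\eta\ral^{\alpha_1}|\xi-\eta|^{\alpha_2}+\lal\eta\ral^{\alpha_1}|\eta|^{\alpha_2},
\]
verified by a case analysis according to whether $|\xi-\eta|\ge|\eta|$ (so the larger frequency lies in the first slot) or $|\eta|>|\xi-\eta|$ (so it lies in the second); in each case the larger of the two dominates $|\xi|$ up to a factor of $2$.

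Next I would introduce a smooth homogeneous partition of unity $1=\theta_1(\xi,\eta)+\theta_2(\xi,\eta)$ subordinate to these two regions, so that on the support of $\theta_i$ the multiplier $\lal\xi\ral^{\alpha_1} d^{\alpha_2}(\xi)$ is pointwise bounded by a smooth multiplier depending only on $\xi-\eta$ (for $i=1$) or only on $\eta$ (for $i=2$). This produces the decomposition
\[
\lal\nabla\ral^{\alpha_1} D^{\alpha_2} B_\mathfrak{M}[f,g] = B_{\mathfrak{M}_1}\!\lt[\lal\nabla\ral^{\alpha_1}D^{\alpha_2}f,\ g\rt] + B_{\mathfrak{M}_2}\!\lt[f,\ \lal\nabla\ral^{\alpha_1}D^{\alpha_2}g\rt],
\]
where $\mathfrak{M}_1,\mathfrak{M}_2$ are $\mathfrak{M}$ multiplied by $\theta_i$ and a bounded smooth weight ratio. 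These new symbols inherit finiteness of both $M^{k,\infty}_{\eta,\xi}$ and $M^{k,\infty}_{\xi,\eta}$ norms from $\mathfrak{M}$, up to a Mikhlin-type constant.

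Applying Theorem \ref{GNT estimate} to each piece then finishes the argument. For $B_{\mathfrak{M}_2}[f,\lal\nabla\ral^{\alpha_1}D^{\alpha_2}g]$, inequality \eqref{bilinear_1} immediately yields $\|f\|_{L^{l_2}}\|g\|_{H^{\alpha_1+\alpha_2}}$. For $B_{\mathfrak{M}_1}[\lal\nabla\ral^{\alpha_1}D^{\alpha_2}f,g]$, I would first swap the slots via the substitution $\eta\mapsto\xi-\eta$, rewriting it as $B_{\tilde{\mathfrak{M}}_1}[g,\lal\nabla\ral^{\alpha_1}D^{\alpha_2}f]$ with $\tilde{\mathfrak{M}}_1(\xi,\eta):=\mathfrak{M}_1(\xi,\xi-\eta)$; since both $M^{k,\infty}$ norms of $\mathfrak{M}$ are assumed finite, $\tilde{\mathfrak{M}}_1$ retains the requisite bounds and \eqref{bilinear_1} produces $\|g\|_{L^{l_2}}\|f\|_{H^{\alpha_1+\alpha_2}}$. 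Summing gives \eqref{bilinear_11}. The $L^2$ estimate is obtained analogously from the second inequality in Theorem \ref{GNT estimate}, placing the derivative-loaded slot in $L^{l_1}$ and using the $l_1\leftrightarrow l_2$ symmetry of the admissibility hypotheses for the second term.

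The main obstacle I anticipate is the verification that $\mathfrak{M}_1,\mathfrak{M}_2$ and their variable-swapped counterparts still satisfy the $M^{k,\infty}$-type bounds. This reduces to a Mikhlin-type check (smoothness and degree-zero homogeneity of $\theta_i$ together with the bounded weight ratios, away from the singular set) combined with a Littlewood--Paley summation in the relevant variable. It is routine but constitutes the main technical content and justifies relegating the full proof to the appendix.
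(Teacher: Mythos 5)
Your proposal takes a genuinely different route from the paper's Appendix \ref{proof_Lemma_product}. There, the derivative weight $\lal\nabla\ral^{\alpha_1}D^{\alpha_2}$ never touches the kernel: the output of $B_\mathfrak{M}[f,g]$ is Littlewood--Paley decomposed, $f$ is further split into pieces of low/high frequency relative to the output band $N$, Theorem \ref{GNT estimate} is applied to the localized pieces with the \emph{unmodified} $\mathfrak{M}$, and the factor $N^{\alpha_1+\alpha_2}$ is absorbed onto whichever input carries the comparable frequency via Bernstein; Cauchy--Schwarz and almost-orthogonality then close the dyadic sums. You instead propose a symbol-level Peetre decomposition of $\lal\xi\ral^{\alpha_1}d^{\alpha_2}(\xi)$ with a partition of unity on the $(\xi,\eta)$-plane, pushing the derivative onto the dominant input slot and invoking the theorem for modified kernels $\mathfrak{M}_i$.

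The gap is precisely the step you flag and then set aside as routine: that $\mathfrak{M}_i$ (and the slot-swapped $\tilde{\mathfrak{M}}_1$) inherit finiteness of the $M^{k,\infty}_{\xi,\eta}$ and $M^{k,\infty}_{\eta,\xi}$ norms ``up to a Mikhlin-type constant.'' These norms are of Besov type, $\sum_N\|P_N^\eta\mathfrak{M}\|_{L^\infty_\xi\dot{H}^k_\eta}$, with a fractional exponent $k\in[0,3/2]$. The cutoffs $\theta_i$ and the weight ratios do not commute with $P_N^\eta$, and pointwise multiplication by a bounded smooth function does not in general preserve $\dot{H}^k_\eta$ or the dyadic $\ell^1$ summation without an extra paraproduct/commutator estimate. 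Worse, since $\theta_i$ is homogeneous of degree zero in the pair $(\xi,\eta)$, one only has $|\partial^\alpha_\eta\theta_i(\xi,\eta)|\lesssim(|\xi|+|\eta|)^{-|\alpha|}$, which degenerates as $|\xi|\to 0$ while the outer norm is $L^\infty_\xi$ --- so no lower bound on $|\xi|$ is available --- and the weight ratio is not even homogeneous when $D^{\alpha_2}=\lal|\nabla|^{\alpha_2}\ral$. (One would hope the singular behaviour of $\mathfrak{M}$ near the degenerate set compensates, but that is exactly what must be checked.) Making this step rigorous would require either a multiplier-stability lemma for the $M^{k,\infty}$ scale or a full re-run of the Proposition \ref{Proposition_m} computations for the modified kernels. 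Neither is present, and neither is easier than the paper's own argument, which sidesteps the issue entirely by never altering $\mathfrak{M}$.
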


In order to employ Theorem \ref{GNT estimate} and Lemma \ref{GNT_estimate_product},
it is essential to demonstrate the following proposition, whose proof is provided in Section \ref{sec_4}.
\begin{proposition}\label{Proposition_m}
	Let $0< \sigma<2$ and $r, l=1,2$. For $0\leq k\leq 3/2$,
	then we have 
	\[
	\|\mathfrak{M}_{r, l}\|_{M_{\xi, \eta}^{k, \infty}} +\|\mathfrak{M}_{r, l}\|_{M_{\eta, \xi}^{k, \infty}} \lesssim 1
	\]
	with $\mathfrak{M}_{r, l}$ given as \eqref{kernel_new} with $\lambda> 7/4+\sigma k$.
\end{proposition}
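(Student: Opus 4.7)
The plan is to reduce, by the symmetry noted in the excerpt, to the case $(r,l) = (1,1)$, where $\Phi := \Phi_{1,1} = p(\xi) - p(\xi-\eta) - p(\eta)$ is the only phase that can vanish; the other three phases have a definite sign and are bounded below by a constant multiple of $\max(p(\xi-\eta), p(\eta))$, so those cases are strictly easier. For $(r,l) = (1,1)$, Lemma \ref{Lemma_low_Phi} identifies the zero set of $\Phi$ at low frequencies essentially as $\{|\eta|^{(2-\sigma)/2} |\xi-\eta|^{(2-\sigma)/2} = 0\}$, and the numerator of $\mathfrak{M}_{r,l}$ is designed to cancel precisely this degeneracy.

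The main step is a triple dyadic decomposition $|\eta| \sim N$, $|\xi-\eta| \sim L$, $|\xi| \sim K$; since $\|\mathfrak{M}_{1,1}\|_{M^{k,\infty}_{\xi,\eta}} = \sum_N \|P_N^\eta \mathfrak{M}_{1,1}\|_{L^\infty_\xi \dot H^k_\eta}$, it suffices to estimate each dyadic piece uniformly in $\xi$ and verify summability in $N$. The analysis splits into three geometric regimes dictated by the asymptotics \eqref{behavior_P1}--\eqref{behavior_P2} of $p$. In the fully low-frequency regime $N, L, K \lesssim 1$ we use $p(r) \sim r^{1-\sigma/2}$ together with Lemma \ref{Lemma_low_Phi} to conclude that $|\xi|^{(2-\sigma)/2}|\eta|^{(2-\sigma)/2}|\xi-\eta|^{(2-\sigma)/2}/|\Phi|$ is pointwise bounded; fractional derivatives in $\eta$ are distributed by Leibniz and absorbed by the explicit $|\eta|^{(2-\sigma)/2}$ factor after summing in $N$, while the Japanese brackets are harmless. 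In the fully high-frequency regime $N, L, K \gtrsim 1$ we have $p(r) \sim r$, $p'(r) \sim 1$, and $\Phi$ can vanish on a codimension-one set of near-collinear configurations, so here the weights $\langle \xi-\eta \rangle^{-2\lambda}\langle \eta \rangle^{-2\lambda}$ supply the decay needed for dyadic summability. The mixed regimes (one scale much smaller than the other two) combine the two analyses, with the numerator factor attached to the small variable providing the cancellation.

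The main obstacle is the high-frequency regime together with fractional regularity: a (fractional) $k$-derivative of $1/\Phi$ produces a singularity of order $|\Phi|^{-(k+1)}$ supported near the zero set of $\Phi$, which must be integrated against $|\eta|^{2k}$ over a shell $|\eta|\sim N$. The approach is to parametrize $\eta$ in tubular coordinates around the zero set of $\Phi$, using that $\nabla_\eta \Phi = -p'(|\xi-\eta|)(\xi-\eta)/|\xi-\eta| - p'(|\eta|)\eta/|\eta|$ is transverse to that set away from exceptional configurations; Fubini then reduces the $\dot H^k_\eta$ estimate to a one-dimensional fractional integral that converges for $k \leq 3/2$. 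The threshold $\lambda > 7/4 + \sigma k$ is what is needed to close the dyadic summation: the $7/4$ absorbs the dimensional volume factor $N^{3/2}$ and the losses from applying $k$ derivatives to $|\eta|^{(2-\sigma)/2}$, while the $\sigma k$ correction comes from redistributing fractional $\eta$-derivatives between the numerator and the phase through a commutator/Leibniz expansion. The estimate for $\|\mathfrak{M}_{r,l}\|_{M^{k,\infty}_{\eta,\xi}}$ follows by the symmetric argument with the roles of $\xi$ and $\eta$ interchanged, since $\mathfrak{M}_{r,l}$ treats $\eta$ and $\xi-\eta$ (and, after a change of variables, $\eta$ and $\xi$) on an equal footing up to relabelling of the weights.
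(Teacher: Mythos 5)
Your high-level organization (reduce to $(r,l)=(1,1)$, split into low/high/mixed frequency regimes, recognize that the numerator $|\xi|^{(2-\sigma)/2}|\eta|^{(2-\sigma)/2}|\xi-\eta|^{(2-\sigma)/2}$ cancels the degeneracy of $\Phi_{1,1}$ where $|\eta|$ or $|\xi-\eta|$ vanishes, and rely on the $\langle\cdot\rangle^{-2\lambda}$ weights for dyadic summability) matches the paper. But the central technical step you propose is a genuine gap: you assert that near the zero set of $\Phi$ one can set up tubular coordinates because $\nabla_\eta\Phi$ is ``transverse to that set away from exceptional configurations,'' and then reduce the $\dot H^k_\eta$ estimate to a one-dimensional fractional integral by Fubini. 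This transversality is a \emph{lower} bound on $|\nabla_\eta\Phi|$ along $\{\Phi=0\}$, which is never established in the paper and is not a consequence of anything available: Lemma~\ref{Lemma_deriv_Phi} gives only \emph{upper} bounds for $|\nabla_\eta\Phi|$ and $|\Delta_\eta\Phi|$, and Lemma~\ref{Lemma_low_Phi} gives only lower bounds for $|\Phi|$. Moreover, near $\{\eta=0\}$ (where $\Phi$ vanishes) the gradient $\nabla_\eta\Phi$ actually diverges like $|\eta|^{-\sigma/2}$, so the geometry is not that of a smooth level set with uniform transversality; a tubular-coordinate parametrization would have to cope with this blow-up, and with the interaction between the angular degeneracy ($\theta\to 0$) and the radial one. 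None of this is worked out, and the claimed one-dimensional reduction and the origin of the exponent $\lambda>7/4+\sigma k$ are stated but not derived.

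The paper sidesteps all of this with a more elementary mechanism that you should compare to. Instead of parametrizing the zero set, it computes \emph{pointwise} bounds on $|\mathfrak M|$ and on the full Laplacian $|\Delta_\xi\mathfrak M|$ (respectively $|\Delta_\eta\mathfrak M|$), using the explicit formulas for $\nabla(1/\Phi)$ and $\Delta(1/\Phi)$, the upper bounds of Lemma~\ref{Lemma_deriv_Phi}, and the lower bounds of Lemma~\ref{Lemma_low_Phi}; these are then integrated over a dyadic shell, passing to spherical coordinates (with $-\eta/|\eta|$ as the pole) only to control the angular degeneracy $|\Phi|\gtrsim |\eta|(\langle\xi\rangle^{-2\sigma}+\theta^2)$. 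The fractional Sobolev norm $\dot H^k$ is then obtained by \emph{interpolating} $\|\mathfrak M\varphi_N\|_{\dot H^k}\lesssim \|\mathfrak M\varphi_N\|_{L^2}^{1-k/2}\|\mathfrak M\varphi_N\|_{\dot H^2}^{k/2}$, so no fractional Leibniz rule or fractional derivative of $1/\Phi$ is ever needed. Finally, the second estimate $\|\mathfrak M\|_{M^{k,\infty}_{\eta,\xi}}$ does \emph{not} follow by simple symmetry as you claim: $\mathfrak M_{1,1}$ is symmetric under $\eta\leftrightarrow\xi-\eta$, but swapping $\xi$ and $\eta$ changes the phase and both weights, so the paper has to redo the case analysis (subcases (B1)--(B2) versus (B$'$1)--(B$'$2), etc.) with $\Delta_\eta$ replacing $\Delta_\xi$ and different bounds on the phase derivatives. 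You would need to do the same.
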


We now establish the $L^p$ decay estimates in the following theorem.
\begin{theorem}\label{Theorem_decay}
	Let $0< \sigma<2$ and $p$ satisfies \eqref{p}. For $s\ge 16$ and $T^{\ast}>0$, then we have
	\[
	(1+t)^{\beta_{\sigma}}\|\alpha(t)\|_{W^{s, p}}\lesssim \|\alpha(0)\|_{Y}  +  \sup_{0<t<T^{\ast}}\|\alpha(t)\|_{X}^2 +\sup_{0<t<T^{\ast}}\|\alpha(t)\|_{X}^3,
	\]
	where $\beta_{\sigma}$ is given as in \eqref{beta}.
\end{theorem}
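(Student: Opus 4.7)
The plan is to apply the normal-form decomposition \eqref{sol_2},
\[
\alpha(t,x)=e^{itp(|\nabla|)}\alpha(0,x)+g(t,x)-e^{itp(|\nabla|)}g(0,x)+h(t,x),
\]
and bound each of the four pieces separately in $W^{s,p}$ with the prescribed $(1+t)^{-\beta_{\sigma}}$ decay. The linear piece is immediate: Proposition \ref{decay} applied with $f=\alpha(0)$ yields
\[
\|e^{itp(|\nabla|)}\alpha(0)\|_{W^{s,p}}\lesssim (1+t)^{-\beta_{\sigma}}\|\alpha(0)\|_{W^{s+6(\frac{1}{2}-\frac{1}{p}),p'}}\lesssim (1+t)^{-\beta_{\sigma}}\|\alpha(0)\|_Y.
\]

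For the quadratic normal-form term $g(t)$, the core manipulation is the factorization
\[
\frac{m_{r,l}(\xi,\eta)}{\Phi_{r,l}(\xi,\eta)}=\mathfrak{M}_{r,l}(\xi,\eta)\cdot\frac{m_{r,l}(\xi,\eta)\,\langle\xi-\eta\rangle^{2\lambda}\langle\eta\rangle^{2\lambda}}{|\xi|^{\frac{2-\sigma}{2}}|\eta|^{\frac{2-\sigma}{2}}|\xi-\eta|^{\frac{2-\sigma}{2}}},
\]
in which the second factor is further split as a product of Fourier multipliers in $\xi$, in $\xi-\eta$, and in $\eta$, using the structural form $m_{r,l}\sim (p(\xi)+|\xi|)\,r(\xi)r(\xi-\eta)r(\eta)$ from Section \ref{sec_2}. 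The $\xi$-piece is a multiplier of order at most $1$ acting on the output, absorbed via \eqref{est_r} and standard multiplier bounds; the $(\xi-\eta)$- and $\eta$-pieces turn each input $\alpha$ into $\langle\nabla\rangle^{2\lambda}|\nabla|^{-\frac{2-\sigma}{2}}\alpha$, whose Sobolev regularity is controlled, for $2\lambda\le 2s$, by $\||\nabla|^{-\frac{2-\sigma}{2}}\alpha\|_{H^{2s}}\le\|\alpha\|_X$. After distributing the outer $\langle\nabla\rangle^s$ derivative by the frequency-level Leibniz rule $\langle\xi\rangle^s\lesssim\langle\xi-\eta\rangle^s+\langle\eta\rangle^s$, Proposition \ref{Proposition_m} (with $k>1$ and $\lambda>7/4+\sigma k$) together with Lemma \ref{GNT_estimate_product} yields
\[
\|g(t)\|_{W^{s,p}}\lesssim \|\alpha(t)\|_{W^{s,p}}\,\|\alpha(t)\|_{X}\lesssim (1+t)^{-\beta_{\sigma}}\|\alpha(t)\|_{X}^{2},
\]
with one copy of $\alpha$ placed in $L^p$ (supplying the time decay through the $X$-norm) and the other in an $L^2$-based Sobolev norm. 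The same bilinear analysis evaluated at $t=0$, followed by Proposition \ref{decay}, controls $e^{itp(|\nabla|)}g(0)$ by $(1+t)^{-\beta_{\sigma}}\|\alpha(0)\|_{Y}^{2}$.

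For the cubic term, we write
\[
h(t)=-2i\sum_{r,l=1}^{2}\int_{0}^{t}e^{i(t-\tau)p(|\nabla|)}\,B_{m_{r,l}/\Phi_{r,l}}\bigl[\alpha_{r}(\tau),\,\mathcal{Q}(\alpha)(\tau)\bigr]\,d\tau,
\]
apply \eqref{Decay_0} to pull out a factor $(1+t-\tau)^{-\beta_{\sigma}}$ at the cost of $6(\tfrac12-\tfrac1p)$ extra derivatives in $L^{p'}$, and bound the bilinear operator exactly as in the quadratic step. Placing $\alpha_{r}$ in $L^p$ (contributing $(1+\tau)^{-\beta_{\sigma}}$ via the $X$-norm) and $\mathcal{Q}(\alpha)$ in an $L^2$-based Sobolev norm, and using that $\mathcal{Q}(\alpha)$ is itself bilinear to bound it by $(1+\tau)^{-\beta_{\sigma}}\|\alpha(\tau)\|_{X}^{2}$, the integrand is dominated by $(1+t-\tau)^{-\beta_{\sigma}}(1+\tau)^{-2\beta_{\sigma}}\|\alpha\|_{X}^{3}$. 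Splitting the $\tau$-integral at $\tau=t/2$, the elementary inequality
\[
\int_{0}^{t}(1+t-\tau)^{-\beta_{\sigma}}(1+\tau)^{-2\beta_{\sigma}}\,d\tau\lesssim (1+t)^{-\beta_{\sigma}}
\]
holds precisely because $2\beta_{\sigma}>1$, which is the role of the lower bounds on $p$ in \eqref{p}. Summing the contributions gives the claimed estimate.

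The main obstacle is the cubic term: the phase $\Phi_{r,l}$ vanishes on a substantial subset of $(\xi,\eta)$-space, so $m_{r,l}/\Phi_{r,l}$ cannot be handled as a bounded symbol, and the whole scheme hinges on introducing the modified kernel $\mathfrak{M}_{r,l}$ and on the $M^{k,\infty}$-boundedness supplied by Proposition \ref{Proposition_m}. A related bookkeeping challenge is the regularity budget: each input factor is raised by $2\lambda$ derivatives from the $\langle\cdot\rangle^{2\lambda}$ weights and lowered by $|\nabla|^{-\frac{2-\sigma}{2}}$, and the total must fit inside the $H^{2s}$ component of the $X$-norm; combined with the constraints $k>1$ and $\lambda>7/4+\sigma k$ from Proposition \ref{Proposition_m}, this is what explains the requirement $s\ge 16$.
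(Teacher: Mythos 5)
Your proposal follows the same high-level route as the paper — the normal-form decomposition \eqref{sol_2}, the kernel $\mathfrak{M}_{r,l}$ with Proposition \ref{Proposition_m}, Lemma \ref{GNT_estimate_product}, and the split of the $\tau$-integral at $t/2$ — so the strategy is correct. However, there are two points where the proposal glosses over or misstates the actual argument.

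First, for the quadratic term $g(t)$, you cannot simply place one copy of $\alpha$ in $L^p$ and the other in an $L^2$-based Sobolev norm: Lemma \ref{GNT_estimate_product} outputs into $L^{l_1'}$ (which forces $l_1=p$ if you want $L^{p'}$ output, and then $l_2<3<p$) or into $L^2$; neither variant lands directly in $L^p$. The paper must therefore split into $|\xi|\leq 1$ and $|\xi|\geq 1$: for low frequencies it uses the Hardy--Littlewood--Sobolev embedding $\dot H^{3(1/2-1/p)}\subset L^p$ together with the $L^2$-output variant of the bilinear estimate, and for high frequencies it performs a careful interpolation with exponents $\nu=3/4-\delta$ so that the combined power of $\|\alpha\|_{W^{s,p}}$ across both factors is exactly one. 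Your sketch suppresses this genuinely nontrivial step, which is where most of the regularity bookkeeping and the condition on $\lambda$ actually enter.

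Second, for the cubic term $h(t)$, the integrand carries only a single power $(1+\tau)^{-\beta_\sigma}$, not $(1+\tau)^{-2\beta_\sigma}$. The reason is the same: the exponent $l_2=1/(1/2-1/p+\delta/3)$ is always below $3$, so $\|A\|_{L^{l_2}}$ is controlled via Sobolev embedding by the $H^{2s}$ piece of the $X$-norm, \emph{not} the time-weighted $W^{s,p}$ piece; only the $\|\alpha\|_{W^{\sigma/2,p}}$ factor inside the estimate \eqref{Q_B} of $\mathcal{Q}(\alpha)$ supplies time decay. Consequently the role of the lower bounds on $p$ in \eqref{p} is to ensure $\beta_\sigma>1$ (not just $2\beta_\sigma>1$), which is what makes $\int_0^t(1+t-\tau)^{-\beta_\sigma}(1+\tau)^{-\beta_\sigma}\,d\tau\lesssim(1+t)^{-\beta_\sigma}$ hold. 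Your final bound is unaffected, but the stated integrand decay is not achievable with the available bilinear estimates, and this misreads where the integrability constraint actually comes from.
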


\begin{remark}
A condition $s>\max\{13/2+4\sigma-3/p, 13/3+35\sigma/6\}$ is required in the proof. For simplicity of presentation, we impose the condition $s\ge 16$ when $0<\sigma<2$.
\end{remark}

\begin{proof}[Proof of Theorem \ref{Theorem_decay}]
	From \eqref{sol_2}, we first see that
	\[
	\alpha(t)=e^{itp(|\nabla|)} \alpha(0)+g(t)-e^{itp(|\nabla|)}g(0) +h(t) .
	\]
	Then, it suffices to show the following estimates:
	\[
		 \|h(t)\|_{W^{s, p} }\lesssim (1+t)^{-\beta_{\sigma}}\|\alpha\|_{X}^3, \quad \|g(t)\|_{W^{s, p}}\lesssim (1+t)^{-\beta_{\sigma}}\|\alpha\|_{X}^2,\quad \mbox{and} \quad \|g(0)\|_{W^{s+6(\frac{1}{2}-\frac{1}{p}), p'}} \lesssim \|\alpha\|_{X}^2.
	\]
	Once these estimates are obtained, it follows from Proposition \ref{decay_0} that
	\[
	\begin{aligned}
		\|\alpha(t)\|_{W^{s, p}} 
		&\lesssim (1+t)^{-\beta_{\sigma}}(\|\alpha(0)\|_{W^{s+6(\frac{1}{2}-\frac{1}{p}), p'}}  +  \|g(0)\|_{W^{s+6(\frac{1}{2}-\frac{1}{p}), p'}} ) + \|h(t)\|_{W^{s, p}}  +\|g(t)\|_{W^{s, p}}\\
		&\lesssim  (1+t)^{-\beta_{\sigma}}(\|\alpha(0)\|_{Y}  +  \|\alpha\|_{X}^2 +\|\alpha\|_{X}^3 )
	\end{aligned}
	\]
	which deduces the desired result.
	
	$\bullet$ Estimates for $h(t)$.
	We begin with the expression for $h(t)$ given by  
	\[
	h(t)=-2i  \mathcal{F}_{\xi}^{-1}\lt( \int_{0}^t \int_{\R^3} e^{i(t-\tau)p(\xi)} \frac{m(\xi, \eta)}{\Phi(\xi, \eta)} \widehat{\alpha}(\xi-\eta)\widehat{\mathcal{Q}(\alpha)}(\tau, \eta) \,d\eta d\tau\rt)
	\]
	omitting subscripts $r, l$ for simplicity in our analysis, where
	\[
	\widehat{\mathcal{Q}(\alpha)}(\tau, \eta)=\int_{\R^3} m(\eta, \zeta) \widehat{\alpha}(\eta-\zeta)\widehat{\alpha}(\zeta) \,d\zeta.
	\]
	We note that
	\[
	\begin{aligned}
		\frac{m(\xi, \eta)}{\Phi(\xi, \eta)} 
		&=c\,\mathfrak{M}(\xi, \eta) (1+|\xi|^{\sigma})^{\frac{1}{2}}r(\xi) \frac{r(\xi-\eta)\lal \xi-\eta\ral^{2\lambda}}{|\xi-\eta|^{(2-\sigma)/2}} \frac{r(\eta)\lal \eta\ral^{2\lambda}}{|\eta|^{(2-\sigma)/2}} \\
		&\quad +c\,\mathfrak{M}(\xi, \eta) |\xi|^{\frac{\sigma}{2}} \frac{r(\xi-\eta)\lal \xi-\eta\ral^{2\lambda}}{|\xi-\eta|^{(2-\sigma)/2}} \frac{r(\eta)\lal \eta\ral^{2\lambda}}{|\eta|^{(2-\sigma)/2}}, 
	\end{aligned}	
	\]
	and then by using Proposition \ref{decay}, the relation \eqref{est_r}, and Lemma \ref{GNT_estimate_product} with $k=3/2-\delta$ for sufficiently small $\delta>0$ in turn, we obtain
	\[
	\begin{aligned}
		\|h(t) \|_{W^{s, p}}
		&\lesssim \int_0^t (1+t-\tau)^{-\beta_{\sigma}} \lt\| \mathcal{F}^{-1}_{\xi} \lt( \int_{\R^3} \lal \xi\ral^{s+6(\frac{1}{2}-\frac{1}{p})}\frac{m(\xi, \eta)}{\Phi(\xi,\eta)} \widehat{\alpha}(\tau,\xi-\eta) \widehat{\mathcal{Q}(\alpha)} (\tau,\eta) d\eta \rt) \rt\|_{L^{p'}} d\tau\\
		&  \lesssim \int_0^t (1+t-\tau)^{-\beta_{\sigma}}
		\lt\| \mathcal{F}^{-1}_{\xi} \lt( \int_{\R^3} \lal \xi\ral^{s+6(\frac{1}{2}-\frac{1}{p})}(D_1^{\frac{\sigma}{2}}+D_2^{\frac{\sigma}{2}}) \mathfrak{M}(\xi, \eta)  \widehat{A}(\tau,\xi-\eta) \widehat{B} (\tau,\eta) d\eta \rt) \rt\|_{L^{p'}} d\tau\\
		& \lesssim  \int_0^t (1+t-\tau)^{-\beta_{\sigma}} \lt( \| A\|_{H^{s+\frac{\sigma}{2}+6(\frac{1}{2}-\frac{1}{p})}} \|B\|_{L^{\frac{1}{\frac{1}{2}-\frac{1}{p}+\frac{\delta}{3}}}} +\|A\|_{L^{\frac{1}{\frac{1}{2}-\frac{1}{p}+\frac{\delta}{3}}}}\|B\|_{H^{s+\frac{\sigma}{2}+6(\frac{1}{2}-\frac{1}{p})}}\rt) d\tau,
	\end{aligned}
	\]
	where we let $D_1^{\sigma/2}=(1+|\xi|^{\sigma})^{1/2}$, $D_2^{\sigma/2}=|\xi|^{\sigma/2}$, and
	\bq\label{set_A}
	\widehat{A}(\xi-\eta) = \frac{r(\xi-\eta) \lal \xi-\eta\ral^{2\lambda}}{|\xi-\eta|^{(2-\sigma)/2}} \widehat{\alpha}(\xi-\eta), \quad 
	\widehat{B}(\eta) = \frac{r(\eta) \lal \eta\ral^{2\lambda}}{|\eta|^{(2-\sigma)/2}} \widehat{\mathcal{Q}(\alpha)}(\eta).
	\eq
	We observe that for given $\alpha \in X$, the following inequalities hold:
	\[
	\begin{aligned}
		&\|A\|_{H^{s+\frac{\sigma}{2}+6(\frac{1}{2}-\frac{1}{p})}}
		= \| |\nabla|^{-\frac{2-\sigma}{2}} \alpha \|_{H^{s+\frac{\sigma}{2}+2\lambda +6(\frac{1}{2}-\frac{1}{p})}}\lesssim \|\alpha\|_{\dot{H}^{-\frac{2-\sigma}{2}} \cap H^{s+\frac{\sigma}{2}+2\lambda +6(\frac{1}{2}-\frac{1}{p})} }\lesssim \|\alpha\|_{X},\\
		&\|A\|_{L^{\frac{1}{\frac{1}{2}-\frac{1}{p}+\frac{\delta}{3}}}}
		\lesssim \| |\nabla|^{-\frac{2-\sigma}{2}} \alpha \|_{H^{2\lambda+ \frac{3}{p}-\delta}} 	
		\lesssim \lt\|  \alpha \rt\|_{\dot{H}^{-\frac{2-\sigma}{2}} \cap H^{ 2\lambda+ \frac{3}{p}-\delta} }\lesssim \|\alpha\|_{X}
	\end{aligned}
	\]
	by using the embedding relation $H^{\frac{3}{p}-\delta} \subset  \dot{H}^{\frac{3}{p}-\delta} \subset {L^{\frac{1}{\frac{1}{2}-\frac{1}{p}+\frac{\delta}{3}}}}$.
	In a similar way, we have
	\bq\label{B}
	\begin{aligned}
		&\|B\|_{H^{s+\frac{\sigma}{2}+6(\frac{1}{2}-\frac{1}{p})}} 
		\lesssim \||\nabla|^{-\frac{2-\sigma}{2}} \mathcal{Q}(\alpha)\|_{H^{s+\frac{\sigma}{2}+2\lambda+6(\frac{1}{2}-\frac{1}{p})}},\\
		&\|B\|_{L^{\frac{1}{\frac{1}{2}-\frac{1}{p}+\frac{\delta}{3}}}} \lesssim \||\nabla|^{-\frac{2-\sigma}{2}} \mathcal{Q}(\alpha)\|_{H^{2\lambda+\frac{3}{p}-\delta}}\lesssim \||\nabla|^{-\frac{2-\sigma}{2}} \mathcal{Q}(\alpha)\|_{H^{s+\frac{\sigma}{2}+2\lambda+6(\frac{1}{2}-\frac{1}{p})}}.
	\end{aligned}
	\eq
	Employing the Plancherel theorem twice, we show for any $\tilde{s}\ge 0$  that 
	\[
	\begin{aligned}
		\||\nabla|^{-\frac{2-\sigma}{2}} \mathcal{Q}(\alpha)\|_{H^{\tilde{s}}} 
		&\sim  \lt\| \int_{\R^3} \frac{\lal \eta\ral^{\tilde{s}}m(\eta, \zeta)}{|\eta|^{(2-\sigma)/2}} \widehat{\alpha}(\eta-\zeta) \widehat{\alpha}(\zeta) \,d\zeta\rt\|_{L_{\eta}^2}\\
		&\lesssim  \lt\| \int_{\R^3} \lal \eta\ral^{\tilde{s}}r(\eta-\zeta) \widehat{\alpha}(\eta-\zeta) r(\zeta) \widehat{\alpha}(\zeta) \,d\zeta\rt\|_{L_{\eta}^2}\\ 
		&\quad +\lt\| \int_{\R^3} \lal \eta\ral^{\tilde{s}}|\eta|^{\frac{\sigma}{2}} r(\eta-\zeta)\widehat{\alpha}(\eta-\zeta) r(\zeta)\widehat{\alpha}(\zeta) \,d\zeta\rt\|_{L_{\eta}^2}\\
		&\lesssim \|  (r(\nabla)\alpha) (\lal \nabla \ral^{\tilde{s}} r(\nabla) \alpha) \|_{L^2}
		+ \|  (\lal \nabla\ral^{\tilde{s}} r(\nabla)\alpha) (|\nabla|^{\frac{\sigma}{2}}r(\nabla) \alpha) \|_{L^2}\\
		&\quad + \|  (r(\nabla)\alpha) (\lal \nabla \ral^{\tilde{s}}|\nabla|^{\frac{\sigma}{2}}r(\nabla) \alpha) \|_{L^2}
	\end{aligned}
	\]
	due to the symmetry.
	By the H\"older inequality, \eqref{est_r}, and the embedding $\dot{H}^{\frac{3}{p}} \subset L^{\frac{1}{\frac{1}{2}-\frac{1}{p}}}$, we arrive at
	\bq\label{Q_B}
	\begin{aligned}
		\||\nabla|^{-\frac{2-\sigma}{2}} \mathcal{Q}(\alpha)\|_{H^{\tilde{s}}} 	&\lesssim \|\alpha\|_{L^p} \|\alpha\|_{W^{\tilde{s}, \frac{1}{\frac{1}{2}-\frac{1}{p}}}}
		+\|\alpha\|_{W^{\tilde{s}, \frac{1}{\frac{1}{2}-\frac{1}{p}}}} \||\nabla|^{\frac{\sigma}{2}}\alpha\|_{L^p}
		+\|\alpha\|_{L^p} \||\nabla|^{\frac{\sigma}{2}}\alpha\|_{W^{\tilde{s},\frac{1}{\frac{1}{2}-\frac{1}{p}}}}\\
		&\lesssim \|\alpha\|_{W^{\frac{\sigma}{2}, p}} \|\alpha\|_{H^{\tilde{s}+\frac{3}{p}+\frac{\sigma}{2}}}
	\end{aligned}
	\eq
	for any $\tilde{s}\ge 0$. Putting \eqref{Q_B} into \eqref{B} with $\tilde{s}=s+\sigma/2+2\lambda+6(1/2-1/p)$,  we deduce 
	\[
	\begin{aligned}
		&\| A\|_{H^{s+\frac{\sigma}{2}+6(\frac{1}{2}-\frac{1}{p})}} \|B\|_{L^{\frac{1}{\frac{1}{2}-\frac{1}{p}+\frac{\delta}{3}}}} +\|A\|_{L^{\frac{1}{\frac{1}{2}-\frac{1}{p}+\frac{\delta}{3}}}}\|B\|_{H^{s+\frac{\sigma}{2}+6(\frac{1}{2}-\frac{1}{p})}}  \lesssim \|\alpha\|_{W^{s, p}} \|\alpha\|_{X}^2  
		\lesssim (1+\tau)^{-\beta_{\sigma}}\|\alpha\|_{X}^3
	\end{aligned}
	\]
for
	\bq\label{s_condition1}
s>2\lambda+3 -\frac{3}{p}+\sigma.
\eq
Consequently, we have
	\[
	\begin{aligned}
		\| h(t) \|_{W^{s, p}} 
		&\lesssim \|\alpha\|_{X}^3 \lt( \int_0^{\frac{t}{2}} +\int_{\frac{t}{2}}^t \rt)  (1+t-\tau)^{-\beta_{\sigma}} (1+\tau)^{-\beta_{\sigma}} \,d\tau \lesssim \lt(1+\frac{t}{2} \rt)^{-\beta_{\sigma}}\|\alpha\|_{X}^3
		\lesssim (1+t)^{-\beta_{\sigma}}\|\alpha\|_{X}^3. 
	\end{aligned}
	\]
	
	$\bullet$ Estimates for $g(t)$. We recall the expression for $g$ as follows:
	\[
	g= i\mathcal{F}_{\xi}^{-1} \lt( \int_{\R^3} \frac{m(\xi, \eta)}{\Phi(\xi, \eta)} \widehat{\alpha}(\xi-\eta)\widehat{\alpha}(\eta) \,d\eta\rt).
	\]
	When $|\xi|\leq 1$, we apply the Hardy--Littlewood--Sobolev embedding theorem, the Plancherel theorem and Lemma \ref{GNT_estimate_product} to obtain
	\[
	\begin{aligned}
		\|g\|_{W^{s, p}} 
		&\lesssim \lt\| \lal\xi \ral^s |\xi|^{3(\frac{1}{2}-\frac{1}{p})}  \int_{\R^3} \frac{m(\xi, \eta)}{\Phi(\xi, \eta)} \widehat{\alpha}(\xi-\eta)\widehat{\alpha}(\eta) \, d\eta \rt\|_{L_{\xi}^2}\\
		&\lesssim  \lt\| | \xi|^{\frac{2-\sigma}{2}} \int_{\R^3} \mathfrak{M}(\xi, \eta) \widehat{A}(\xi-\eta)\widehat{A}(\eta) \, d\eta\rt\|_{L_{\xi}^2}\\
		& \lesssim \||\nabla|^{\frac{2-\sigma}{2}}A\|_{L^p} \|A\|_{L^{\frac{1}{\frac{1}{2}-\frac{1}{p}+\frac{\delta}{3}}}}\\
		&\lesssim \| \alpha\|_{W^{2\lambda, p}} \lt\|  \alpha \rt\|_{\dot{H}^{-\frac{2-\sigma}{2}} \cap H^{ 2\lambda+ \frac{3}{p}-\delta} }\\
		&\lesssim \| \alpha\|_{W^{s, p}} \|\alpha\|_{X},
	\end{aligned}
	\]
	where $A$ is given as in \eqref{set_A}.
	Here, for the third and fourth inequalities above, we used 
	\[
	|\xi|^{\frac{2-\sigma}{2}} \le |\xi-\eta|^{\frac{2-\sigma}{2}}+|\eta|^{\frac{2-\sigma}{2}} 
	\]
	and the embedding property  $\dot{H}^{\frac{3}{p}-\delta} \subset {L^{\frac{1}{\frac{1}{2}-\frac{1}{p}+\frac{\delta}{3}}}}$, respectively.
	
	For $|\xi|\ge 1$, we use Lemma \ref{GNT_estimate_product} with $k=3/2-\delta$,
	\[
	\frac{1}{l_1}=\nu \lt(\frac{1}{2}-\frac{1}{p}+\frac{2-\sigma}{6}\rt)+\frac{2\delta}{3}, \quad \mbox{and} \quad \frac{1}{l_2} = \frac{1}{2}-\nu\lt(\frac{1}{2}-\frac{1}{p}+\frac{2-\sigma}{6}\rt)-\frac{\delta}{3},
	\]
	where $\nu=3/4-\delta$. Here, $\delta>0$ is chosen sufficiently small.
	Then, we get
	\[
	\begin{aligned}
		\|g\|_{W^{s, p}} 
		&\lesssim \lt\| |\xi|^{s+3(\frac{1}{2}-\frac{1}{p})} \int_{\R^3} \frac{m(\xi, \eta)}{\Phi(\xi, \eta)} \widehat{\alpha}(\xi-\eta)\widehat{\alpha}(\eta) \, d\eta\rt\|_{L_{\xi}^2}\\
		&\lesssim  \lt\| | \xi|^{s+3(\frac{1}{2}-\frac{1}{p})} \int_{\R^3} \mathfrak{M}(\xi, \eta) \widehat{A}(\xi-\eta)\widehat{A}(\eta) \, d\eta\rt\|_{L_{\xi}^2}  + \lt\| | \xi|^{s+3(\frac{1}{2}-\frac{1}{p})+\frac{\sigma}{2}}\int_{\R^3} \mathfrak{M}(\xi, \eta) \widehat{A}(\xi-\eta)\widehat{A}(\eta) \, d\eta\rt\|_{L_{\xi}^2}\\
		& \lesssim \||\nabla|^{s+3(\frac{1}{2}-\frac{1}{p})}A\|_{L^{l_1}} \|A\|_{L^{l_2}} +\||\nabla|^{s+3(\frac{1}{2}-\frac{1}{p})+\frac{\sigma}{2}}A\|_{L^{l_1}} \|A\|_{L^{l_2}}
	\end{aligned}
	\]
	due to the symmetry. We further estimate
	\[
	\begin{aligned}
		\||\nabla|^{s+3(\frac{1}{2}-\frac{1}{p})+\frac{\sigma}{2}}A\|_{L^{l_1}}
		&=\| |\nabla|^{s-1+\sigma+3(\frac{1}{2}-\frac{1}{p})}\lal \nabla\ral^{2\lambda} \alpha\|_{L^{l_1}} \\
		&\lesssim \|\lal \nabla \ral^{2\lambda} \alpha\|_{W^{s-2\lambda, p}}^{1-\nu} \| \lal \nabla\ral^{2\lambda}\alpha\|_{H^{s_1-2\lambda}}^{\nu}	\\
		&\lesssim \|\alpha\|_{W^{s, p}}^{1-\nu} \| \alpha\|_{H^{2s}}^{\nu}	\\
		&\lesssim \|\alpha\|_{W^{s, p}}^{1-\nu} \| \alpha\|_{X}^{\nu}	
	\end{aligned}
	\]
	for
		\bq \label{s_condition2}
	s_1= s -\frac{2-\sigma}{2}+\frac{1}{\nu} \lt(2\lambda -2\delta +\sigma+\frac{1}{2}\rt)<2s.
	\eq
	Analogously, we deduce
	\[
	\| |\nabla|^{s+3(\frac{1}{2}-\frac{1}{p})} A\|_{L^{l_1}} \lesssim \|\alpha\|_{W^{s, p}}^{1-\nu}\|\alpha\|_X^{\nu}.
	\]
	On the other hand, we use the Bernstein inequality to get
	\[
	\begin{aligned}
		\|P_{<1}A\|_{L^{l_2}} 
		&\lesssim \sum_{N<1} N^{-\frac{2-\sigma}{2}}\|P_N \alpha\|_{L^{l_2}} \\
		&\lesssim \sum_{N<1}N^{-\frac{2-\sigma}{2}} N^{3(\frac{1}{l_3}-\frac{1}{l_2})} \|P_N \alpha\|_{L^{l_3}} \\
		&\lesssim \sum_{N<1}N^{\delta} (N^{-\frac{2-\sigma}{2}} \|P_N \alpha\|_{L^2})^{1-\nu} \|P_N \alpha\|_{L^{p}}^{\nu}\\
		&\lesssim \|\alpha\|_X^{1-\nu} \|\alpha\|_{W^{s, p}}^{\nu}, 
	\end{aligned}
	\]
	where
	\[
	\frac{1}{l_3}=\frac{1-\nu}{2}+\frac{\nu}{p}. 
	\]
We also observe that
	\[
	\begin{aligned}
		\|P_{\ge1}A\|_{L^{l_2}} 
		&\lesssim \sum_{N\ge 1}N^{-\frac{2-\sigma}{2}} \|P_N \lal \nabla\ral^{2\lambda}\alpha\|_{L^{l_2}} \lesssim \|\lal \nabla \ral^{2\lambda} \alpha\|_{L^{l_2}} \lesssim  \|\lal \nabla\ral^{2\lambda}\alpha\|_{L^2}^{1-\nu} \| \lal \nabla\ral^{2\lambda}\alpha\|_{W^{s_2, p}}^{\nu} \lesssim  \|\alpha\|_{X}^{1-\nu} \| \alpha\|_{W^{s, p}}^{\nu}.
	\end{aligned}
	\]
	Here,  
	\[
	s_2 = \frac{2-\sigma}{2} +\frac{\delta}{\nu}<s-2\lambda
	\]
	whose inequality holds true under condition \eqref{s_condition1}.
	Consequently, we have
	\[
	\|g\|_{W^{s, p}}\lesssim \|\alpha\|_X \|\alpha\|_{W^{s, p}}\lesssim (1+t)^{-\beta_{\sigma}}\|\alpha\|_X^2
	\]	 
	provided that conditions \eqref{s_condition1} and \eqref{s_condition2} are satisfied with $\lambda=7/4+3\sigma/2$. 
	
	$\bullet$ Estimates for $g(0)$. 
	Let $A$ be defined as in \eqref{set_A}, and we use Lemma \ref{GNT_estimate_product} to estimate
	\[
	\begin{aligned}
		\|g(0)\|_{W^{s+6(\frac{1}{2}-\frac{1}{p}), p'}}
		&\sim \lt\| \mathcal{F}^{-1} \lt(  \lal\xi \ral^{s+6(\frac{1}{2}-\frac{1}{p})}(D_1^{\frac{\sigma}{2}}+D_2^{\frac{\sigma}{2}}) \int_{\R^3} \mathfrak{M} \widehat{A}(\xi-\eta)\widehat{A}(\eta)\, d\eta \rt)\rt\|_{L^{p'}}\\
		&\lesssim \|A\|_{H^{s+6(\frac{1}{2}-\frac{1}{p})+\frac{\sigma}{2}}} \|A\|_{L^{\frac{1}{\frac{1}{2}-\frac{1}{p}+\frac{\delta}{3}}}}\\
		&\lesssim \|\alpha\|_{\dot{H}^{-\frac{2-\sigma}{2}}\cap H^{s+2\lambda+6(\frac{1}{2}-\frac{1}{p})+\frac{\sigma}{2}}} \|\alpha\|_{\dot{H}^{-\frac{2-\sigma}{2}}\cap H^{2\lambda+\frac{3}{p}-\delta}}\cr
		&\lesssim \|\alpha\|_{X}^2,
	\end{aligned}
	\]
	where $D_1^{\frac{\sigma}{2}}=(1+|\xi|^{\sigma})^{1/2}$ and $D_2^{\frac{\sigma}{2}}=|\xi|^{\frac{\sigma}{2}}$. 
	
	Combining all of the above estimates concludes the desired result.
\end{proof}

%
%
%
%
%
%
%
%
%

\section{$M_{\xi, \eta}^{k, \infty} \cap M_{\eta, \xi}^{k, \infty}$ estimates of the kernel $\mathfrak{M}$}\label{sec_4}
Throughout this section, our analysis revolves around a phase function denoted as $\Phi(\xi, \eta):= p(|\xi|)-p(|\xi-\eta|)-p(|\eta|)$, where 
$p(r)=(r^2+r^{2-\sigma})^{1/2}$ for $0<  \sigma<2$.
We carefully investigate the charateristics of the phase function $\Phi$ and demonstrate that the kernel $\mathfrak{M}_{r,l}$ belongs to the function spaces $M_{\xi, \eta}^{k, \infty}$ and $M_{\eta, \xi}^{k, \infty}$. 

%
%
%
%
%
%
%
%
%

\subsection{Estimates of phase function $\Phi$}

To facilitate the analysis, we introduce the following lemma, a useful tool for obtaining lower bounds of $\Phi$.
\begin{lemma}\label{Lemma_low_Phi}
	Let $0< \sigma<2$, $a=b+c \in \mathbb{R}^3$, and $|c| \leq \min \{|a|, |b|\}$.
	Then, we have 
	\bq\label{low_Phi_1}
	|p(a)-p(b)-p(c)| \gtrsim 
	\frac{|c|}{1+(|a||c|)^{\sigma}} +|c| (1-\cos [a, c]+1-\cos[a, b])
	\eq
	for $|b|\ge 1$.
	Moreover, we obtain
	\bq\label{low_Phi_2}
	|p(a)-p(b)-p(c)| \gtrsim \frac{|c|^{(2-\sigma)/2}}{(1+|b|^{\sigma})^{1/2}}
	\eq
	for $|c|<1$. Here, $[a, b]$ represents the angle between two vectors $a$ and $b$.
\end{lemma}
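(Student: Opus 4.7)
The plan is to reduce $\Phi := p(|a|) - p(|b|) - p(|c|)$ to a sum of a geometric piece and an analytic piece, and then estimate each via case analysis on the frequency regime of $p(r)=(r^2+r^{2-\sigma})^{1/2}$. Writing $\alpha=|a|$, $\beta=|b|$, $\gamma=|c|$, I first record the three law-of-cosines identities
\begin{equation*}
\beta+\gamma-\alpha = \frac{2\beta\gamma(1-\cos[b,c])}{\alpha+\beta+\gamma},\qquad 1-\cos[a,c] = \frac{(\beta+\gamma-\alpha)(\alpha+\beta-\gamma)}{2\alpha\gamma},
\end{equation*}
together with the analogue for $1-\cos[a,b]$. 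Setting $s:=\beta+\gamma-\alpha\ge 0$, these identities express the angular terms on the right-hand side of \eqref{low_Phi_1} as multiples of $s$; since $\gamma\le \min\{\alpha,\beta\}$ and $\alpha\le\beta+\gamma$, one always has $\alpha+\beta+\gamma\sim\beta$, so that $\gamma(1-\cos[a,c])+\gamma(1-\cos[a,b])\lesssim s$.

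For the analytic piece I would split $p(r)=r+q(r)$, where $q(r)=r^{1-\sigma}/(\sqrt{1+r^{-\sigma}}+1)$ satisfies $q(r)\sim r^{1-\sigma}$ for $r\ge 1$ and $q(r)\sim r^{1-\sigma/2}$ for $r<1$. Then
\begin{equation*}
\Phi = -s + \bigl(q(\alpha)-q(\beta)-q(\gamma)\bigr),
\end{equation*}
so \eqref{low_Phi_1} (the regime $\beta\ge 1$) amounts to showing $|\Phi|\gtrsim s+\gamma/(1+(\alpha\gamma)^\sigma)$. The three ingredients are: (i) the mean value theorem gives $|q(\alpha)-q(\beta)|\lesssim |q'(\xi)|\,|\alpha-\beta|\lesssim \beta^{-\sigma}\gamma$, which is a lower-order correction to $q(\gamma)$; (ii) a direct computation using $\gamma\le\alpha\le 2\beta$ yields $q(\gamma)\gtrsim \gamma/(1+(\alpha\gamma)^\sigma)$ in both the low- and high-frequency regimes of $\gamma$; (iii) the two surviving contributions $-s$ and $-q(\gamma)$ have the same sign, so they add rather than cancel, and summing the three gives the claimed lower bound.

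For \eqref{low_Phi_2} I would split on $\beta\le 1$ versus $\beta>1$. When $\beta\le 1$, all three magnitudes lie in the low-frequency regime where $p(r)\sim r^{1-\sigma/2}$ and $p$ is subadditive (by concavity on $[0,r_0]$ for $\sigma\le 1$ and a direct squared comparison using $p(\beta)p(\gamma)\ge\beta\gamma$ otherwise). Combining $\alpha\le\beta+\gamma$ with the elementary inequality $1+t^{1-\sigma/2}-(1+t)^{1-\sigma/2}\gtrsim t^{1-\sigma/2}$ for $t=\gamma/\beta\in(0,1]$, verified by Taylor expansion near $t=0$ and direct checking at $t=1$, gives $|\Phi|\ge p(\beta)+p(\gamma)-p(\beta+\gamma)\gtrsim \gamma^{1-\sigma/2}$, which matches the claim since $(1+\beta^\sigma)^{1/2}\sim 1$. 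When $\beta>1$, since $p'(r)\lesssim 1$ for $r\ge 1$ one has $|p(\alpha)-p(\beta)|\lesssim \gamma$, which for $\gamma<1$ is dominated by $p(\gamma)\sim\gamma^{1-\sigma/2}$, so that $|\Phi|\gtrsim \gamma^{1-\sigma/2}\ge \gamma^{(2-\sigma)/2}/\beta^{\sigma/2}$.

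The main obstacle is the bookkeeping in \eqref{low_Phi_1}: several subcases arise depending on whether $\gamma$ and $|\alpha-\beta|$ are above or below unity, and one must carefully rule out cancellation between $-s$ and $q(\alpha)-q(\beta)-q(\gamma)$. The approach above handles this by observing that $s$ and $q(\gamma)$ share a sign, while the cross difference $q(\alpha)-q(\beta)$ is smaller by an additional factor of $\beta^{-\sigma}$ coming from $q'$, so that no cancellation between the geometric and dispersive contributions can occur.
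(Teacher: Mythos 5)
Your decomposition $p(r)=r+q(r)$, with the ensuing identity $-\Phi = s + q(|b|)+q(|c|)-q(|a|)$, is a genuinely different route from the paper's. The paper instead factors $p(r)=r\,Q(r)$ with $Q(r)=(1+r^{-\sigma})^{1/2}$ and uses the identity
\[
p(b)+p(c)-p(a)=\big\{|b|(1-\cos[a,b])+|c|(1-\cos[a,c])\big\}Q(a)+|b|\big(Q(b)-Q(a)\big)+|c|\big(Q(c)-Q(a)\big),
\]
in which every term on the right is manifestly non-negative because $Q$ is decreasing; the lower bounds then follow by isolating the term that survives in each angular regime (splitting on $\cos[a,c]\lessgtr 9/10$ for \eqref{low_Phi_1}, on $\cos[a,b]\lessgtr 9/10$ and on the size of $|c|^\sigma$ relative to $|b|^\sigma$ for \eqref{low_Phi_2}). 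The structural advantage of the multiplicative decomposition is precisely that no cancellation can occur.

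Your additive decomposition does not enjoy that property, and this is where the gap lies. The function $q=p-\mathrm{id}$ is not monotone in general (its derivative $p'-1$ changes sign once $\sigma>1$), and even when it is increasing the combination $q(|b|)+q(|c|)-q(|a|)$ can come close to vanishing. Your step (iii) asserts that $|q(\alpha)-q(\beta)|$ is ``smaller by an additional factor of $\beta^{-\sigma}$'' than $q(\gamma)$, so no cancellation occurs — but this is false as a scaling statement: when $\gamma\sim\beta$ one has $q(\gamma)\sim\beta^{1-\sigma}$ and $|q(\alpha)-q(\beta)|\lesssim\beta^{-\sigma}|\alpha-\beta|\leq\beta^{-\sigma}\gamma\sim\beta^{1-\sigma}$, i.e.\ the two quantities are of the \emph{same} order, not separated by a power of $\beta$. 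Whether the difference survives therefore hinges on an explicit comparison of constants: one must check that $\sup_{r\geq 1}r^\sigma|q'(r)|$ (which behaves like $|1-\sigma|/2$) is strictly below $\inf_{\gamma\geq 1}\gamma^{\sigma-1}q(\gamma)$ (which is $1/(1+\sqrt 2)$), and this comparison degenerates as $\sigma\to 0$ — consistent with the fact that the lemma fails outright at $\sigma=0$, and with the $\sigma$-dependent implicit constant. A similar unverified constant comparison appears in your treatment of \eqref{low_Phi_2} for $\beta>1$: from $|p(\alpha)-p(\beta)|\leq C\gamma$ with $C=\sup_{r\geq 1}p'(r)=p'(1)>1$, the leftover $p(\gamma)-C\gamma$ can be small when $\gamma$ is near $1$, and asserting it is ``dominated'' by $p(\gamma)\sim\gamma^{1-\sigma/2}$ skips the check that the surviving constant is uniform and positive. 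These are exactly the points the paper's $Q$-decomposition was designed to avoid; your route can be made rigorous, but the ``no cancellation'' claim is the heart of the proof and must be proved, not asserted.
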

\begin{proof}
Note that the case $|a|\leq |b|$ implies $p(a)\leq p(b)$. 
	Since $p(r)$ is a non-negative and increasing function (e.g. see \eqref{deriv_P_first}),
 it follows that
	\[
	|p(a)-p(b)-p(c)| \ge p(c) = (|c|^2 +|c|^{2-\sigma})^{1/2} 
	\]
	which implies the desired inequalities.

	We now focus on the case $|a| > |b|$. Consider the function
\[
	Q(r)=\frac{p(r)}{r}=\lt(\frac{1+r^{\sigma}}{r^{\sigma}}\rt)^{1/2}.
\]
	We observe that $Q$ is a decreasing and non-negative function with properties such as
	\[
	Q(r)>1, \qquad Q'(r) = - \frac{\sigma}{2r^{\sigma/2+1} (1+r^{\sigma})^{1/2}} \quad \text{for any}\quad r\ge0,
	\] 
	\[
	Q'(1) = -\frac{\sigma}{2\sqrt{2}} ,\quad \text{and} \quad Q'(r)=-\frac{\sigma}{2r^{\sigma+1}} \quad \text{as} \quad r\rightarrow \infty,
	\]
where $Q'(r)$ is an increasing function.
	Utilizing the relation $|a|=|b| \cos[a, b]+|c| \cos[a, c]$, 
	we write
	\bq\label{represent_Phi}
	\begin{aligned}
		p(b)+p(c)-p(a)
		&=(|b|+|c|-|a|)Q(a)+|b|(Q(b)-Q(a))+|c|(Q(c)-Q(a))\\
		&= \lt\{|b|(1-\cos[a, b])+|c|(1-\cos[a, c]) \rt\} Q(a)\\
		& \quad +|b|(Q(b)-Q(a))+|c|(Q(c)-Q(a)).
	\end{aligned}
	\eq
	It is noteworthy that each term on the right-hand side above is positive since $Q$ is a decreasing function.
	
	To prove \eqref{low_Phi_1}, we first assume $\cos[a, c]\leq 9/10$.
	Then, \eqref{represent_Phi} implies  
	\bq\label{low_phi_1}
	\begin{aligned}
		p(b)+p(c)-p(a) 
		&= |b| (Q(b)-\cos[a, b]Q(a)) +|c|(Q(c)-\cos[a, c]Q(a)) \\
		&\ge|c|(Q(c)-\cos[a, c]Q(a))\\
		&\ge \frac{|c|Q(c)}{10}+\frac{9|c|}{10}(Q(c)-Q(a))\\
		&\ge \frac{p(c)}{10}
	\end{aligned}
	\eq
	due to the decreasing property of $Q(r)$. Thus, it is clear to deduce \eqref{low_Phi_1} when $\cos[a, c]\leq 9/10$.
	 
	Meanwhile, it is observed that $|c|\leq |b|$ implies $\cos [a, c]\leq \cos[a, b]$, leading to $|a| \ge 2 |c|\cos[a, c]$. When $\cos[a, c]> 9/10$, it follows that
	$$|a| \ge \frac{4|c|}{3}.$$ 
	For $1 < |a|<\delta^{-1}$ for $\delta>0$ small enough, we have 
	\[
	|a|\ge \frac{|a|}{4}+|c| \ge \frac{1}{4}+|c|. 
	\]
	Consequently,
	\[
	Q(c)-Q(a)=-\int_{|c|}^{|a|} Q'(r) \,dr \ge -Q'(\delta^{-1}) \int_{|c|}^{|a|} \,dr \gtrsim \delta^{\sigma+1} \int_{|c|}^{|c|+\frac{1}{4}} \,dr \gtrsim 1,
	\]
	yielding
	\bq\label{b>1_1}
	|c|(Q(c)-Q(a)) \gtrsim |c| \gtrsim \frac{|c|}{1+|a|^{\sigma}|c|^{\sigma}}.
	\eq
For $|a| \ge\delta^{-1}$, this can be simplified to $|c|> \delta^{-1}$ since $-Q'(r)$ is decreasing. Subsequently,
	\[
	Q(c)-Q(a)=-\int_{|c|}^{|a|} Q'(r)\,dr  \sim \int_{|c|}^{|a|} \frac{\sigma}{r^{\sigma+1}}\,dr 
	=\frac{|a|^{\sigma}-|c|^{\sigma}}{|a|^{\sigma}|c|^{\sigma}} \gtrsim  \frac{|a|^{\sigma}}{1+|a|^{\sigma}|c|^{\sigma}}
	\]
	due to $|a| \ge 4|c|/3$ and $|a|> 1$. This yields
	\bq\label{b>1_2}
	|c|(Q(c)-Q(a)) \gtrsim \frac{|c|}{1+|a|^{\sigma}|c|^{\sigma}}.
	\eq
By combining \eqref{b>1_1} and \eqref{b>1_2} along with \eqref{represent_Phi}, we conclude \eqref{low_Phi_1} for $\cos [a,c]>9/10$.
	
Next, we aim to demonstrate \eqref{low_Phi_2}. Similar to the above, it suffices to deal with the case $|b|<|a|$. By utilizing \eqref{low_phi_1}, the case $\cos [a, b] \leq 9/10$ can be easily handled as follows:
	\[
	p(b)+p(c)-p(a) \ge \frac{p(c)}{10}  \gtrsim \frac{|c|^{(2-\sigma)/2}}{(1+|b|^{\sigma})^{1/2}}. 
	\]
When $\cos[a, b] >9/10$, we find
	\bq\label{c<1_1}
	\begin{aligned}
		p(b)+p(c)-p(a) 
		&\ge |b|Q(b) +|c|Q(c)-|a|Q(b)\\
		&=(|b|+|c|-|a|)Q(b)	+|c|(Q(c)-Q(b))\\
		& \ge |c|(Q(c)-Q(b))
	\end{aligned}
	\eq
	due to the monotonicity of $Q(r)$ and $|a|\leq |b|+|c|$.
	If $|c|^{\sigma}<3|b|^{\sigma}/4$, direct computation gives  
	\bq\label{c<1_2}
	\begin{aligned}
		Q(c)-Q(b)
		&=\frac{(1+|c|^{\sigma})^{1/2}}{|c|^{\sigma/2}}-\frac{(1+|b|^{\sigma})^{1/2}}{|b|^{\sigma/2}} \\
		&= \frac{|b|^{\sigma/2}(1+|c|^{\sigma})^{1/2}-|c|^{\sigma/2}(1+|b|^{\sigma})^{1/2}}{|b|^{\sigma/2}|c|^{\sigma/2}}\\
		&= \frac{|b|^{\sigma}-|c|^{\sigma}}{|b|^{\sigma/2}|c|^{\sigma/2}\lt(|b|^{\sigma/2}(1+|c|^{\sigma})^{1/2}+|c|^{\sigma/2}(1+|b|^{\sigma})^{1/2}\rt)}\\
		&> \frac{|b|^{\sigma/2}}{4|c|^{\sigma/2}\lt(|b|^{\sigma/2}(1+|c|^{\sigma})^{1/2}+|c|^{\sigma/2}(1+|b|^{\sigma})^{1/2}\rt)}
	\end{aligned}
	\eq
	which implies
	\[
	|c| (Q(c)-Q(b))  
	\gtrsim \frac{|c|^{(2-\sigma)/2}}{ (1+|b|^{\sigma})^{1/2}}
	\]
	whenever $|c|<1$.
	Thus, by combining \eqref{c<1_1} and \eqref{c<1_2}, we derive \eqref{low_Phi_2}. 
	
Moreover, for $|c|^{\sigma}\ge 3|b|^{\sigma}/4$,  we have
\[
|a| \leq |b|+|c|\leq \Big(1+\Big(\frac{4}{3}\Big)^{\frac{1}{\sigma}} \Big)|c| =: \nu |c|.
\]
This leads to
	\[
	\begin{aligned}
		Q(b)-Q(a)= - \int_{|b|}^{|a|} Q'(r)\,dr \gtrsim -Q'\lt(\nu |c|\rt) \int_{|b|}^{\frac{4|b|}{3}}\, dr = -\frac{|b|}{3}Q'\lt( \nu |c|\rt) 
	\end{aligned}
	\]
	since $|a|\ge 4|b|/3$ when $\cos[a, b] > 9/10$. Consequently,
	\[
	|b|(Q(b)-Q(a)) =-\frac{|b|^2}{3}Q'\lt( \nu |c| \rt) =\frac{\sigma|b|^2}{6  \nu^{1+\sigma/2}|c|^{1+\sigma/2} (\nu^{\sigma}|c|^{\sigma}+1)^{1/2}} \gtrsim \frac{|c|^{(2-\sigma)/2}}{(1+|b|^{\sigma})^{1/2}}
	\]
	for $|b|\ge |c|$. By combining this with \eqref{represent_Phi}, we conclude \eqref{low_Phi_2} as desired.
\end{proof}

The following lemma presents useful estimates involving the first-order and second-order derivatives of $\Phi$.
\begin{lemma}\label{Lemma_deriv_Phi}
	Let $0< \sigma<2$. The following estimates hold;
	\[
	\begin{aligned}
		|\nabla_{\xi} \Phi| &\lesssim 
		\begin{cases}
			  |\eta|+2 \sin \frac{[\xi, \xi-\eta]}{2} \hspace{144pt} \text{if } \displaystyle \min\{|\xi|, |\xi-\eta|\}\ge 1\\
			  \frac{1}{\min\{|\xi|, |\xi-\eta| \}^{\sigma/2}} + 2\max\lt\{1, \frac{1}{|\xi-\eta|^{\sigma/2}}\rt\} \sin \frac{[\xi, \xi-\eta]}{2}  \quad  \text{if } \min\{|\xi|, |\xi-\eta|\}< 1,
		\end{cases}\\
		|\nabla_{\eta} \Phi| &\lesssim 
		\begin{cases}
			|\xi|+2 \sin \frac{[\eta, \xi-\eta]}{2} \hspace{144pt}  \text{if } \min\{|\eta|, |\xi-\eta|\}\ge 1\\
			\frac{1}{\min\{|\eta|, |\xi-\eta| \}^{\sigma/2}} + 2\max\lt\{1, \frac{1}{|\xi-\eta|^{\sigma/2}}\rt\} \sin \frac{[\eta, \xi-\eta]}{2} \quad \text{if } \min\{|\eta|, |\xi-\eta|\}< 1,
		\end{cases}\\
		|\Delta_{\xi} \Phi| &\lesssim 
		\begin{cases}
			|\eta| \hspace{85pt} \text{if } \min\{|\xi|, |\xi-\eta|\}\ge 1\\
			\frac{1}{\min\{|\xi|, |\xi-\eta|\}^{1+\sigma/2}}  
			\qquad \text{if } \min\{|\xi|, |\xi-\eta|\}< 1,\\
		\end{cases}\\
		|\Delta_{\eta} \Phi| &\lesssim 
		\begin{cases}
			|\xi|\hspace{85pt} \text{if } \min\{|\eta|, |\xi-\eta|\}\ge 1\\
			\frac{1}{\min\{|\eta|, |\xi-\eta|\}^{1+\sigma/2}}  
			\qquad \text{if } \min\{|\eta|, |\xi-\eta|\}< 1.\\
		\end{cases}
	\end{aligned}
	\]
	In addition, we have 
	\bq\label{replacement_Phi}
	\begin{aligned}
		&|\nabla_{\xi} \Phi| \lesssim 	\frac{|\eta|}{\min\{|\xi|, |\xi-\eta|\}^{1+\sigma/2}} + 2\max\lt\{1, \frac{1}{|\xi-\eta|^{\sigma/2}}\rt\} \sin \frac{[\xi, \xi-\eta]}{2}, \\
		&|\Delta_{\xi} \Phi| \lesssim \frac{|\eta|}{\min\{|\xi|, |\xi-\eta|\}^{2+\sigma/2}}+\frac{|\eta|^{1+\sigma/2}}{\min\{|\xi|, |\xi-\eta|\}^{2+\sigma}}
	\end{aligned}
	\eq
	for $\min\{|\xi|, |\xi-\eta|\}<1$.
\end{lemma}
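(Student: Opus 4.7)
My strategy is direct computation combined with the asymptotic behavior of $p'$, $p''$ recorded in \eqref{behavior_P1}--\eqref{behavior_P2}. Writing $M:=\min\{|\xi|,|\xi-\eta|\}$ for short, the starting point is the identity
\[
\nabla_\xi \Phi = \bigl(p'(|\xi|) - p'(|\xi-\eta|)\bigr)\frac{\xi}{|\xi|} + p'(|\xi-\eta|)\left(\frac{\xi}{|\xi|} - \frac{\xi-\eta}{|\xi-\eta|}\right),
\]
the analogous decomposition for $\nabla_\eta \Phi$ (obtained by pairing $\eta/|\eta|$ with $(\xi-\eta)/|\xi-\eta|$), and the scalar identities $\Delta_\xi \Phi = h(|\xi|) - h(|\xi-\eta|)$ and $\Delta_\eta \Phi = h(|\xi-\eta|) - h(|\eta|)$, where $h(r) := p''(r) + 2p'(r)/r$. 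The angular factor equals $2\sin([\xi,\xi-\eta]/2)$, and its coefficient $|p'(|\xi-\eta|)|$ is controlled via \eqref{behavior_P1}--\eqref{behavior_P2} by $\max\{1,|\xi-\eta|^{-\sigma/2}\}$, which is exactly the form appearing in the statement.

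For the high-frequency regime $M\geq 1$, the derivatives $p'$, $p''$, $p'''$ are uniformly bounded on $[1,\infty)$ (so $h'$ is bounded there), and the mean value theorem gives $|p'(|\xi|)-p'(|\xi-\eta|)| \lesssim |\eta|$ and $|h(|\xi|)-h(|\xi-\eta|)| \lesssim |\eta|$ at once, delivering the claimed estimates for $\nabla_\xi\Phi$ and $\Delta_\xi\Phi$. For $\nabla_\eta\Phi$ and $\Delta_\eta\Phi$ the analogous argument produces the bound $|\xi|$ via $||\xi-\eta|-|\eta|| \leq |\xi|$.

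For the low-frequency regime $M<1$, the singular asymptotics $p'(r)\sim r^{-\sigma/2}$, $|p''(r)|\sim r^{-1-\sigma/2}$, and consequently $|h'(r)|\lesssim r^{-2-\sigma/2}$ near the origin yield two complementary bounds on the radial difference. The triangle inequality gives $|p'(|\xi|)|+|p'(|\xi-\eta|)| \lesssim M^{-\sigma/2}$, producing the stated bound on $\nabla_\xi\Phi$, while MVT gives the sharper $|\eta|M^{-1-\sigma/2}$ needed in \eqref{replacement_Phi}. For the simple bound on $\Delta_\xi\Phi$, the identity $h(|\xi|)-h(|\xi-\eta|)=\int_{|\xi-\eta|}^{|\xi|} h'(s)\,ds$ combined with the pointwise estimate on $h'$ directly yields $\lesssim M^{-1-\sigma/2}$. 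In every case the same $p'(|\xi-\eta|)$ bound supplies the angular coefficient.

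The principal obstacle is the refined estimate for $\Delta_\xi\Phi$ in \eqref{replacement_Phi}. The naive MVT bound $|\eta|M^{-2-\sigma/2}$ is sharp only when $|\eta|\leq M$, while the integration bound $M^{-1-\sigma/2}$ is sharp only when $|\eta|>M$, so neither single estimate produces both terms. I would therefore dichotomize according to whether $\max\{|\xi|,|\xi-\eta|\}\leq 2M$ or $>2M$. In the first case, $s^{-2-\sigma/2}$ is comparable to $M^{-2-\sigma/2}$ on the interval of integration, so the integral is bounded by $(\max-M)M^{-2-\sigma/2}\leq |\eta|/M^{2+\sigma/2}$, giving the first term. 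In the second case, the reverse triangle inequality yields $|\eta|\geq \max-M>M$, and the integration bound $M^{-1-\sigma/2}$ is rewritten as $M^{1+\sigma/2}/M^{2+\sigma} \leq |\eta|^{1+\sigma/2}/M^{2+\sigma}$, giving the second term. Summing the two cases concludes \eqref{replacement_Phi}.
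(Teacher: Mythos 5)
Your treatment of the $\xi$-derivatives is correct and takes a route that is equivalent to the paper's but slightly tidier. For $\nabla_\xi\Phi$ the decomposition is identical. For $\Delta_\xi\Phi$ you package the radial Laplacian as $h(r)=p''(r)+2p'(r)/r$ and estimate $h(|\xi|)-h(|\xi-\eta|)$ by integrating $h'$, whereas the paper expands $|\Delta_\xi\Phi|$ into the three cross-differences $I+II+III$ in \eqref{Delta_Phi} and bounds each separately; the two are the same computation organized differently (your approach needs the $p'''$ bound, which \eqref{behavior_P1}--\eqref{behavior_P2} supply). Your dichotomy for \eqref{replacement_Phi}, by $\max\le 2M$ versus $\max>2M$, differs from the paper's split by $\max\{|\xi|,|\xi-\eta|\}<1$ versus $\ge 1$, but both are valid and yield the same two-term bound. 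One small point: for $\nabla_\eta\Phi$ the pairing should put $p'(|\xi-\eta|)$, not $p'(|\eta|)$, in front of the angular difference $\frac{\xi-\eta}{|\xi-\eta|}-\frac{\eta}{|\eta|}$ in order to reproduce exactly the coefficient $\max\{1,|\xi-\eta|^{-\sigma/2}\}$ that appears in the statement.

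There is, however, a genuine error in your $\eta$-Laplacian identity. With $\Phi(\xi,\eta)=p(|\xi|)-p(|\xi-\eta|)-p(|\eta|)$, which is the phase fixed at the start of Section~\ref{sec_4}, both $\eta$-dependent terms carry a minus sign, so
\[
\Delta_\eta\Phi \;=\; -\,h(|\xi-\eta|)\;-\;h(|\eta|),
\]
not $h(|\xi-\eta|)-h(|\eta|)$. The two summands now have the same sign, so there is no difference structure and the mean value argument you invoke is not available. This matters precisely for the high-frequency claim $|\Delta_\eta\Phi|\lesssim |\xi|$: taking, e.g., $\eta=(1,0,0)$ and $\xi=(0,\varepsilon,0)$ with $\varepsilon\to0$ gives $\min\{|\eta|,|\xi-\eta|\}\ge1$, yet $|\Delta_\eta\Phi|\to 2|h(1)|$, which is a fixed positive constant (indeed $h(1)=(3\sigma^2-12\sigma+32)/(8\sqrt2)>0$ for all $\sigma$), while $|\xi|=\varepsilon\to0$. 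So the bound as you state it does not follow from the formula you have, and cannot, without exploiting the geometry of the regimes in which the lemma is actually applied (where one has $|\xi-\eta|\sim|\xi|$ or $|\xi|\gtrsim1$, so that the crude estimate $|h(|\xi-\eta|)|+|h(|\eta|)|\lesssim |\xi-\eta|^{-1}+|\eta|^{-1}$ can be compared to $|\xi|$). The paper itself proves only the $\xi$-derivative bounds and asserts the $\eta$-ones "can be obtained in a similar manner," so this is a gap you inherited rather than created, but your proposal as written would not close it.
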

\begin{proof}
	We observe that 
	\[
	\begin{aligned}
		|\nabla_{\xi} \Phi |
		&= \lt|p'(\xi) \frac{\xi}{|\xi|}-p'(\xi-\eta) \frac{\xi-\eta}{|\xi-\eta|}\rt|\\ 
		&= \lt|p'(\xi) \frac{\xi}{|\xi|}-p'(\xi-\eta)\frac{\xi}{|\xi|} + p'(\xi-\eta) \frac{\xi}{|\xi|}-p'(\xi-\eta) \frac{\xi-\eta}{|\xi-\eta|}\rt|	\\
		&\leq \lt|p'(\xi) -p'(\xi-\eta)\rt| + | p'(\xi-\eta)| \lt| \frac{\xi}{|\xi|}- \frac{\xi-\eta}{|\xi-\eta|}\rt|.
	\end{aligned}
	\]
	If $\min\{ |\xi|, |\xi-\eta|\}\ge1$, it follows that
	\[
	\begin{aligned}
		\lt|p'(\xi) -p'(\xi-\eta)\rt| 
		\leq \int_{\min\{|\xi|, |\xi-\eta|\}}^{\max\{|\xi|, |\xi-\eta|\}} |p''(r)| \,dr \lesssim \int_{\min\{|\xi|, |\xi-\eta|\}}^{\max\{|\xi|, |\xi-\eta|\}} \frac{1}{r}\,dr 
		 \lesssim \frac{||\xi|-|\xi-\eta||}{\min\{|\xi|, |\xi-\eta|\}} \leq |\eta|
	\end{aligned}
	\]
	by utilizing \eqref{behavior_P1}. 
	When $\min\{|\xi|, |\xi-\eta| \}<1$, we get 
\[
\begin{aligned}
	|p'(\xi)-p'(\xi-\eta)| 
	&\leq  \lt|p'(\min\{|\xi|, |\xi-\eta|\})\rt|+\lt|p'(\max\{|\xi|, |\xi-\eta|\})\rt|\\
	&\lesssim \frac{1}{\min\{|\xi|, |\xi-\eta|\}^{\sigma/2}} +\max\lt\{ 1, \frac{1}{\max\{|\xi|, |\xi-\eta|\}^{\sigma/2}}\rt\}\\
	&\lesssim \frac{1}{\min \{|\xi|, |\xi-\eta|\}^{\sigma/2}}
\end{aligned}
\]
by using \eqref{behavior_P1} and \eqref{behavior_P2}. Additionally, we have
	\bq\label{note_8}
	|p'(\xi-\eta)| \lesssim \max\lt\{1, \frac{1}{|\xi-\eta|^{\sigma/2}}\rt\}. 
	\eq
Since 
	\bq\label{note_9}
	\lt|\frac{\xi}{|\xi|}-\frac{\xi-\eta}{|\xi-\eta|}\rt| =2 \sin \frac{[\xi, \xi-\eta]}{2},
	\eq
we deduce
	\[
	|\nabla_{\xi} \Phi| \lesssim 
	\begin{cases}
		|\eta|+2 \sin \frac{[\xi, \xi-\eta]}{2} \hspace{138pt} \text{if } \min\{|\xi|, |\xi-\eta|\}\ge 1,\\
		\frac{1}{\min\{|\xi|, |\xi-\eta| \}^{\sigma/2}} + 2 \max\{1, \frac{1}{|\xi-\eta|^{\sigma/2}}\} \sin \frac{[\xi, \xi-\eta]}{2}
		\quad \text{if } \min\{|\xi|, |\xi-\eta|\}<1.\\
	\end{cases}
	\]
	
	Next, we observe that
	\bq\label{Delta_Phi}
	\begin{aligned}
		|\Delta_{\xi}\Phi|
		&= |\Delta_{\xi} p(\xi)-\Delta_{\xi}p(\xi-\eta)|\\
		&\leq |p''(\xi)-p''(\xi-\eta)|+2\lt|\frac{p'(\xi)}{|\xi|}-\frac{p'(\xi-\eta)}{|\xi-\eta|} \rt|	\\
		&\lesssim |p''(\xi)-p''(\xi-\eta)|+\frac{1}{|\xi|} |p'(\xi)-p'(\xi-\eta)| +\lt| \frac{1}{|\xi|}-\frac{1}{|\xi-\eta|}\rt| |p'(\xi-\eta)|\\
		&=:I+II+III.
	\end{aligned}
	\eq
	If $\min\{|\xi|, |\xi-\eta|\} \ge 1$, a similar argument as before gives
	\[
	I \lesssim  \int^{\max\{|\xi|, |\xi-\eta|\}}_{\min \{|\xi|, |\xi-\eta|\}} \frac{1}{r^2}\, dr  \lesssim \frac{||\xi|-|\xi-\eta||}{|\xi| |\xi-\eta|} \lesssim \frac{|\eta|}{\min\{|\xi|, |\xi-\eta|\}^2}, \quad II\lesssim \frac{|\eta|}{|\xi|} \lesssim |\eta|,
	\]
and
\[
III\lesssim \lt| \frac{1}{|\xi|}-\frac{1}{|\xi-\eta|}\rt| |p'(\xi-\eta)|\leq \frac{||\xi|-|\xi-\eta||}{|\xi||\xi-\eta|} \leq |\eta|
\]
leading to 	the desired result for $\min\{|\xi|, |\xi-\eta|\}\ge1$.

We now consider the case $\min\{|\xi|, |\xi-\eta|\}<1$.  It is clear to get
	\[
	\begin{aligned}
		I&\lesssim \lt| p''(\min\{|\xi|, |\xi-\eta|\})\rt| +\lt|p''(\max\{|\xi|, |\xi-\eta|\}) \rt|\\
		&\lesssim \frac{1}{\min\{|\xi|, |\xi-\eta|\}^{1+\sigma/2} } + \frac{1}{\max\{|\xi|, |\xi-\eta|\}^{1+\sigma/2}}\\
		&\lesssim \frac{1}{\min\{|\xi|, |\xi-\eta|\}^{1+\sigma/2} },\\
		 II&\lesssim \frac{1}{|\xi|\min\{|\xi|, |\xi-\eta|\}^{\sigma/2} } \lesssim \frac{1}{ \min\{|\xi|, |\xi-\eta|\}^{1+\sigma/2} },
	\end{aligned}
	\]
	and
\[
		III \lesssim \frac{1}{\min\{|\xi|, |\xi-\eta|\} } |p'(\xi-\eta)| \lesssim \frac{1}{\min\{|\xi|, |\xi-\eta|\} } \max\{1, \frac{1}{|\xi-\eta|^{\sigma/2}}\}  \lesssim \frac{1}{\min\{|\xi|,|\xi-\eta|\}^{1+\sigma/2}}.
	\]
	Hence, we arrive at the following upper-bound:
	\[
	|\Delta_{\xi} \Phi| \lesssim 
	\frac{1}{\min\{|\xi|, |\xi-\eta|\}^{1+\sigma/2}} 
	\]
	for $\min\{|\xi|, |\xi-\eta|\}<1$.
	
	In order to derive \eqref{replacement_Phi}, if $\max\{|\xi|, |\xi-\eta|\}<1$, we find that
	\[
	\begin{aligned}
		\lt|p'(\xi) -p'(\xi-\eta)\rt| 
		&\lesssim \int_{\min\{|\xi|, |\xi-\eta|\}}^{\max\{|\xi|, |\xi-\eta|\}} \frac{1}{r^{1+\sigma/2}} \,dr  \leq \frac{||\xi|-|\xi-\eta||}{\min\{|\xi|, |\xi-\eta|\}^{1+\sigma/2}} \leq \frac{|\eta|}{\min\{|\xi|, |\xi-\eta|\}^{1+\sigma/2}}.
	\end{aligned}
	\]
	If $\max\{|\xi|, |\xi-\eta|\}\ge 1$, then
	\bq\label{note_1}
	\begin{aligned}
		\lt|p'(\xi) -p'(\xi-\eta)\rt| 
		&\lesssim \int_{\min\{|\xi|, |\xi-\eta|\}}^{1} \frac{1}{r^{1+\sigma/2}} \,dr +\int_{1}^{\max\{|\xi|, |\xi-\eta|\}} \frac{1}{r^{1+\sigma}}\, dr\\
		&\lesssim \frac{1-\min\{|\xi|, |\xi-\eta|\}}{\min\{|\xi|, |\xi-\eta|\}^{1+\sigma/2}} +\max\{|\xi|, |\xi-\eta|\}-1\\
		&\lesssim \frac{||\xi|-|\xi-\eta||}{\min\{|\xi|, |\xi-\eta|\}^{1+\sigma/2}}\\
		&\lesssim \frac{|\eta|}{\min\{|\xi|, |\xi-\eta|\}^{1+\sigma/2}}
	\end{aligned}
	\eq
	whenever $\min\{|\xi|, |\xi-\eta|\} <1$. By using this along with \eqref{note_8} and \eqref{note_9}, we deduce that 
	\[
	|\nabla_{\xi} \Phi| \lesssim 	\frac{|\eta|}{\min\{|\xi|, |\xi-\eta|\}^{1+\sigma/2}} + 2 \max\{1, \frac{1}{|\xi-\eta|^{\sigma/2}}\} \sin \frac{[\xi, \xi-\eta]}{2}
	\]
	for $\min\{|\xi|, |\xi-\eta|\}<1$.
	
	Meanwhile, recalling that 
	\[
	\begin{aligned}
		|\Delta_{\xi}\Phi|\lesssim I+II+III
	\end{aligned}
	\]
	from \eqref{Delta_Phi}, we estimate 
	\[
	I\lesssim \int_{\min\{|\xi|, |\xi-\eta|\}}^{\max\{|\xi|, |\xi-\eta|\}} \frac{1}{r^{2+\sigma/2}}\, dr 
	\lesssim \frac{||\xi|^{1+\sigma/2}-|\xi-\eta|^{1+\sigma/2}|}{|\xi|^{1+\sigma/2}|\xi-\eta|^{1+\sigma/2}}
	\lesssim \frac{|\eta|^{1+\sigma/2}}{\min\{|\xi|, |\xi-\eta|\}^{2+\sigma}}
	\]
	for $\max\{ |\xi|, |\xi-\eta|\} <1$, and
	\[
	\begin{aligned}
		I&\lesssim  \int_{\min\{|\xi|, |\xi-\eta|\}}^1 \frac{1}{r^{2+\sigma/2}} \,dr +\int_1^{\max\{|\xi|, |\xi-\eta|\}} \frac{1}{r^2} \,dr \\
		&\lesssim \frac{1-\min\{|\xi|, |\xi-\eta|\}}{\min\{|\xi|, |\xi-\eta|\}^{2+\sigma/2}} +\max\{|\xi|, |\xi-\eta|\}-1\\
		&\lesssim \frac{|\eta|}{\min\{|\xi|, |\xi-\eta|\}^{2+\sigma/2}}
	\end{aligned}
	\]
	similarly to \eqref{note_1} for $\max\{|\xi|, |\xi-\eta|\}\ge 1$.
Moreover, we find
	\[
	\begin{aligned}
		II& \lesssim \frac{|\eta|}{\min\{|\xi|, |\xi-\eta|\}^{1+\sigma/2}|\xi| } \lesssim \frac{|\eta|}{\min\{|\xi|, |\xi-\eta|\}^{2+\sigma/2} },\\ 
		III&\lesssim \frac{||\xi|-|\xi-\eta||}{|\xi||\xi-\eta|} \max\{1, \frac{1}{|\xi-\eta|^{\sigma/2}}\} \leq \frac{|\eta|}{\min\{|\xi|,|\xi-\eta|\}^{2+\sigma/2}}.
	\end{aligned}
	\]
	Combining these, we conclude that 
	\[
	|\Delta_{\xi}\Phi| \lesssim  \frac{|\eta|}{\min\{|\xi|, |\xi-\eta|\}^{2+\sigma/2}}+\frac{|\eta|^{1+\sigma/2}}{\min\{|\xi|, |\xi-\eta|\}^{2+\sigma}}.
	\]
It can be verified that similar upper bounds for the derivative with respect to $\eta$ can be obtained in a similar manner, thereby completing the proof.
\end{proof}

%
%
%
%
%
%
%
%
%

\subsection{Proof of Proposition \ref{Proposition_m}}
In this subsection, we provide the proof of Proposition \ref{Proposition_m} using Lemmas \ref{Lemma_low_Phi} and  \ref{Lemma_deriv_Phi}, dealing with $\Phi=\Phi_{1,1}$ and $\mathfrak{M}=\mathfrak{M}_{1,1}$.

It is worth noting that a similar argument can be applied to $\mathfrak{M}_{1,2}$ (respectively $\mathfrak{M}_{2,1}$) by interchanging $\xi$ and $\xi-\eta$ (respectively $\eta$), and it can be extended to $\mathfrak{M}_{2,2}$
since it can be easily verified that $\Phi_{2,2}=p(|\xi|)+p(|\xi-\eta|)+p(|\eta|)$ satisfies Lemmas \ref{Lemma_low_Phi} and \ref{Lemma_deriv_Phi}.

To demonstrate $\mathfrak{M}\in M_{\xi, \eta}^{k, \infty} \cap M_{\eta, \xi}^{k, \infty}$, we categorize the proof into three cases:
\begin{itemize}
	\item (Case A) $\{|\xi| \leq \frac{|\eta|}{2}\}$,\\[-3mm]
	\item (Case B) $\{|\eta| \leq \frac{|\xi|}{2}\}$,\\[-3mm]
	\item (Case C) $\{\frac13<\frac{|\xi|}{|\eta|}<3\}$.
\end{itemize}
Essentially, these three cases respectively correspond to situations where $|\xi-\eta|\sim |\eta|$, $|\xi-\eta|\sim |\xi|$, and $|\eta|\sim |\xi|$.

From Lemma \ref{Lemma_low_Phi}, we expect that $\Phi$ can be zero when $\min\{|\xi-\eta|, |\eta|\}=0$, leading to a singularity in $\mathfrak{M}$. 
Thus, it is crucial to investigate situations where $|\xi-\eta|$ or $|\eta|$ are small enough.
Utilizing symmetry, we streamline the proof by focusing on (Case B), where $|\eta|$ is the smallest among the three cases (proofs for the other cases are provided in Appendix \ref{appendix_1}).

Employing a slight abuse of notation, we set $\theta := [\xi, \eta], \gamma:=[\xi, \xi-\eta]$, and $\pi-\beta:=[\eta, \eta-\xi]$.
We observe the fundamental inequalities:
\bq\label{basic1}
\frac{|\xi-\eta|}{\sin \theta}=\frac{|\xi|}{\sin \beta} =\frac{|\eta|}{\sin \gamma}, \quad  1-\cos y =2 \sin^2 (\frac{y}{2})
\eq
for any $0\leq y \leq 2\pi$
and 
\bq\label{basic2}
\frac{x}{2} \leq \sin x \leq x ,\quad \frac{x^2}{4} \leq \sin^2 x \leq x^2 \quad \text{for } 0\leq x\leq \frac{\pi}{2}.
\eq

In (Case B), we note that $|\eta|$ is the smallest value and $|\xi-\eta|\sim |\xi|$. For $\mathfrak{M}\in M_{\eta, \xi}^{k, \infty}$, we further subdivide it into the following cases; 
\begin{itemize}
	\item (B1)  $\min\{|\xi|, |\xi-\eta|\}<1 $,\\[-4mm]
	\item (B2) $\min\{|\xi|, |\xi-\eta|\} \ge 1$.\\[-4mm]
\end{itemize}

In subcase (B1), we get 
\[
|\Phi| \gtrsim \frac{|\eta|^{(2-\sigma)/2}}{\lal \xi \ral^{\sigma/2}}
\] 
by applying \eqref{low_Phi_2} along with $|\xi|\sim |\xi-\eta|$.
By \eqref{replacement_Phi}, we obtain
\[
\begin{aligned}
	|\nabla_{\xi}\Phi|
	& \lesssim \frac{|\eta|}{| \xi|^{1+\sigma/2}}+ \max\{1, \frac{1}{|\xi-\eta|^{\sigma/2}}\} \sin \frac{\gamma}{2} \lesssim
	\frac{|\eta|}{|\xi|^{1+\sigma/2}},\\
	|\Delta_{\xi}\Phi| 
	&\lesssim \frac{|\eta|}{|\xi|^{2+\sigma/2}}+\frac{|\eta|^{1+\sigma/2}}{|\xi|^{2+\sigma}} =\frac{|\eta|}{|\xi|^{2+\sigma/2}} \lt( 1+\frac{|\eta|^{\sigma/2}}{|\xi|^{\sigma/2}}\rt)\lesssim \frac{|\eta|}{{|\xi|}^{2+\sigma/2}}
\end{aligned}
\]
since $|\eta| \leq |\xi| \sim |\xi-\eta|$. In the second inequality of the first line above, we used the fact that 
\bq\label{sin_gamma}
\sin \frac{\gamma}{2} \leq \frac{\sin \gamma}{2}=\frac{|\eta|}{2|\xi|} \sin \beta \leq \frac{|\eta|}{|\xi|}
\eq
due to $\gamma<\pi/2$. Thus, we have
\bq\label{note_6}
\begin{aligned}
	\lt|\nabla_{\xi} (\frac{1}{\Phi})\rt| =	&\lt|-\frac{\nabla_{\xi}\Phi}{\Phi^2}\rt| 
	\lesssim \frac{\lal \xi \ral^{\sigma}}{|\xi|^{1+\sigma/2}|\eta|^{1-\sigma}},\\
	\lt|\Delta_{\xi}(\frac{1}{\Phi}) \rt| 
	= &\lt|-\frac{\Delta_\xi \Phi}{\Phi^2}+\frac{2|\nabla_{\xi} \Phi|^2}{\Phi^3} \rt| 
	\lesssim  \frac{\lal \xi \ral^{\sigma}}{|\xi|^{2+\sigma/2}|\eta|^{1-\sigma}} \lt( 1+\frac{|\eta|^{\sigma/2}\lal\xi\ral^{\sigma/2}}{|\xi|^{\sigma/2}}\rt).
\end{aligned}
\eq
Let us consider 
\bq\label{M_g}
\mathfrak{M}(\xi, \eta)=\frac{|\xi|^{(2-\sigma)/2}|\xi-\eta|^{(2-\sigma)/2}|\eta|^{(2-\sigma)/2}}{\lal \xi-\eta\ral^{2 \lambda} \lal \eta\ral^{2 \lambda}\Phi(\xi, \eta)} = \frac{g(\xi, \eta)}{\Phi(\xi, \eta)},
\eq
where we find
\[
|\mathfrak{M}| \lesssim \frac{|\xi|^{2-\sigma} }{\lal \xi \ral^{2\lambda-\sigma/2} \lal\eta\ral^{2\lambda}}
\]
due to $|\xi-\eta| \sim |\xi|$. We express
\bq\label{Del_M}
\Delta_{\xi}\mathfrak{M} = g \Delta_{\xi}(\frac{1}{\Phi}) +2 \nabla_{\xi}g \cdot \nabla_{\xi}(\frac{1}{\Phi})  +\frac{\Delta_{\xi} g }{\Phi},
\eq
and using \eqref{note_6}, we deduce
\[
\begin{aligned}
	|\Delta_{\xi}\mathfrak{M} |
	\lesssim \frac{1}{|\xi|^{\sigma}\lal \xi\ral^{2\lambda} \lal \eta\ral^{2\lambda}} \lt( \frac{\lal \xi \ral^{3\sigma/2}|\eta|^{\sigma}}{|\xi|^{\sigma}} +\frac{\lal \xi\ral^{\sigma}|\eta|^{\sigma/2}}{|\xi|^{\sigma/2}}+\lal\xi \ral^{\sigma/2}\rt) 
	\lesssim \frac{\lal \xi\ral^{3\sigma/2}}{|\xi|^{\sigma}\lal \xi \ral^{2\lambda} \lal \eta \ral^{2\lambda}}
\end{aligned}
\]
given that $|\eta|\leq |\xi|$. This implies
\[
\begin{aligned}
	\int_{\R^3} |\mathfrak{M} \varphi ( \frac{\xi}{N}) |^2  \,d\xi &\lesssim \int_{\rho\sim N}  \frac{\rho^{6-2\sigma} }{\lal \rho \ral^{4\lambda-\sigma}\lal \eta \ral^{4\lambda} }  \,d\rho 
	\lesssim     \frac{N^{7-2\sigma}}{\lal N \ral^{4\lambda-\sigma}\lal \eta\ral^{4\lambda} },\\
	\int_{\R^3} |\Delta_{\xi}  \mathfrak{M} \varphi(\frac{\xi}{N}) |^2 \,d\xi 
	& \lesssim \int_{\rho \sim N} \frac{\rho^{2-2\sigma}}{ \lal \rho\ral^{4\lambda-3\sigma}\lal \eta \ral^{4\lambda}} \,d\rho 
	\lesssim   \frac{N^{3-2\sigma}}{ \lal N\ral^{4\lambda-3\sigma}\lal \eta \ral^{4\lambda}}  .
\end{aligned}
\]
Consequently, we have
\[
\begin{aligned}
	\|\mathfrak{M} \varphi_N \|_{\dot{H}_{\xi}^{k}} 
	&\lesssim   \lt(  \frac{N^{(7-2\sigma)/2}}{\lal N \ral^{2\lambda-\sigma/2}\lal \eta\ral^{2\lambda} }\rt)^{1-k/2}  \lt(  \frac{N^{(3-2\sigma)/2}}{ \lal N\ral^{2\lambda-3\sigma/2}\lal \eta \ral^{2\lambda} }  \rt)^{k/2} \\
	&= \frac{N^{7/2-\sigma-k}}{\lal N \ral^{2\lambda-\sigma/2-k(\sigma)/2}\lal \eta\ral^{2\lambda} }
	\lesssim  \frac{1}{\lal N \ral^{2\lambda-7/2+\sigma/2-k(\sigma-2)/2} }
\end{aligned}
\]
for $0\leq k\leq 3/2$.
It is clearly summable over $N$ if $\lambda> 7/4-\sigma/4 +k(\sigma-2)/4$, leading to $\mathfrak{M}\in M_{\eta, \xi}^{k, \infty}$ for $0\leq k\leq 3/2$.

In subcase (B2) where $\min\{|\xi|, |\xi-\eta|\} \ge 1$, we get
\[
|\Phi| \gtrsim |\eta|(\lal \xi\ral^{-2\sigma}+\theta^2),
\]
which is evidently obtained by \eqref{low_Phi_1}  
due to the second identity in \eqref{basic1} and the first inequality in \eqref{basic2}. 
We use Lemma \ref{Lemma_deriv_Phi} and \eqref{sin_gamma}
to deduce
\[
|\nabla_{\xi} \Phi| \lesssim |\eta| +\frac{|\eta|}{|\xi|} \lesssim |\eta| \quad \text{and}\quad |\Delta_{\xi} \Phi|\lesssim |\eta|. 
\]
Subsequently, it follows that
\[
\begin{aligned}
	|\nabla_{\xi} (\frac{1}{\Phi})| 
	\lesssim \frac{1}{|\eta|(\lal \xi \ral^{-2\sigma}+\theta^2)^2} \quad \text{and}\quad
	|\Delta_{\xi}(\frac{1}{\Phi})| 
	&\lesssim \frac{1}{|\eta|(\lal \xi \ral^{-2\sigma}+\theta^2)^3}.
\end{aligned}
\]
Given $\mathfrak{M}$ as defined in \eqref{M_g}, we obtain
\[
\begin{aligned}
	|\mathfrak{M}|&\lesssim \frac{|\xi|^{2-\sigma}}{|\eta|^{\sigma/2} \lal \xi\ral^{2\lambda}\lal \eta \ral^{2\lambda}(\lal \xi \ral^{-2\sigma}+\theta^2)} \lesssim\frac{|\xi|^{2-\sigma}}{ \lal \xi\ral^{2\lambda}\lal \eta \ral^{2\lambda}(\lal \xi \ral^{-2\sigma}+\theta^2)} , \\
	|\Delta_{\xi}\mathfrak{M} |
	&\lesssim 
	\frac{|\xi|^{2-\sigma}}{|\eta|^{\sigma/2}\lal \xi\ral^{2\lambda}\lal \eta\ral^{2\lambda}}  \lt( \frac{1}{(\lal \xi \ral^{-2\sigma}+\theta^2)^3} + \frac{1}{|\xi|(\lal \xi \ral^{-2\sigma}+\theta^2)^2} + \frac{1}{|\xi|^2(\lal \xi \ral^{-2\sigma}+\theta^2)}\rt) \\
	&\lesssim 
	\frac{|\xi|^{2-\sigma}}{ \lal \xi\ral^{2\lambda} \lal \eta \ral^{2\lambda}(\lal \xi \ral^{-2\sigma}+\theta^2)^3}
\end{aligned}
\]
due to $\lal \xi \ral^{-2}+\theta^2 \lesssim 1$ when $|\xi| \ge |\eta|\ge 1$. Upon transforming into spherical polar coordinates with $-\eta/|\eta|$ as the north pole, it yields that
\[
\begin{aligned}
	\int_{\R^3} |\mathfrak{M} \varphi ( \frac{\xi}{N}) |^2 \, d\xi 
	& \lesssim \int_{\rho\sim N}  \frac{\rho^{6-2\sigma}}{\lal \rho \ral^{4\lambda}\lal \eta \ral^{4\lambda} }   \lt(\int_{\theta^2 <\lal \rho\ral^{-2\sigma}} \frac{1}{\lal \rho\ral^{-4\sigma}}\sin\theta \,d\theta +\int_{\theta^2 \ge\lal \rho\ral^{-2\sigma}} \frac{1}{\theta^4}\sin\theta \,d\theta \rt)d\rho \\
	&  \lesssim     \frac{N^{7-2\sigma}}{\lal N \ral^{4\lambda-2\sigma}\lal \eta\ral^{4\lambda} }
\end{aligned}
\]
and	
\[
\begin{aligned}
	\int_{\R^3} |\Delta_{\xi}  \mathfrak{M} \varphi(\frac{\xi}{N}) |^2 \,d\xi 
	&  \lesssim \int_{\rho \sim N} \frac{\rho^{6-2\sigma}}{\lal \rho\ral^{4\lambda}\lal \eta \ral^{4\lambda} } \lt( \int_{\theta^2 <\lal \rho\ral^{-2\sigma}}	\frac{1}{\lal \rho \ral^{-12\sigma}}  \sin \theta \,d\theta + \int_{\theta^2 \ge \lal \rho\ral^{-2\sigma}}	\frac{1}{ \theta^{12}} \sin \theta \,d\theta \rt) d\rho \\
	&  \lesssim \int_{\rho \sim N} \frac{\rho^{6-2\sigma}}{\lal \rho\ral^{4\lambda-10\sigma} \lal \eta \ral^{4\lambda} } \, d\rho \\
	& \lesssim  \frac{N^{7-2\sigma}}{\lal N \ral^{4\lambda-10\sigma}\lal \eta\ral^{4\lambda} }.
\end{aligned}
\]

Thus, we deduce that
\[
\begin{aligned}
	\|\mathfrak{M} \varphi_N \|_{\dot{H}_{\xi}^{k}} 
	&\lesssim \lt( \frac{N^{(7-2\sigma)/2}}{\lal N \ral^{2\lambda-\sigma}\lal \eta\ral^{2\lambda} } \rt)^{1-k/2} \lt( \frac{N^{(7-2\sigma)/2}}{\lal N \ral^{2\lambda-5\sigma}\lal \eta\ral^{2\lambda} } \rt)^{k/2} \lesssim \frac{N^{7/2-\sigma}}{\lal N \ral^{2\lambda-\sigma(1+2k)}\lal \eta\ral^{2\lambda} } \lesssim \frac{1}{\lal N \ral^{2\lambda-7/2-2\sigma k}}
\end{aligned}
\]
which is summable over $N\ge1$ if $\lambda> 7/4+\sigma k$ for any $0\leq k\leq 2$. Hence, we establish that $\mathfrak{M}\in M_{\eta, \xi}^{k, \infty}$ when $|\eta|\ge 1$.

In the case $|\eta|<1$, similarly as before, we observe that 
\[
|\Phi| \gtrsim  \frac{|\eta|^{(2-\sigma)/2}}{\lal \xi\ral^{\sigma/2}}, \quad |\nabla_{\xi} \Phi| \lesssim  |\eta|,  \quad \mbox{and} \quad  |\Delta_{\xi} \Phi|\lesssim |\eta|
\]
This yields
\[
\begin{aligned}
	&|\nabla_{\xi} (\frac{1}{\Phi})| 
	\lesssim \frac{\lal \xi\ral^{\sigma}}{|\eta|^{1-\sigma}} \quad \mbox{and} \quad |\Delta_{\xi}(\frac{1}{\Phi})| 
	\lesssim \frac{\lal \xi\ral^{\sigma}}{|\eta|^{1-\sigma}} +\frac{\lal \xi\ral^{3\sigma/2}}{|\eta|^{1-3\sigma/2}} \lesssim \frac{\lal \xi\ral^{3\sigma/2}}{|\eta|^{1-\sigma}}
\end{aligned}
\]
owing to $|\eta|<1$. Consequently, it follows that
\[
\begin{aligned}
	|\mathfrak{M}|&\lesssim \frac{|\xi|^{2-\sigma}}{\lal \xi\ral^{2\lambda-\sigma/2}\lal \eta \ral^{2\lambda}},  \\
	|\Delta_{\xi}\mathfrak{M}|
	&\lesssim 
	\frac{|\xi|^{2-\sigma}|\eta|^{(2-\sigma)/2}}{\lal \xi\ral^{2\lambda}\lal \eta\ral^{2\lambda}}  \lt(  \frac{\lal \xi\ral^{3\sigma/2}}{|\eta|^{1-\sigma}} 
	+\frac{\lal \xi\ral^{\sigma}}{|\xi||\eta|^{1-\sigma}} 
	+\frac{\lal \xi\ral^{\sigma/2}}{|\xi|^2|\eta|^{(2-\sigma)/2}}\rt) \lesssim 
	\frac{|\xi|^{2-\sigma}}{\lal \xi\ral^{2\lambda-3\sigma/2}\lal \eta\ral^{2\lambda}}  
\end{aligned}
\]
for $|\xi| \ge 1 > |\eta|$. This deduces that
\[
\begin{aligned}
	\int_{\R^3} |\mathfrak{M} \varphi ( \frac{\xi}{N}) |^2 \, d\xi 
	&\lesssim \int_{\rho\sim N}  \frac{\rho^{6-2\sigma}}{\lal \rho \ral^{4\lambda-\sigma}\lal \eta \ral^{4\lambda} }  \,d\rho 
	\lesssim     \frac{N^{7-2\sigma}}{\lal N \ral^{4\lambda-\sigma}\lal \eta\ral^{4\lambda} },\\
	\int_{\R^3} |\Delta_{\xi}  \mathfrak{M} \varphi(\frac{\xi}{N}) |^2 \,d\xi 
	&\lesssim \int_{\rho \sim N} \frac{\rho^{6-2\sigma}}{\lal \rho\ral^{4\lambda-3\sigma}\lal \eta \ral^{4\lambda} } \,d\rho 
	\lesssim  \frac{N^{7-2\sigma} }{\lal N \ral^{4\lambda-3\sigma}\lal \eta\ral^{4\lambda} }
\end{aligned}
\]
which implies 
\[
\begin{aligned}
	\|\mathfrak{M} \varphi_N \|_{\dot{H}_{\xi}^{k}} 
	&\lesssim \lt( \frac{N^{(7-2\sigma)/2}}{\lal N \ral^{2\lambda-\sigma/2}\lal \eta\ral^{2\lambda} } \rt)^{1-k/2} \lt( \frac{N^{(7-2\sigma)/2}}{\lal N \ral^{2\lambda-3\sigma/2}\lal \eta\ral^{2\lambda} } \rt)^{k/2}\\
	&\lesssim \frac{N^{(7-2\sigma)/2}}{\lal N \ral^{2\lambda-\sigma(k+1)/2}\lal \eta\ral^{2\lambda} }\\
	&\lesssim \frac{1}{\lal N \ral^{2\lambda-7/2-\sigma(k-1)/2} }
\end{aligned}
\]
which is summable over $N\ge1$ if $\lambda> 7/4+\sigma(k-1)/4$ for any $0\leq k\leq 2$, thus establishing $\mathfrak{M}\in M_{\eta, \xi}^{k, \infty}$.

We next establish $\mathfrak{M} \in M_{\xi, \eta}^{k, \infty}$. 
Let us consider the following cases:
\begin{itemize}
	\item (B$'$1) $|\eta|<1 $,\\[-4mm]
	\item (B$'$2) $|\eta|\ge 1$.\\[-4mm]
\end{itemize}

In the case (B$'$1), we observe that
\[
|\Phi| \gtrsim \frac{|\eta|^{(2-\sigma)/2}}{\lal \xi\ral^{\sigma/2}}
\]
by Lemma \ref{Lemma_low_Phi} and $|\xi-\eta|\sim |\xi|$.
Given that $\min\{|\eta|, |\xi-\eta|\}=|\eta|$, we also derive
\[ |\nabla_{\eta}\Phi| \lesssim \frac{1}{|\eta|^{\sigma/2}}  \quad \mbox{and} \quad |\Delta_{\eta} \Phi| \lesssim \frac{1}{|\eta|^{1+\sigma/2}}
\]
by Lemma \ref{Lemma_deriv_Phi}. Then, we obtain
\[
|\nabla_{\eta} (\frac{1}{\Phi})| \lesssim \frac{\lal  \xi\ral^{\sigma}}{|\eta|^{2-\sigma/2}}  \quad \mbox{and} \quad
|\Delta_{\eta}(\frac{1}{\Phi})| \lesssim \frac{\lal \xi\ral^{3\sigma/2}}{|\eta|^{3-\sigma/2}},
\]
which leads to
\[
\begin{aligned}
	|\mathfrak{M}|&\lesssim \frac{|\xi|^{2-\sigma}}{\lal \eta \ral^{2\lambda} \lal \xi\ral^{2\lambda-\sigma/2}},\\
	|\Delta_{\eta}\mathfrak{M}|& \lesssim \frac{|\xi|^{2-\sigma}}{\lal \eta \ral^{2\lambda} \lal \xi \ral^{2\lambda}} \lt( \frac{\lal \xi\ral^{3\sigma/2}}{|\eta|^2} +\frac{\lal \xi\ral^{\sigma}}{|\eta|^2} +\frac{\lal \xi\ral^{\sigma/2}}{|\eta|^2} \rt)  
	\lesssim \frac{|\xi|^{2-\sigma}}{|\eta|^{2}\lal \eta\ral^{2\lambda} \lal\xi \ral^{2\lambda-3\sigma/2}}.
\end{aligned}
\]
Consequently, we deduce that
\[
\begin{aligned}
	\int_{\R^3} |\mathfrak{M} \varphi ( \frac{\eta}{N}) |^2  \,d\eta 
	&\lesssim \int_{\rho\sim N}  \frac{\rho^2 |\xi|^{4-2\sigma}}{\lal \rho \ral^{4\lambda}\lal \xi \ral^{4\lambda-\sigma} }  \, d\rho 
	\lesssim     \frac{N^3 |\xi|^{4-2\sigma}}{\lal N \ral^{4\lambda}\lal \xi\ral^{4\lambda-\sigma} },\\
	\int_{\R^3} |\Delta_{\eta}  \mathfrak{M} \varphi(\frac{\eta}{N}) |^2 \,d\eta 
	& \lesssim \int_{\rho \sim N} \frac{|\xi|^{4-2\sigma}}{\rho^2 \lal \rho\ral^{4\lambda}\lal \xi \ral^{4\lambda-3\sigma} } \,d\rho 
	\lesssim  \frac{|\xi|^{4-2\sigma}}{N \lal N \ral^{4\lambda}\lal \xi\ral^{4\lambda-3\sigma} }.
\end{aligned}
\]
Hence, we arrive at
\[
\begin{aligned}
	\|\mathfrak{M} \varphi_N \|_{\dot{H}_{\eta}^{k}} 
	&\lesssim   \lt(  \frac{N^{3/2}|\xi|^{2-\sigma}}{\lal N \ral^{2\lambda}\lal \xi \ral^{2\lambda-\sigma/2} }\rt)^{1-k/2}  \lt(  \frac{|\xi|^{2-\sigma}}{N^{1/2}\lal N \ral^{2\lambda} \lal \xi \ral^{2\lambda-3\sigma/2} }  \rt)^{k/2} \\
	&= \frac{N^{3/2-k}|\xi|^{2-\sigma}}{\lal N \ral^{2\lambda}\lal \xi\ral^{2\lambda-\sigma(k+1)/2} }
	\lesssim  \frac{1}{\lal N \ral^{2\lambda-3/2+k}\lal \xi\ral^{2\lambda-2-\sigma(k-1)/2}},
\end{aligned}
\]
where $0\leq k\leq 3/2$. When $\lambda>1+\sigma(k-1)/4$, this implies $\mathfrak{M} \in M_{ \xi, \eta}^{k, \infty}$ for $0\leq k\leq 3/2$.

In the case (B$'$2), it is straightforward to get
\[
|\Phi| \gtrsim |\eta|(\lal \xi\ral^{-2\sigma}+\theta^2), \quad
|\nabla_{\eta}\Phi| \lesssim |\xi|+ \sin \frac{\beta}{2}\lesssim |\xi|, \quad |\nabla_{\eta} \Phi| \lesssim |\xi|
\]
using Lemma \ref{Lemma_low_Phi} and Lemma \ref{Lemma_deriv_Phi} together with the following fact:
\bq\label{sin_beta}
\sin \frac{\beta}{2} \leq \frac{\sin \beta}{2}=\frac{|\xi|}{2|\eta|} \sin \gamma \leq|\xi|
\eq
for $|\eta|\ge1$.
Then we have 
\[
\begin{aligned}
	|\nabla_{\eta} (\frac{1}{\Phi})|  \lesssim \frac{|\xi|}{|\eta|^2(\lal \xi\ral^{-2\sigma}+\theta^2)^2} \quad \mbox{and} \quad 
	|\Delta_{\eta} (\frac{1}{\Phi})|  \lesssim \frac{|\xi|^2}{|\eta|^3(\lal \xi\ral^{-2\sigma}+\theta^2)^3} 
\end{aligned}
\]
due to $|\xi|\ge|\eta|$ and $\lal \xi \ral^{-2\sigma}+\theta^2 \lesssim 1$.
This leads to
\[
\begin{aligned}
	|\mathfrak{M}| &\lesssim \frac{|\xi|^{2-\sigma}}{|\eta|^{\sigma/2}\lal \eta\ral^{2\lambda} \lal \xi \ral^{2\lambda} (\lal \xi \ral^{-2\sigma}+\theta^2)}
	\lesssim \frac{|\xi|^{2-\sigma}}{\lal \eta\ral^{2\lambda} \lal \xi \ral^{2\lambda} (\lal \xi \ral^{-2\sigma}+\theta^2)} ,  \\
	|\Delta_{\eta} \mathfrak{M}| 
	&\lesssim \frac{|\xi|^{2-\sigma}}{ |\eta|^{2+\sigma/2}\lal \eta \ral^{2\lambda} \lal \xi\ral^{2\lambda} } \lt( \frac{|\xi|^2 }{(\lal\xi \ral^{-2\sigma}+\theta^2)^3}+\frac{|\xi| }{(\lal\xi \ral^{-2\sigma}+\theta^2)^2}+\frac{1}{\lal\xi \ral^{-2\sigma}+\theta^2}\rt) \\
	&\lesssim \frac{|\xi|^{2-\sigma}}{|\eta|^{2+\sigma/2}\lal \eta\ral^{2\lambda} \lal\xi \ral^{2\lambda-2}(\lal \xi\ral^{-2\sigma}+\theta^2)^3}\\
	&\lesssim \frac{|\xi|^{2-\sigma}}{\lal \eta\ral^{2\lambda} \lal\xi \ral^{2\lambda-2}(\lal \xi\ral^{-2\sigma}+\theta^2)^3}
\end{aligned}
\]
since $|\eta|\ge 1$. Thus, we obtain
\[
\begin{aligned}
	\int_{\R^3} |\mathfrak{M} \varphi ( \frac{\eta}{N}) |^2 \,d\eta 
	&\lesssim \int_{\rho\sim N}  \frac{\rho^2 |\xi|^{4-2\sigma}}{\lal \rho \ral^{4\lambda}\lal \xi \ral^{4\lambda} }   
	\lt(\int_{\theta^2 <\lal \xi\ral^{-2\sigma}} \frac{\theta}{\lal \xi\ral^{-4\sigma}} \,d\theta +\int_{\theta^2 \ge\lal \xi\ral^{-2\sigma}} \frac{1}{\theta^3} \,d\theta \rt)d\rho \cr
	& \lesssim     \frac{N^3 |\xi|^{4-2\sigma}}{\lal N \ral^{4\lambda}\lal \xi\ral^{4\lambda-2\sigma} }, \\
	\int_{\R^3} |\Delta_{\eta}  \mathfrak{M} \varphi(\frac{\eta}{N}) |^2 \,d\eta 
	&\lesssim \int_{\rho \sim N} \frac{\rho^2|\xi|^{4-2\sigma}}{ \lal \rho\ral^{4\lambda}\lal \xi \ral^{4\lambda-4} } \lt( \int_{\theta^2 <\lal \xi\ral^{-2\sigma}}	\frac{\theta}{\lal \xi \ral^{-12\sigma}}\,d\theta + \int_{\theta^2 \ge \lal \xi\ral^{-2\sigma}}	\frac{1}{ \theta^{11}}\,d\theta \rt) d\rho \\
	&\lesssim  \frac{N^3|\xi|^{4-2\sigma}}{\lal N \ral^{4\lambda}\lal \xi\ral^{4\lambda-4-10\sigma} }.
\end{aligned}
\]
Hence, we have
\[
\begin{aligned}
	\|\mathfrak{M} \varphi_N \|_{\dot{H}_{\eta}^{k}} 
	&\lesssim   \lt(  \frac{N^{3/2}|\xi|^{2-\sigma}}{\lal N \ral^{2\lambda}\lal \xi \ral^{2\lambda-\sigma} }\rt)^{1-k/2}  \lt(  \frac{N^{3/2}|\xi|^{2-\sigma}}{\lal N \ral^{2\lambda} \lal \xi \ral^{2\lambda-2-5\sigma} }  \rt)^{k/2} \\
	&= \frac{N^{3/2}|\xi|^{2-\sigma}}{\lal N \ral^{2\lambda}\lal \xi\ral^{2\lambda-\sigma-k(1+2\sigma)} }\\ 
	&\lesssim  \frac{1}{\lal N \ral^{2\lambda-3/2}\lal \xi\ral^{2\lambda-2-k(1+2\sigma)} }
\end{aligned}
\]
for $0\leq k\leq 2$.
If $\lambda>1+k(1+2\sigma)/2$, we easily check that it is summable over $N$, and thus $\mathfrak{M} \in M_{\xi, \eta}^{k, \infty}$ for $0\leq k\leq 2$.
This completes the proof for (Case B).
%
%
%
%
%
%
%
%
%
\section{Global-in-time well-posedness}\label{sec_5}
In this section, we provide estimates for the negative Sobolev norm, which facilitate the successful derivation of the $H^{2s}$ estimates, supported by the $L^p$ decay estimates. Thereby, we complete the proof of Theorem \ref{global}.
%
%
%
%
%
%
%
%
%
\subsection{Energy estimates}
We first provide the $\dot{H}^{-\frac{2-\sigma}{2}}$ estimate which is required for $L^p$ decay estimates as stated in Theorem \ref{Theorem_decay}. 
\begin{lemma}\label{H^{-1/2}_norm}
Let $0< \sigma<2$. For $T^{\ast}>0$, we have
	\[
\|\alpha(t)\|_{\dot{H}_x^{-\frac{2-\sigma}{2}}} \lesssim \|\alpha(0)\|_Y +\sup_{0<t<T^{\ast}}\|\alpha(t)\|_{X}^2.
	\]
\end{lemma}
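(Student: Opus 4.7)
The plan is to work directly from the Duhamel representation \eqref{duhamel} without invoking Shatah's normal form, since for this negative-order estimate the gain in the multiplier $m$ is already enough. Since $e^{i(t-\tau)p(|\nabla|)}$ is unitary on every homogeneous Sobolev space, Duhamel gives
\[
\|\alpha(t)\|_{\dot H^{-\frac{2-\sigma}{2}}}\le \|\alpha(0)\|_{\dot H^{-\frac{2-\sigma}{2}}}+\int_0^t \bigl\| |\nabla|^{-\frac{2-\sigma}{2}}\mathcal{Q}(\alpha)(\tau)\bigr\|_{L^2}\, d\tau,
\]
and the first term is bounded by $\|\alpha(0)\|_Y$ directly from the definition of the $Y$-norm.

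The key observation is that each summand of $m(\xi,\eta)$ carries a factor of either $p(\xi)=|\xi|^{(2-\sigma)/2}(1+|\xi|^\sigma)^{1/2}$ or $|\xi|=|\xi|^{(2-\sigma)/2}\cdot|\xi|^{\sigma/2}$, so the outer weight $|\xi|^{-(2-\sigma)/2}$ exactly absorbs one power of $|\xi|^{(2-\sigma)/2}$ and leaves a smooth multiplier of order $\sigma/2$ in $\xi$. Concretely, $|\xi|^{-(2-\sigma)/2}m\sim (1+|\xi|^\sigma)^{1/2}r(\xi)r(\xi-\eta)r(\eta)+|\xi|^{\sigma/2}r(\xi-\eta)r(\eta)$. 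This is the content of the computation leading to \eqref{Q_B} with $\tilde s=0$, which yields
\[
\bigl\| |\nabla|^{-\frac{2-\sigma}{2}}\mathcal{Q}(\alpha)\bigr\|_{L^2}\lesssim \|\alpha\|_{W^{\sigma/2,p}}\,\|\alpha\|_{H^{\frac{3}{p}+\frac{\sigma}{2}}}.
\]

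I would then bound each factor using the $X$-norm. For the first, since $s\ge 16\ge \sigma/2$, we have $\|\alpha(\tau)\|_{W^{\sigma/2,p}}\lesssim \|\alpha(\tau)\|_{W^{s,p}}\le (1+\tau)^{-\beta_\sigma}\|\alpha\|_X$ by the very definition of $\|\cdot\|_X$. For the second, interpolating between $\dot H^{-(2-\sigma)/2}$ and $\dot H^{2s-(2-\sigma)/2}$ gives $\|\alpha\|_{H^{3/p+\sigma/2}}\lesssim \||\nabla|^{-(2-\sigma)/2}\alpha\|_{H^{2s}}\le \|\alpha\|_X$ whenever $\frac{3}{p}+\frac{\sigma}{2}+\frac{2-\sigma}{2}=\frac{3}{p}+1\le 2s$, which is automatic for $s\ge 16$. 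The integrand therefore decays like $(1+\tau)^{-\beta_\sigma}\|\alpha\|_X^2$, and integrating in $\tau$ produces a bound that is uniform in $t$ precisely because the lower bound on $p$ in \eqref{p} was chosen to ensure $\beta_\sigma>1$ in both ranges of $\sigma$.

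The main obstacle is the identity controlling $\||\nabla|^{-(2-\sigma)/2}\mathcal{Q}(\alpha)\|_{L^2}$: despite the singular prefactor $|\xi|^{-(2-\sigma)/2}$, no negative-regularity kernel estimate along the lines of Proposition \ref{Proposition_m} is needed, because $m$ always produces a compensating $|\xi|^{(2-\sigma)/2}$ factor. After this cancellation the remaining $\sigma/2$-order derivative is distributed by a standard Kato--Ponce type inequality and closed via Sobolev embedding; this is exactly what is already carried out in \eqref{Q_B}. No additional bootstrapping is introduced, so the estimate genuinely stands on the decay already encoded in the $X$-norm.
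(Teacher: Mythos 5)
Your proof is correct and follows essentially the same route as the paper: Duhamel plus unitarity of $e^{itp(|\nabla|)}$ on $\dot H^{-(2-\sigma)/2}$, then exploiting the $|\xi|^{(2-\sigma)/2}$ gain in the multiplier $m$ to reduce to a quadratic bound on $\mathcal{Q}(\alpha)$, and finally integrating the resulting $(1+\tau)^{-\beta_\sigma}$ decay in time using $\beta_\sigma>1$ from \eqref{p}. The only cosmetic difference is that you reuse \eqref{Q_B} at $\tilde s=0$ (an $L^p\times L^{\frac{1}{1/2-1/p}}$ H\"older split), while the paper's proof of Lemma \ref{H^{-1/2}_norm} rederives the bound via an $L^2\times L^\infty$ split and the embedding $W^{3/p+\delta,p}\subset L^\infty$; both rest on the same cancellation and yield $\lesssim(1+\tau)^{-\beta_\sigma}\|\alpha\|_X^2$ for the integrand.
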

\begin{proof}
	From \eqref{duhamel}, we get
	\bq\label{est_negative_1}
	\| \alpha(t)\|_{\dot{H}^{-\frac{2-\sigma}{2}}} \lesssim  \|  \alpha(0)\|_{\dot{H}^{-\frac{2-\sigma}{2}}} +  \int_0^t \lt\| \mathcal{Q}(\alpha)(\tau) \rt\|_{\dot{H}^{-\frac{2-\sigma}{2}}} d\tau
	\eq
	given the isometric property of the operator $e^{itp(|\nabla|)}$ on $\dot{H}^{-(2-\sigma)/2}$.
Recall that 
\[
\mathcal{Q}(\alpha)
= \iint_{\R^3\times \R^3}  e^{i x\cdot \xi} m_{r, l}(\xi, \eta) \widehat{\alpha_r}(\xi-\eta) \widehat{\alpha_l} (\eta) \,d\xi d \eta,
\]
where 
\[
m_{r, l} \sim p(\xi) r(\xi)r(\xi-\eta) r(\eta)+|\xi| r(\xi-\eta) r(\eta).
\]
Further, we obtain
	\[
	\begin{aligned}
		\frac{\widehat{\mathcal{Q}(\alpha)}(\xi)}{|\xi|^{(2-\sigma)/2}} 
		&=  \int_{\R^3} \frac{m_{r,l}(\xi, \eta)}{|\xi|^{(2-\sigma)/2}} \widehat{\alpha}_{r}(\xi-\eta) \widehat{\alpha}_{l}(\eta) \,d\eta \\
		& \sim  \int_{\R^3}  (1+|\xi|^{\sigma})^{\frac{1}{2}} r(\xi )r(\xi-\eta)\widehat{\alpha}_{r}(\xi-\eta) r(\eta) \widehat{\alpha}_{l}(\eta)\, d\eta   +  \int_{\R^3} |\xi|^{\frac{\sigma}{2}}  r(\xi-\eta)\widehat{\alpha}_{r}(\xi-\eta)  r(\eta) \widehat{\alpha}_{l}(\eta)\, d\eta.  
	\end{aligned}
	\]
By using the basic relations
	\[
	(1+|\xi|^{\sigma})^{\frac{1}{2}} \leq 1+|\xi|^{\frac{\sigma}{2}} \quad \text{and} \quad |\xi|^{\frac{\sigma}{2}} \leq |\xi-\eta|^{\frac{\sigma}{2}}+|\eta|^{\frac{\sigma}{2}},
	\]
we deduce that 
	\[
	\begin{aligned}
	\|\mathcal{Q}(\alpha)\|_{\dot{H}^{-\frac{2-\sigma}{2}}} 
	&\lesssim 
\lt\|  (r(\nabla)\alpha_{r}) (r(\nabla)\alpha_{l}) \rt\|_{L^2}  +\| (r(\nabla)\alpha_{r}) ( |\nabla|^{\sigma/2}r(\nabla)\alpha_{l}) \|_{L^2}\\
&\lesssim \| \alpha\|_{L^2}    \|r(\nabla)\alpha\|_{L^{\infty}}  
+ \| \alpha\|_{\dot{H}^{\sigma/2}}\|   r(\nabla)\alpha\|_{L^{\infty}} 
	\end{aligned}
	\]
by virtue of \eqref{est_r}, the Plancherel theorem, and symmetry.
By employing the embedding property $W^{\frac{3}{p}+\delta, p} \subset L^{\infty} $ for small $\delta>0$, we utilize \eqref{est_r} to obtain   
	\bq\label{est_negative_2}
	\begin{aligned}
		\int_0^t \lt\| \mathcal{Q}(\alpha)(\tau) \rt\|_{\dot{H}^{-\frac{2-\sigma}{2}}} d\tau 
		&\lesssim \sup_{0<t<T^{\ast}} (\| \alpha(t)\|_{L_x^2}+\| \alpha(t)\|_{\dot{H}_x^{\sigma/2}} ) \int_{0}^t   \|\alpha(\tau)\|_{W^{\frac{3}{p}+\delta, p}}  \,d\tau \\
		&\lesssim \sup_{0<t<T^{\ast}}\| \alpha(t)\|_{X}^2  \int_0^t \frac{1}{(1+\tau)^{\beta_{\sigma}}}\, d\tau\\
		&\lesssim \sup_{0<t<T^{\ast}} \| \alpha(t)\|_{X}^2  
	\end{aligned}
	\eq
	if $\beta_{\sigma}>1$ and $p<\infty$. 
	By combining \eqref{est_negative_1} and \eqref{est_negative_2}, we conclude the desired result.
\end{proof}

Next, we present $H^{2s}$ estimates in the lemma below.
\begin{lemma}\label{H^s_norm}
Let $s>1+\frac{3}{p}$.
For $T^{\ast}>0$,
	we have
	\[
\|\alpha(t)\|_{H^{2s}}
	\lesssim \|\alpha(0)\|_{Y} +\sup_{0<t<T^{\ast}}\|\alpha(t)\|_{X}^{3/2},
	\]
	where $\alpha$ is given as in \eqref{sol}, with 
	\[
	\|(n_0, u_0)\|_{Y}<\varepsilon \quad \mbox{and} \quad \sup_{0<t<T^{\ast}} \|n(t)\|_{L^{\infty}}<\frac{1}{2}.
	\]
\end{lemma}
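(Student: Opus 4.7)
My plan is to employ the modified Sobolev space $\dot{\mathcal H}^{2s-\sigma/2}$ from \cite{CJ23} to replace the $\dot H^{2s-\sigma/2}$ piece of the natural energy, thereby producing an exact top-order cancellation of the loss of regularity induced by the Riesz force. Concretely, I work with
\[
\mathcal E(t):=\|n(t)\|_{H^{2s}}^2+\|u(t)\|_{H^{2s}}^2+\|n(t)\|_{\dot{\mathcal H}^{2s-\sigma/2}}^2,
\]
which, by \eqref{sol} and the standing hypothesis $\sup_{t}\|n(t)\|_{L^\infty}<1/2$, is comparable to $\|\alpha(t)\|_{H^{2s}}^2+\|\alpha(t)\|_{\dot H^{-(2-\sigma)/2}}^2$.

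Applying $|\nabla|^{2s}$ to \eqref{ER_1} and pairing with $|\nabla|^{2s}n$, $|\nabla|^{2s}u$, the linear coupling $\int\nabla|\nabla|^{2s}n\cdot|\nabla|^{2s}u+\int|\nabla|^{2s}n\,\nabla\cdot|\nabla|^{2s}u$ cancels by integration by parts, and the quadratic nonlinearities from $\nabla\cdot(nu)$ and $\nabla(\tfrac{n^2+|u|^2}{2})$ reduce, via the Makino-type symmetric-hyperbolic structure, to commutator remainders controllable by \eqref{KP_ineq}--\eqref{tech_1}. The Riesz force leaves the stubborn top-order term
\[
\mathcal R:=\int\frac{\nabla\cdot|\nabla|^{2s}u}{|\nabla|^\sigma}\,|\nabla|^{2s}n\,dx,
\]
which cannot be bounded by $\mathcal E$ alone because it morally requires $2s+1-\sigma$ derivatives on $n$. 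To balance $\mathcal R$, I differentiate $\|n\|_{\dot{\mathcal H}^{2s-\sigma/2}}^2=\int(1+n)^{-1}v^2\,dx$ with $v:=|\nabla|^{2s-\sigma/2}n$, using the continuity equation rewritten as $\partial_t n+\nabla\cdot((1+n)u)=0$. The weight $(1+n)^{-1}$ absorbs the factor $1+n$ and produces
\[
\tfrac12\tfrac{d}{dt}\|n\|_{\dot{\mathcal H}^{2s-\sigma/2}}^2=-\int|\nabla|^{2s-\sigma/2}n\cdot|\nabla|^{2s-\sigma/2}(\nabla\cdot u)\,dx+\mathcal C,
\]
where $\mathcal C$ collects the commutator $[|\nabla|^{2s-\sigma/2},n]\nabla\cdot u$, the transport term $|\nabla|^{2s-\sigma/2}(u\cdot\nabla n)$, and the contribution of $\partial_t[(1+n)^{-1}]\,v^2$, all cubic and of regularity at most $2s-\sigma/2$. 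A direct Plancherel computation gives the exact identity $\mathcal R=\int|\nabla|^{2s-\sigma/2}n\cdot|\nabla|^{2s-\sigma/2}(\nabla\cdot u)\,dx$, so summing the $\dot H^{2s}$ identity with the $\dot{\mathcal H}^{2s-\sigma/2}$ identity makes $\mathcal R$ cancel exactly.

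After the cancellation, every surviving quantity is trilinear in $(n,u)$ and admits a bound of the form
\[
\bigl(\|n\|_{W^{1,\infty}}+\|u\|_{W^{1,\infty}}\bigr)\,\mathcal E(t)+\text{l.o.t.}\;\lesssim\;(1+t)^{-\beta_\sigma}\,\|\alpha\|_X\,\mathcal E(t)
\]
by \eqref{KP_ineq}--\eqref{tech_1}, the embedding $W^{s,p}\hookrightarrow L^\infty$ valid for $s>1+3/p$, and the $L^p$ decay of Theorem \ref{Theorem_decay}. The low-order $L^2$-part of $\|\alpha\|_{H^{2s}}^2$ is treated exactly as in Lemma \ref{H^{-1/2}_norm}. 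Since $\beta_\sigma>1$ by \eqref{p}, a Gr\"onwall argument together with the smallness of $\sup_{0<t<T^\ast}\|\alpha\|_X$ yields $\mathcal E(t)\lesssim\mathcal E(0)+\|\alpha\|_X^3$, and hence $\|\alpha(t)\|_{H^{2s}}\lesssim\|\alpha(0)\|_Y+\|\alpha\|_X^{3/2}$ after taking square roots and noting $\mathcal E(0)^{1/2}\lesssim\|\alpha(0)\|_Y$.

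The principal obstacle is the delicate bookkeeping of the Riesz cancellation: one must verify that the factor $1+n$ coming from the nonlinear continuity equation pairs precisely with the weight $(1+n)^{-1}$ of $\dot{\mathcal H}^{2s-\sigma/2}$, and that every leftover in $\mathcal C$---especially the $\partial_t[(1+n)^{-1}]v^2$ term, which a priori carries a full $\partial_t n\sim(1+n)\nabla\cdot u$---reduces to a trilinear form controllable by $\|\alpha\|_{W^{s,p}}\,\mathcal E$ with the time-integrable decay provided by Theorem \ref{Theorem_decay}.
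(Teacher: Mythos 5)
Your proposal is essentially the paper's argument, with one small streamlining: you propose to use the weighted space $\dot{\mathcal H}^{2s-\sigma/2}$ uniformly for all $0<\sigma<2$, whereas the paper splits into $0<\sigma<1$ (where the weight $(1+n)^{-1}$ is used to cancel the over-regular Riesz term) and $1\le\sigma<2$ (where the unweighted $\dot H^{2s-1-\sigma/2}$ energy of $\nabla n$ suffices because the leftover commutator $\int\nabla|\nabla|^{2s-\sigma}n\cdot\nabla(|\nabla|^{2s-2}\nabla\cdot(nu))\,dx$ is already of order $\le 2s$). Your unification is legitimate and slightly cleaner; it does not change the substance of the argument, the key cancellation $\mathcal R+I_4=0$ being identical in both.

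One bookkeeping point to watch: your $\mathcal E(t)$ includes only the top-order $\|n\|_{\dot{\mathcal H}^{2s-\sigma/2}}^2$, but the $Y$- and $X$-norms involve $\||\nabla|^{-(2-\sigma)/2}\alpha\|_{H^{2s}}$, which at the level of $n$ means the full family $\||\nabla|^{-\sigma/2}n\|_{H^{2s}}$, i.e. all intermediate orders $\dot H^{2l-\sigma/2}$, $0\le l\le s$, not just the top one. You handle $l=0$ by appeal to Lemma~\ref{H^{-1/2}_norm}, but the intermediate orders also need to enter the Gr\"onwall functional (as the paper does via $\sum_{l=1}^{s}\|n\|_{\dot{\mathcal H}^{2l-\sigma/2}}^2$ in \eqref{energy42}); each is obtained by the same computation at lower order, so this is easily repaired but should be stated.
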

\begin{proof}
Note that the system \eqref{ER_1} can be reformulated as
	\bq\label{ER_1_simple}
	\pa_t w +A_j(w)\pa_j w = \lt(0, \,	-\nabla |\nabla|^{-\sigma} n \rt)^T,
	\eq
	where $w=(w^0, w^1, w^2, w^3)=(n, u)^T$ and matrix $A_j$ for $j=1,2,3$ is given by
	$$
	A_j(w)=	\begin{pmatrix} 
		w^j   & ( w^0+1) e_j^T \\
		(w^0 +1) e_j & w^j \mathbf{I}_3  \\
	\end{pmatrix}_{4\times 4} .
	$$
Since
	\[
	{\rm Re}(\alpha) =\frac{p(|\nabla|)}{|\nabla|}n \quad \text{and} \quad {\rm Im}(\alpha)=\frac{\nabla }{|\nabla|}\cdot u,
	\]
it suffices to show 
	\bq\label{H^s_norm_sub}
	\|w(t)\|_{H^{2s}}^2 +\||\nabla|^{-\frac{\sigma}{2}} n(t)\|_{H^{2s}}^2 \lesssim   \|w(0) \|_{Y}^2  +\sup_{0<t<T^{\ast}}\|w(t)\|^{3}_{X}
	\eq
	for $0<t<T^{\ast}$.
	In fact, once \eqref{H^s_norm_sub} is established, we deduce
	\[
	\begin{aligned}
		\|\alpha(t)\|_{H^{2s}}^2
		&=\lt\|\frac{p(|\nabla|)}{|\nabla|}n(t) \rt\|_{H^{2s}}^2 +\lt\|\frac{\nabla }{|\nabla|}\cdot u(t) \rt\|_{H^{2s}}^2 \\
		&\lesssim \|n(t)\|_{H^{2s} \cap \dot{H}^{-\sigma/2}}^2 +\|u(t)\|_{H^{2s}}^2\\
		&\lesssim \|w(t)\|_{{H}^{2s}}^2 +\|n(t)\|_{\dot{H}^{-\sigma/2}\cap H^{2s}}^2 \\
		&\lesssim \|w(0)\|_{Y}^2 +\sup_{0<t<T^{\ast}}\|w(t)\|_{X}^{3}\\
		&\lesssim \|\alpha(0)\|_{Y}^2 +\sup_{0<t<T^{\ast}}\|\alpha(t)\|_{X}^{3}
	\end{aligned}
	\]
	by \eqref{nu_alpha}, thereby concluding the desired result.
	
	Now, we prove \eqref{H^s_norm_sub}.
	From \eqref{ER_1_simple}, we derive
	\bq\label{energy0}
	\begin{aligned}
		\frac{1}{2}\frac{d}{dt} \int_{\R^3} \lt| |\nabla|^{2s} w \rt|^2  dx 
		&= -\int_{\R^3}  \lt[|\nabla|^{2s}, A_{j}(w) \rt]\pa_j w \cdot |\nabla|^{2s} w  \,dx \\
		&\quad +\frac{1}{2}\int_{\R^3}   \pa_j A_{j}(w) \lt||\nabla|^{2s}w \rt|^2   dx 
		- \int_{\R^3}   \nabla |\nabla|^{2s-\sigma} n \cdot |\nabla|^{2s} u   \,dx\\
		& \lesssim \|\nabla w\|_{L^{\infty}} \|w\|_{\dot{H}^{2s}}^2- \int_{\R^3}   \nabla |\nabla|^{2s-\sigma} n \cdot |\nabla|^{2s} u \, dx
	\end{aligned}
	\eq
by using \eqref{tech_1} for $s>0$.
To estimate the last term on the right-hand side of the above, we consider two cases: $0<\sigma<1$ and $1\leq \sigma<2$. 
In the case $1\leq \sigma<2$, we observe that 
	\bq\label{Riesz_1}
	\begin{aligned}
		 \frac{1}{2} \frac{d}{dt}\int_{\R^3} |\nabla |\nabla|^{2s-1-\frac{\sigma}{2}} n|^2 \,dx  =\int_{\R^3}  \nabla |\nabla|^{2s-\sigma}  n \cdot |\nabla|^{2s} u \,dx 
		-\int_{\R^3}  \nabla |\nabla|^{2s-\sigma}n \cdot \nabla \lt(|\nabla|^{2s-2}\nabla \cdot(nu) \rt) dx.
	\end{aligned}
	\eq
Using the Sobolev inequality and \eqref{KP_ineq}, we estimate
	\[
	\begin{aligned}
	\lt| \int_{\R^3}  \nabla |\nabla|^{2s-\sigma}n \cdot \nabla \lt(|\nabla|^{2s-2}\nabla \cdot(nu) \rt) dx \rt|	
	 \lesssim  \|n\|_{\dot{H}^{2s+1-\sigma}} \|nu\|_{\dot{H}^{2s}} \lesssim  \|w\|_{L^{\infty}}\|w\|_{\dot{H}^{2s-1}}^{\sigma-1} \|w\|_{\dot{H}^{2s}}^{3-\sigma} \lesssim \|w\|_{L^{\infty}} \|w\|_{H^{2s}}^2	
	\end{aligned}
	\]
for $1\leq \sigma<2$. Hence, combining \eqref{energy0} and \eqref{Riesz_1}, we arrive at
\bq\label{energy1}
\frac{1}{2}\frac{d}{dt} (\|w\|_{\dot{H}^{2s}} ^2 +  \| \nabla n\|_{\dot{H}^{2s-1-\frac{\sigma}{2}}}^2)  \lesssim \| w\|_{W^{1, \infty}} \|w\|_{H^{2s}}^2
\eq
for $1\leq \sigma<2$.

To deal with the case $0<\sigma<1$, we shall use the modification argument developed in \cite{CJ23}.
Let us introduce a modified space:
\[
\|f\|_{\dot{\mathcal{H}}^s}:= \| (n+1)^{-\frac{1}{2}}|\nabla|^{s} f\|_{L^2}, \qquad s>0.
\]
If $\sup_{0<t<T^{\ast}}\|n(t)\|_{L^{\infty}}<1$, we observe that
\[
\lt(1-\sup_{0<t<T^{\ast}}\|n(t)\|_{L^{\infty}}\rt)^{1/2}\|f\|_{\dot{\mathcal{H}}^{s}} \lesssim \|f\|_{\dot{H}^s} \lesssim \lt(1+\sup_{0<t<T^{\ast}}\|n(t)\|_{L^{\infty}}\rt)^{1/2}\|f\|_{\dot{\mathcal{H}}^s}
\]
which signifies that  $\|f\|_{\dot{\mathcal{H}}^s} \sim \|f\|_{\dot{H}^s}$ for $s>0$. Then, we estimate
\[
\begin{aligned}
&\frac{1}{2} \frac{d}{dt} \int_{\R^3} \frac{1}{n+1} \lt||\nabla|^{2s-\frac{\sigma}{2}}n \rt|^2 dx\\ 
&\quad = \frac{1}{2} \int_{\R^3} \frac{1}{n+1} \nabla \cdot u \lt||\nabla|^{2s-\frac{\sigma}{2}}n\rt|^2 dx
 +\frac{1}{2} \int_{\R^3} \frac{1}{(n+1)^2} u\cdot \nabla n \lt||\nabla|^{2s-\frac{\sigma}{2}}n \rt|^2 dx\\
&\qquad -\int_{\R^3} \frac{1}{n+1}  |\nabla|^{2s-\frac{\sigma}{2}} n \lt[|\nabla|^{2s-\frac{\sigma}{2}}, n+1\rt] \nabla \cdot u \,dx 
-\int_{\R^3}  |\nabla|^{2s-\frac{\sigma}{2}} n |\nabla|^{2s-\frac{\sigma}{2}}\nabla  \cdot u \,dx\\
&\qquad -\int_{\R^3} \frac{1}{n+1} |\nabla|^{2s-\frac{\sigma}{2}}n \lt[|\nabla|^{2s-\frac{\sigma}{2}}, u\rt] \cdot \nabla n \,dx -\frac{1}{2}\int_{\R^3} \frac{1}{n+1} u \cdot \nabla \lt| |\nabla|^{2s-\frac{\sigma}{2}}  n\rt|^2 dx\\
&\quad =: \sum_{i=1}^6 I_i,
\end{aligned}
\]
where we easily get
\[
I_1 \lesssim \frac{1}{1-\sup_{0<t<T^{\ast}}\|n(t)\|_{L^{\infty}}} \|\nabla w\|_{L^{\infty}} \|w\|_{\dot{H}^{2s-\frac{\sigma}{2}}}^2 \lesssim \|\nabla w\|_{L^{\infty}} \|w\|_{\dot{H}^{2s-\frac{\sigma}{2}}}^2.
\]
By virtue of \eqref{tech_1}, it follows that
\[
\begin{aligned}
I_3 &=-\int_{\R^3} \frac{1}{n+1}  |\nabla|^{2s-\frac{\sigma}{2}} n \lt[|\nabla|^{2s-\frac{\sigma}{2}}, n\rt] \nabla \cdot u \,dx \cr
& \lesssim \lt\||\nabla|^{2s-\frac{\sigma}{2}} n\rt\|_{L^2} \lt\|[|\nabla|^{2s-\frac{\sigma}{2}}, n] \nabla \cdot u\rt\|_{L^2} \cr
&\lesssim \|\nabla w\|_{L^{\infty}} \|w\|_{\dot{H}^{2s-\frac{\sigma}{2}}}^2,\\
I_5 &=-\int_{\R^3} \frac{1}{n+1} |\nabla|^{2s-\frac{\sigma}{2}}n [|\nabla|^{2s-\frac{\sigma}{2}}, u] \cdot \nabla n \,dx \lesssim \|\nabla w\|_{L^{\infty}} \|w\|_{\dot{H}^{2s-\frac{\sigma}{2}}}^2.
\end{aligned}
\]
For $I_6$, we find
\[
\begin{aligned}
I_6 
&= 
\frac{1}{2} \int_{\R^3} \nabla (\frac{1}{n+1}) \cdot u \lt||\nabla|^{2s-\frac{\sigma}{2}}n\rt|^2 dx +J_1\\
&= -\frac{1}{2} \int_{\R^3} \frac{1}{(n+1)^2} \nabla n \cdot u \lt||\nabla|^{2s-\frac{\sigma}{2}} n\rt|^2 dx +J_1\\
&=-I_2+I_1.
\end{aligned}
\]
Finally, we use
\[
I_4 =\int_{\R^3}  \nabla |\nabla|^{2s-\sigma} n  \cdot |\nabla|^{2s} u \,dx,
\]
to deduce that
\bq\label{Riesz_2}
\begin{aligned}
	\frac{1}{2} \frac{d}{dt} \|n\|_{\dot{\mathcal{H}}^{2s-\frac{\sigma}{2}}}^2 -\int_{\R^3}  \nabla |\nabla|^{2s-\sigma} n  \cdot |\nabla|^{2s} u \,dx
	\lesssim \|\nabla w\|_{L^{\infty}} \|w\|_{\dot{H}^{2s-\frac{\sigma}{2}}}^2.
\end{aligned}
\eq
It follows from \eqref{energy0} and \eqref{Riesz_2} that
\bq\label{energy11}
\frac{1}{2} \frac{d}{dt} \lt( \|w\|_{\dot{H}^{2s}}^2 +\|n\|_{\dot{\mathcal{H}}^{2s-\frac{\sigma}{2}}}^2\rt) \lesssim \|\nabla w\|_{L^{\infty}} \lt(\|w\|_{\dot{H}^{2s}}^2+\|w\|_{\dot{H}^{2s-\frac{\sigma}{2}}}^2\rt)
\eq
for $0<\sigma<1$.

The subsequent $L^2$ estimate can be obtained as
	\bq\label{energy2}
	\begin{aligned}
		&\frac{1}{2} \frac{d}{dt} (\|w\|_{L^2}^2+ 
		\| \nabla n\|_{\dot{H}^{-1-\frac{\sigma}{2}}}^2)  \lesssim \|\nabla w\|_{L^{\infty}} \|w\|_{L^2}^2.
	\end{aligned}
	\eq
 Combining \eqref{energy1}, \eqref{energy11}, and \eqref{energy2} yields that  
\bq\label{energy41}
\begin{aligned}
\frac{1}{2}\frac{d}{dt} \Big(\|w\|_{H^{2s}} ^2 & + \Big\| \frac{\nabla}{|\nabla|^{1+\sigma/2}}n \Big\|_{H^{2s}}^2 
 \Big) 
  \lesssim \| w\|_{W^{1, \infty}} \|w\|_{H^{2s}}^2
 \lesssim \| w\|_{W^{1+\frac{3}{p}+\delta, p}} \|w\|_{X}^2
\end{aligned}
\eq
for $1\leq \sigma<2$ and sufficiently small $\delta>0$. Similarly,
\bq\label{energy42}
\begin{aligned}
	\frac{1}{2}\frac{d}{dt} \Big(\|w\|_{H^{2s}} ^2 &+ \sum_{l=1}^s \| n \|_{\dot{\mathcal{H}}^{2l-\frac{\sigma}{2}}}^2 + \Big\| \frac{\nabla}{|\nabla|^{1+\sigma/2}}n \Big\|_{L^2}^2 
	\Big) 
	\lesssim \| w\|_{W^{1+\frac{3}{p}+\delta, p}} \|w\|_{X}^2
\end{aligned}
\eq
for $0<\sigma<1$.
By applying the Gr\"onwall's lemma for \eqref{energy41} and \eqref{energy42}, and using the equivalences $\|n\|_{\dot{H}^s} \sim \|n\|_{\dot{\mathcal{H}}^s}$ and $
\lt\|\nabla  n(t)\rt\|_{\dot{H}^{-1-\frac{\sigma}{2}}} \sim \| |\nabla|^{-\sigma/2}n(t)\|_{L^2}$, we deduce
\[
\begin{aligned}
	\|w(t)\|_{H^{2s}} ^2 +  \| |\nabla|^{-\frac{\sigma}{2}}  n(t)\|_{H^{2s}}^2 
	& \lesssim \|w(0)\|_{Y}^2
	+ \|w(t)\|_X^2  \int_0^t \|w(\tau)\|_{W^{1+\frac{3}{p}+\delta, p}} \,d\tau\\
	&\lesssim \|w(0)\|_{Y}^2  +  \lt(\sup_{0<t<T^{\ast}}\|w(t)\|_{X}^3\rt) \int_0^t (1+\tau)^{-\beta_\sigma} \,d\tau \\
	&\lesssim \|w(0)\|_{Y}^2  + \sup_{0<t<T^{\ast}}\|w(t)\|_{X}^3 
\end{aligned}
\]
for $\beta_{\sigma}>1$ since $s>1+\frac{3}{p}$. 
It leads to the estimate \eqref{H^s_norm_sub}, thereby concluding the proof.
\end{proof}

%
%
%
%
%
%
%
%
%
\subsection{Proof of Theorem \ref{global}}
The local-in-time existence and uniqueness of a solution $\alpha \in \mathcal{C}([0, T^{\ast});X)$ can be established through the standard argument detailed in \cite{K75}. By Lemma \ref{H^{-1/2}_norm}, Lemma \ref{H^s_norm}, and Theorem \ref{Theorem_decay} we deduce that 
\[
\||\nabla|^{-\frac{2-\sigma}{2}}\alpha(t)\|_{H^{2s}}+(1+t)^{\beta_{\sigma}}\|\alpha(t)\|_{W^{s, p}} \lesssim \|\alpha(0)\|_{Y} +\sup_{0<t<T^{\ast}}\|\alpha\|_{X}^{3/2}
\]
holds provided that $\sup_{0<t<T^{\ast}} \|\alpha(t)\|_X \leq 1$. This inequality allows us to derive
\bq\label{uniform_estimate}
\sup_{0<t<T^{\ast}} \|\alpha (t)\|_{X} \leq C^{\ast}\|\alpha(0)\|_{Y} 
\eq
for any $T^{\ast}>0$.
We set 
\bq\label{vare_1}
\|\alpha(0)\|_{Y}^2=\varepsilon_2^2 := \frac{\varepsilon_1^2}{2(1+C^{\ast})}
\eq
for $\varepsilon_1 > 0$ and $C^{\ast} > 0$ given in \eqref{uniform_estimate}.
By the local well-posedness result, the following set is nonempty:
\bq\nonumber
\mathcal{S}:=\lt\{ T>0\, :\, \sup_{0\leq t\leq T} \lt( \| \alpha(t)\|_{\dot{H}^{-\frac{2-\sigma}{2}}\cap H^{2s}}^2 +(1+t)^{2\beta_{\sigma}}\|\alpha(t)\|_{W^{s, p}}^2\rt)\leq \varepsilon_1^2\rt\}.
\eq  
Assuming, for contradiction, that $\sup\mathcal{S}=: T^{\ast}<\infty$, it follows from \eqref{uniform_estimate} and \eqref{vare_1} that
\bq\nonumber
\varepsilon_1^2 =\sup_{0<t<T^{\ast}} \| \alpha(t)\|_{X}^2 \leq C^{\ast} \| \alpha(0)\|_{Y}^2  \leq C^{\ast} \varepsilon_2^2=\frac{C^{\ast}\varepsilon_1^2}{2(1+C^{\ast})}  <\varepsilon_1^2.
\eq 
This contradiction demonstrates that the solution indeed belongs to $\mathcal{C}(\mathbb{R}_{+}; X)$.

%
%
%
%
%
%
%
%
%

\section*{Acknowledgments}
The first and last authors are supported by NRF grants (No. 2022R1A2C1002820 and RS-2024-00406821.)
The second author was supported by NRF grant (No. RS-2022-00165600).
The last author is supported by NRF grant (No. NRF-2022R1I1A1A01068481).

\appendix

%
%
%
%
%
%
%
%
%
\section{Proof of Proposition \ref{Proposition_m}: Cases A \& C} \label{appendix_1}
This appendix is devoted to proving Proposition \ref{Proposition_m} for cases A and C.
\subsection{Case A}
In the case $|\xi|\leq|\eta|/2$,
we note that $|\xi|$ is the smallest value, and $|\xi-\eta| \sim |\eta|$ since
\[
\frac{|\eta|}2 \leq |\eta|-|\xi|\leq |\xi-\eta| \leq |\eta|+|\xi|\leq \frac{3|\eta|}2.
\]
Given that $p(r)=r^{(2-\sigma)/2}(1+r^{\sigma})^{1/2}$ is an increasing function, a direct computation shows 
\bq\label{low_Phi_3}
\begin{aligned}
	|\Phi|=|p(\xi)-p(\xi-\eta)-p(\eta)|
	& \gtrsim \max \{ p(\xi-\eta), p(\eta)\}\\
	&\gtrsim \lt\{\begin{array}{lcl} 
		|\eta|^{(2-\sigma)/2} & \mbox{for} & \max\{|\xi-\eta|, |\eta|\}<1,\\
		|\eta| & \mbox{for} & \max\{|\xi-\eta|, |\eta|\}\ge1,
	\end{array}\rt.
\end{aligned}
\eq
based on the equivalence $|\xi-\eta| \sim |\eta|$.

First, we claim $\mathfrak{M} \in  M_{\eta, \xi}^{k, \infty}$. For this, we consider two cases:
\begin{itemize}
	\item (A1)  $\max\{|\xi-\eta|, |\eta|\} < 1$, \\[-4mm]
	\item (A2) $\max\{|\xi-\eta|, |\eta|\} \ge 1$. \\[-4mm]
\end{itemize}

In case (A1), it follows that
\[
|\Phi| \gtrsim |\eta|^{\frac{2-\sigma}{2}}
\]
by \eqref{low_Phi_3}. Additionally, we find
\[
\begin{aligned}
	|\nabla_{\xi} \Phi| \lesssim \frac{1}{|\xi|^{\sigma/2}} +\frac{1}{|\xi-\eta|^{\sigma/2}} \sin \frac{\gamma}{2} \lesssim \frac{1}{|\xi|^{\sigma/2}} \quad \mbox{and} \quad 
	|\Delta_{\xi}\Phi|\lesssim \frac{1}{|\xi|^{1+\sigma/2}}
\end{aligned}
\]
by Lemma \ref{Lemma_deriv_Phi} along with the fact $|\xi|\leq |\xi-\eta|$. 
Then, we obtain
\bq\label{deri1_1/Phi_1}
\begin{aligned}
	\lt|\nabla_{\xi} (\frac{1}{\Phi})\rt|
	=\lt|-\frac{\nabla_{\xi}\Phi}{\Phi^2}\rt| 
	\lesssim \frac{1}{ |\eta|^{2-\sigma}|\xi|^{\sigma/2}}  
\end{aligned}
\eq
and 
\bq\label{deri2_1/Phi_1}
\begin{aligned}
	\lt|\Delta_{\xi} (\frac{1}{\Phi})\rt| &= \lt|-\frac{\Delta_\xi \Phi}{\Phi^2}+\frac{2|\nabla_{\xi} \Phi|^2}{\Phi^3}\rt| 
	\lesssim  \frac{1}{ |\eta|^{2-\sigma} |\xi|^{1+\sigma/2}}  +\frac{1}{ |\eta|^{3(2-\sigma)/2}|\xi|^{\sigma}} 
	\lesssim \frac{1}{ |\eta|^{2-\sigma} |\xi|^{1+\sigma/2}}	
\end{aligned}
\eq
due to $|\xi|\leq |\eta|$. Considering
\[
\mathfrak{M}(\xi, \eta)=\frac{|\xi|^{(2-\sigma)/2}|\xi-\eta|^{(2-\sigma)/2}|\eta|^{(2-\sigma)/2}} {\lal \xi-\eta\ral^{2 \lambda} \lal \eta\ral^{2 \lambda}\Phi(\xi, \eta)}= \frac{g(\xi, \eta)}{\Phi(\xi, \eta)}
\]
we deduce that
\[
|\mathfrak{M}| \lesssim \frac{|\xi|^{(2-\sigma)/2} |\eta|^{(2-\sigma)/2}}{\lal\eta\ral^{4\lambda}}
\]
due to $|\xi-\eta| \sim |\eta|$. By using \eqref{Del_M}, we apply \eqref{deri1_1/Phi_1} and \eqref{deri2_1/Phi_1} to derive
\[
\begin{aligned}
	|\Delta_{\xi}\mathfrak{M} |
	&\lesssim 
	\frac{|\xi|^{(2-\sigma)/2}|\eta|^{2-\sigma}}{\lal \eta\ral^{4\lambda}} \lt( 
	\frac{1}{ |\eta|^{2-\sigma} |\xi|^{1+\sigma/2}}  + \frac{1}{|\xi|^{2}|\eta|^{(2-\sigma)/2}} \rt)  \lesssim
	\frac{|\eta|^{(2-\sigma)/2}}{|\xi|^{1+\sigma/2}\lal \eta\ral^{4\lambda}} 
\end{aligned}
\]
for $|\xi|\leq |\eta|$. Consequently, we have
\[
\begin{aligned}
	\int_{\R^3} |\mathfrak{M} \varphi( \frac{\xi}{N})  |^2  \,d\xi
	\lesssim \int_{|\xi|\sim N}  \frac{|\eta|^{2-\sigma}|\xi|^{2-\sigma}}{\lal \eta \ral^{8\lambda} }  \,d\xi
	\lesssim \int_{\rho\sim N}  \frac{\rho^{4-\sigma}}{\lal \rho \ral^{8\lambda-2+\sigma} } \, d\rho
	\lesssim \frac{N^{5-\sigma}}{ \lal N\ral^{8\lambda-2+\sigma}}
\end{aligned}
\]
and
\[
\begin{aligned}
	\int_{\R^3} |\Delta_{\xi} \mathfrak{M} \varphi(\frac{\xi}{N})  |^2 \, d\xi 
	 \lesssim \int_{|\xi|\sim N}
	\frac{1}{|\xi|^{2+\sigma}\lal \eta\ral^{8\lambda-2+\sigma} } \, d\xi \lesssim \int_{\rho\sim N}
	\frac{1}{\rho^{\sigma}\lal \rho\ral^{8\lambda-2+\sigma} } \, d\rho
	\lesssim \frac{N^{1-\sigma}}{ \lal N\ral^{8\lambda-2+\sigma}}.
\end{aligned}
\]
By employing an interpolation argument, we derive
\[
\begin{aligned}
	\|\mathfrak{M} \varphi_N \|_{\dot{H}_{\xi}^{k}} 
	 =\| \mathfrak{M} \varphi_N   \|_{L_{\xi}^2}^{1-k/2}
	\| \mathfrak{M} \varphi_N  \|_{\dot{H}_{\xi}^2}^{k/2} \lesssim 
	\lt( \frac{N^{(5-\sigma)/2} }{ \lal N\ral^{4\lambda-1+\sigma/2}}\rt)^{1-k/2}\lt(\frac{N^{(1-\sigma)/2}}{ \lal N\ral^{4\lambda-1+\sigma/2}}\rt)^{k/2} = \frac{N^{(5-\sigma)/2-k} }{\lal N\ral^{4\lambda-1+\sigma/2}}
\end{aligned}
\]
for $k\leq 2$.
Hence, we arrive at
\[
\Big\| \sum_{N<1} \| \mathfrak{M} \varphi_N  \|_{\dot{H}_{\xi}^k} \Big\|_{L_{\eta}^{\infty}} \leq  \sum_{N<1} \frac{1}{  \lal N\ral^{4\lambda-7/2+\sigma+k}}<\infty
\]
if $\lambda>7/8-\sigma/4-k/4$ with $0\leq k\leq 2$.

In case (A2) where $\max\{|\xi-\eta|, |\eta|\} \ge 1$ holds, we first observe
\[
\begin{aligned}
	|\Phi| \gtrsim |\eta|,\quad 
	|\nabla_{\xi} \Phi| \lesssim \frac{1}{|\xi|^{\sigma/2}} +\frac{1}{|\xi-\eta|^{\sigma/2}} \lesssim \frac{1}{|\xi|^{\sigma/2}}, \quad \text{and} \quad
	|\Delta_{\xi}\Phi|\lesssim \frac{1}{|\xi|^{1+\sigma/2}}
\end{aligned}
\]
by \eqref{low_Phi_3} and Lemma \ref{Lemma_deriv_Phi} when $|\xi| <1 $.
These estimates lead to
\[
\begin{aligned}
	\lt|\nabla_{\xi} (\frac{1}{\Phi})\rt|
	&\lesssim \frac{1}{ |\eta|^2|\xi|^{\sigma/2}} \quad \mbox{and} \quad 
	\lt|\Delta_{\xi} (\frac{1}{\Phi})\rt| 
	&\lesssim 
	\frac{1}{ |\eta|^2 |\xi|^{1+\sigma/2}} +\frac{1}{ |\eta|^{3}|\xi|^{\sigma}}.  
\end{aligned}
\]
By using \eqref{M_g}, we get
\[
|\mathfrak{M}| \lesssim \frac{|\xi|^{(2-\sigma)/2}|\eta|^{1-\sigma}}{\lal \eta\ral^{4\lambda}}
\]
and
\[
\begin{aligned}
	|\Delta_{\xi}\mathfrak{M} |
	&\lesssim 
	\frac{|\xi|^{(2-\sigma)/2}|\eta|^{2-\sigma}}{\lal \eta\ral^{4\lambda}}
	\lt( \frac{1}{ |\eta|^{3}|\xi|^{\sigma}} +
	\frac{1}{ |\eta|^2 |\xi|^{1+\sigma/2}}  
	+ \frac{1}{ |\eta| |\xi|^{2}} 
	\rt)  
	\lesssim 
	\frac{|\eta|^{1-\sigma}}{ |\xi|^{1+\sigma/2}\lal \eta\ral^{4\lambda}}
\end{aligned}
\]
since $|\eta| \gtrsim 1$ and $|\xi|< 1$. Subsequently, we obtain
\bq\label{A2_m}
	\int_{\R^3} |\mathfrak{M} \varphi ( \frac{\xi}{N} )  |^2  \, d\xi
	 \lesssim \int_{|\xi|\sim N}  \frac{|\xi|^{2-\sigma}}{\lal \eta \ral^{8\lambda-2+2\sigma} } \, d\xi  \lesssim \int_{\rho\sim N}  \frac{\rho^{4-\sigma}}{\lal \eta \ral^{8\lambda-2+2\sigma} } \, d\rho
	\lesssim \frac{N^{5-\sigma}}{ \lal N\ral^{8\lambda-2+2\sigma}}, 
\eq
\[
	\int_{\R^3} |\Delta_{\xi} \mathfrak{M} \varphi(\frac{\xi}{N})  |^2 \, d\xi 
	\lesssim \int_{|\xi|\sim N}
	\frac{1}{|\xi|^{2+\sigma} \lal \eta\ral^{8\lambda-2+2\sigma}}   \,d\xi  \lesssim \int_{\rho\sim N}
	\frac{1}{ \rho^{\sigma}\lal \rho\ral^{8\lambda-2+2\sigma}} \, d\rho
	\lesssim \frac{N^{1-\sigma}}{\lal N \ral^{8\lambda-2+2\sigma}} 
\]
which yields that 
\[
\begin{aligned}
	\|\mathfrak{M} \varphi_N \|_{\dot{H}_{\xi}^{k}} 
	 =\| \mathfrak{M} \varphi_N   \|_{L_{\xi}^2}^{1-k/2}
	\| \mathfrak{M} \varphi_N  \|_{\dot{H}_{\xi}^2}^{k/2}  \lesssim  \lt(\frac{N^{(5-\sigma)/2} }{ \lal N\ral^{4\lambda-1+\sigma}} \rt)^{1-k/2} \lt(\frac{N^{(1-\sigma)/2}}{\lal N \ral^{4\lambda-1+\sigma}}\rt)^{k/2}
	\lesssim 
	\frac{N^{(5-\sigma)/2-k} }{\lal N\ral^{4\lambda-1+\sigma}}
\end{aligned}
\]
for $k\leq 2$ and $|\xi|<1$. Hence, we have
\bq \label{A2_1}
\begin{aligned}
	\sum_{N<1} \| \mathfrak{M} \varphi_N  \|_{L_{\eta}^{\infty}\dot{H}_{\xi}^k} 
	\lesssim \sum_{N<1}  \frac{1}{ \lal N\ral^{4\lambda-7/2+3\sigma/2+k}} <\infty
\end{aligned}
\eq
for $0\leq k\leq 3/2$ and $\lambda>7/8-3\sigma/8-k/4$.

When $|\xi|\ge1$, we obtain
\[
|\Phi| \gtrsim |\eta|,\quad
|\nabla_{\xi} \Phi| \lesssim  |\eta|+2\sin \frac{\gamma}{2}  \lesssim |\eta|, \quad \text{and} \quad
|\Delta_{\xi}\Phi| \lesssim |\eta| 
\]
by using \eqref{low_Phi_3} and Lemma \ref{Lemma_deriv_Phi} along with \eqref{sin_gamma}. This implies
\[
\begin{aligned}
	|\nabla_{\xi} (\frac{1}{\Phi})| 
	\lesssim  \frac{1}{|\eta|} \quad \mbox{and} \quad 
	|\Delta_{\xi}(\frac{1}{\Phi})| \lesssim \frac{1}{|\eta|} 
\end{aligned}
\]
leading to
\[
\begin{aligned}
	|\Delta_{\xi}\mathfrak{M} |
	\lesssim 
	\frac{|\xi|^{(2-\sigma)/2}|\eta|^{1-\sigma}}{\lal \eta\ral^{4\lambda}} \lt( 1 + \frac{1}{|\xi| } +\frac{1}{|\xi|^{2}} \rt)  
	\lesssim \frac{|\xi|^{(2-\sigma)/2}|\eta|^{1-\sigma}}{\lal \eta \ral^{4\lambda}}
\end{aligned}
\]
due to $ |\xi| \ge 1$. Then, it is easy to derive
\[
\begin{aligned}
	\int_{\R^3} |\Delta_{\xi}  \mathfrak{M} \varphi(\frac{\xi}{N}) |^2 \, d\xi 
	\lesssim 
	\int_{|\xi|\sim N} \frac{|\xi|^{2-\sigma}}{\lal \eta \ral^{8\lambda-2+2\sigma}} \,   d\xi 
	\lesssim  \frac{N^{5-\sigma}}{\lal N \ral^{8\lambda-2+2\sigma}} .
\end{aligned}
\]
By using this along with \eqref{A2_m}, we deduce that
\[
\begin{aligned}
	\|\mathfrak{M} \varphi_N \|_{\dot{H}_{\xi}^{k}} 
	\lesssim \frac{N^{(5-\sigma)/2}}{\lal N \ral^{4\lambda-1+\sigma} } 
\end{aligned}
\]
for $0\leq k\leq 2$ when $|\xi|\ge 1$. Thus, we have
\[
\begin{aligned}
	\sum_{N\ge1} \| \mathfrak{M} \varphi_N  \|_{L_{\eta}^{\infty}\dot{H}_{\xi}^k} 
	\lesssim  \sum_{N\ge1} \frac{1}{ \lal N\ral^{4\lambda-7/2+3\sigma/2}} <\infty
\end{aligned}
\]
for $\lambda>7/8-3\sigma/8$ and $0\leq k\leq 2$.
Consequently, by \eqref{A2_1} and the above inequality, we conclude that $\mathfrak{M} \in M_{\eta, \xi}^{k, \infty}$ for $\lambda>7/8-3\sigma/8$ for $0\leq k\leq 3/2$.

Next, we show $\mathfrak{M} \in M_{\xi, \eta}^{k, \infty}$ by dividing it into the following cases:
\begin{itemize}
	\item (A$'$1) $\max\{|\xi-\eta|, |\eta|\} < 1$, \\[-4mm]
	\item (A$'$2) $\min\{|\xi-\eta|, |\eta|\} \ge 1$, \\[-4mm]
	\item (A$'$3) $\max\{|\xi-\eta|, |\eta| \}\ge 1$ and $\min\{|\xi-\eta|, |\eta|\} < 1$.\\[-4mm]
\end{itemize}

In the case (A$'$1), we observe
\[
|\Phi| \gtrsim |\eta|^{\frac{2-\sigma}{2}}, \quad  |\nabla_{\eta}\Phi| \lesssim \frac{1}{|\eta|^{\sigma/2}} , \quad \mbox{and} \quad |\Delta_{\eta} \Phi| \lesssim \frac{1}{|\eta|^{1+\sigma/2}}
\]
leading to
\[
|\nabla_{\eta} (\frac{1}{\Phi})| \lesssim \frac{1}{|\eta|^{2-\sigma/2}} \quad \mbox{and} \quad |\Delta_{\eta}(\frac{1}{\Phi})| \lesssim \frac{1}{|\eta|^{3-\sigma/2}}.
\]
It is easy to verify that
\[
\begin{aligned}
	|\mathfrak{M}|&\lesssim \frac{|\xi|^{(2-\sigma)/2}|\eta|^{(2-\sigma)/2}}{\lal \xi\ral^{2\lambda} \lal \eta \ral^{2\lambda} } \quad \mbox{and} \quad 
	|\Delta_{\eta}\mathfrak{M}|  \lesssim \frac{|\xi|^{(2-\sigma)/2}}{ |\eta|^{1+\sigma/2}\lal \xi\ral^{2\lambda} \lal \eta\ral^{2\lambda} }
\end{aligned}
\]
which subsequently implies
\[
\begin{aligned}
	\int_{\R^3} |\mathfrak{M} \varphi ( \frac{\eta}{N}) |^2 \, d\eta 
	&\lesssim \int_{\rho\sim N}  \frac{\rho^{4-\sigma} |\xi|^{2-\sigma}}{\lal \rho \ral^{4\lambda}\lal \xi \ral^{4\lambda} }  \, d\rho 
	\lesssim     \frac{N^{5-\sigma} }{\lal N \ral^{4\lambda}\lal \xi\ral^{4\lambda-2+\sigma} }, \\
	\int_{\R^3} |\Delta_{\eta}  \mathfrak{M} \varphi(\frac{\eta}{N}) |^2 \, d\eta 
	& \lesssim \int_{\rho \sim N} \frac{|\xi|^{2-\sigma} }{\rho^{\sigma} \lal \rho\ral^{4\lambda}\lal \xi \ral^{4\lambda} } \, d\rho 
	\lesssim  \frac{N^{1-\sigma}}{\lal N \ral^{4\lambda}\lal \xi\ral^{4\lambda-2+\sigma} }.
\end{aligned}
\]
By using an interpolation argument, we obtain 
\[
\begin{aligned}
	\|\mathfrak{M} \varphi_N \|_{\dot{H}_{\eta}^{k}} 
	 \lesssim   \lt(  \frac{N^{(5-\sigma)/2} }{\lal N \ral^{2\lambda}\lal \xi\ral^{2\lambda-1+\sigma/2} } \rt)^{1-k/2}  \lt(  \frac{N^{(1-\sigma)/2}}{\lal N \ral^{2\lambda}\lal \xi\ral^{2\lambda-1+\sigma/2} } \rt)^{k/2} = \frac{N^{(5-\sigma)/2-k}}{\lal N \ral^{2\lambda}\lal \xi\ral^{2\lambda-1+\sigma/2} }.
\end{aligned}
\]
For $\lambda> (5-\sigma)/4-k/2$ for $0\leq k\leq 3/2$, it is summable over $N$, establishing $\mathfrak{M}\in M_{\xi, \eta}^{k, \infty}$.

In the case (A$'$2), we get
\[
|\Phi| \gtrsim |\eta|, \quad  |\nabla_{\eta}\Phi| \lesssim |\xi|+ \sin \frac{\beta}{2} \lesssim |\xi| ,\quad \mbox{and} \quad |\Delta_{\eta} \Phi| \lesssim |\xi|,
\]
due to \eqref{sin_beta} for $|\eta|\ge1$. Subsequently, it follows that
\[
|\nabla_{\eta} (\frac{1}{\Phi})| \lesssim \frac{|\xi|}{|\eta|^{2}} \quad \mbox{and} \quad |\Delta_{\eta}(\frac{1}{\Phi})| \lesssim \frac{|\xi|}{|\eta|^2}
\]
owing to $|\xi| \leq |\eta|$. Now, we use $|\eta| \ge 1$ and $|\xi|\leq |\eta|$ to derive
\[
\begin{aligned}
	|\mathfrak{M}| \lesssim \frac{|\eta|^{1-\sigma}|\xi|^{(2-\sigma)/2}}{\lal \eta \ral^{2\lambda} \lal \xi\ral^{2\lambda}} \quad \mbox{and} \quad
	|\Delta_{\eta}\mathfrak{M}|  \lesssim \frac{|\eta|^{1-\sigma}|\xi|^{(2-\sigma)/2}}{\lal \eta \ral^{2\lambda} \lal \xi \ral^{2\lambda}} \lt( \frac{|\xi|}{|\eta|}+\frac{|\xi|}{|\eta|^2}+\frac{1}{|\eta|^2}\rt) \lesssim \frac{|\eta|^{1-\sigma}|\xi|^{(2-\sigma)/2}}{\lal \eta \ral^{2\lambda} \lal \xi \ral^{2\lambda}}.
\end{aligned}
\]
This leads to
\[
\begin{aligned}
	\int_{\R^3} |\mathfrak{M} \varphi ( \frac{\eta}{N}) |^2\,d\eta 
	&\lesssim \int_{\rho\sim N}  \frac{\rho^{4-2\sigma} |\xi|^{2-\sigma}}{\lal \rho \ral^{4\lambda}\lal \xi \ral^{4\lambda} }\, d\rho 
	\lesssim     \frac{N^{5-2\sigma} |\xi|^{2-\sigma}}{\lal N \ral^{4\lambda}\lal \xi\ral^{4\lambda} }, \\
	\int_{\R^3} |\Delta_{\eta}  \mathfrak{M} \varphi(\frac{\eta}{N}) |^2 \, d\eta 
	& \lesssim \int_{\rho \sim N} \frac{\rho^{4-2\sigma}|\xi|^{2-\sigma} }{ \lal \rho\ral^{4\lambda}\lal \xi \ral^{4\lambda} } \, d\rho 
	\lesssim  \frac{N^{5-2\sigma}|\xi|^{2-\sigma}}{\lal N \ral^{4\lambda}\lal \xi\ral^{4\lambda} },
\end{aligned}
\]
and consequently, we arrive at 
\[
\begin{aligned}
	\|\mathfrak{M} \varphi_N \|_{\dot{H}_{\eta}^{k}} 
	&\lesssim   \frac{N^{5/2-\sigma}|\xi|^{1-\sigma/2}}{\lal N \ral^{2\lambda}\lal \xi\ral^{2\lambda} }\lesssim  \frac{1}{\lal N \ral^{2\lambda-5/2+\sigma}\lal \xi\ral^{2\lambda-1+\sigma/2} }
\end{aligned}
\]
for $0\leq k\leq 2$.
It is summable over $N$ whenever $\lambda> 5/4-\sigma/2$. Hence, we have $\mathfrak{M}\in M_{\xi, \eta}^{k, \infty}$. 

We now consider the case (A$'$3). In this case, we get
\[
|\Phi| \gtrsim |\eta|, \quad  |\nabla_{\eta}\Phi| \lesssim \frac{1}{|\eta|^{\sigma/2}},\quad \mbox{and} \quad |\Delta_{\eta} \Phi| \lesssim \frac{1}{|\eta|^{1+\sigma/2}},
\]
which leads to
\[
|\nabla_{\eta} (\frac{1}{\Phi})| \lesssim \frac{1}{|\eta|^{2+\sigma/2}} ,\quad |\Delta_{\eta}(\frac{1}{\Phi})| \lesssim \frac{1}{|\eta|^{3+\sigma/2}}
\]
due to $|\eta| \gtrsim 1$.
Moreover, we estimate
\[
\begin{aligned}
	|\mathfrak{M}| \lesssim \frac{|\eta|^{1-\sigma}|\xi|^{(2-\sigma)/2}}{ \lal \eta \ral^{2\lambda}\lal \xi\ral^{2\lambda}} \quad \mbox{and} \quad
	|\Delta_{\eta}\mathfrak{M}|  \lesssim \frac{|\xi|^{(2-\sigma)/2}}{\lal \eta \ral^{2\lambda} \lal \xi \ral^{2\lambda}}  \lt( \frac{1}{|\eta|^{1+3\sigma/2}}+\frac{1}{|\eta|^{1+\sigma}}\rt) \lesssim  \frac{|\xi|^{(2-\sigma)/2}}{\lal \eta \ral^{2\lambda} \lal \xi \ral^{2\lambda}}. 
\end{aligned}
\]
Consequently,
\[
\begin{aligned}
	\int_{\R^3} |\mathfrak{M} \varphi ( \frac{\eta}{N}) |^2 \,d\eta 
	&\lesssim \int_{\rho\sim N}  \frac{\rho^{4-2\sigma} |\xi|^{2-\sigma}}{\lal \rho \ral^{4\lambda}\lal \xi \ral^{4\lambda} } \,d\rho 
	\lesssim     \frac{N^{5-2\sigma}|\xi|^{2-\sigma}}{\lal N \ral^{4\lambda}\lal \xi\ral^{4\lambda} }, \\
	\int_{\R^3} |\Delta_{\eta}  \mathfrak{M} \varphi(\frac{\eta}{N}) |^2 \, d\eta 
	& \lesssim \int_{\rho \sim N} \frac{ \rho^{2}|\xi|^{2-\sigma} }{ \lal \rho\ral^{4\lambda}\lal \xi \ral^{4\lambda} }\,d\rho 
	\lesssim  \frac{N^3|\xi|^{2-\sigma}}{\lal N \ral^{4\lambda}\lal \xi\ral^{4\lambda} },
\end{aligned}
\]
and we have
\[
\begin{aligned}
\|\mathfrak{M} \varphi_N \|_{\dot{H}_{\eta}^{k}} 
&\lesssim   \lt(  \frac{N^{(5-2\sigma)/2} |\xi|^{(2-\sigma)/2}}{\lal N \ral^{2\lambda}\lal \xi\ral^{2\lambda} } \rt)^{1-k/2}  \lt(  \frac{N^{3/2} |\xi|^{(2-\sigma)/2}}{\lal N \ral^{2\lambda}\lal \xi\ral^{2\lambda} } \rt)^{k/2} \\
&\lesssim   \frac{N^{5/2-\sigma+(\sigma-1)k/2} |\xi|^{(2-\sigma)/2}}{\lal N \ral^{2\lambda}\lal \xi\ral^{2\lambda} }\cr
&\lesssim  \frac{1}{\lal N \ral^{2\lambda-5/2+\sigma-(\sigma-1)k/2} \lal \xi\ral^{2\lambda-(2-\sigma)/2} }
\end{aligned}
\]
which obviously implies $\mathfrak{M}\in M_{\xi, \eta}^{k, \infty} $ for $0\leq k\leq 2$ if $\lambda>5/4-\sigma/2+(\sigma-1)k/4$.

%
%
%
%
%
%
%
%
%
\subsection{Case C}
We now consider the case $|\eta|/3 <|\xi|<3|\eta|$, implying that $|\xi|\sim |\eta|$. To make use of Lemmas \ref{Lemma_low_Phi} and  \ref{Lemma_deriv_Phi}, we further subdivide (Case C) into the subsequent cases:
\begin{itemize}
	\item (C1)  $|\xi-\eta|$ is the smallest and $|\xi-\eta| <1$,\\[-4mm]
	\item (C2)  $|\xi-\eta|$ is the smallest and $|\xi-\eta|\ge 1$,\\[-4mm]
	\item (C3)  $|\xi|$ is the smallest and $\max\{|\xi-\eta|, |\eta|\} < 1$, \\[-4mm]
	\item (C4)  $|\xi|$ is the smallest and $\max\{|\xi-\eta|, |\eta|\} \ge 1$, \\[-4mm]
	\item (C5)  $|\eta|$ is the smallest and $\min\{|\xi-\eta|, |\xi|\}<1$, \\[-4mm]
	\item (C6)  $|\eta|$ is the smallest and $\min\{|\xi-\eta|, |\xi|\}\ge 1$. \\[-4mm]
\end{itemize}
We first consider the case (C1). We observe
\[
	|\Phi|\gtrsim \frac{|\xi-\eta|^{(2-\sigma)/2}}{\lal \eta\ral^{\sigma/2}} , \quad 
	|\nabla_{\xi}\Phi| 
	\lesssim \frac{1}{|\xi-\eta|^{\sigma/2}} + \frac{1}{|\xi-\eta|^{\sigma/2}} \sin \frac{\gamma}{2}\lesssim   \frac{1}{|\xi-\eta|^{\sigma/2}},
	\]
	and
	\[
	|\Delta_{\xi}\Phi| \lesssim \frac{1}{|\xi-\eta|^{1+\sigma/2}}
\]
by using \eqref{low_Phi_2} and Lemma \ref{Lemma_deriv_Phi}.
Consequently,
\[
\begin{aligned}
	|\nabla_{\xi} (\frac{1}{\Phi})| 
	\lesssim \frac{\lal \eta \ral^{\sigma}}{ |\xi-\eta|^{2-\sigma/2} }  
	\quad \text{and} \quad
	|\Delta_{\xi}(\frac{1}{\Phi})| 
	\lesssim \frac{\lal \eta \ral^{3\sigma/2}}{|\xi-\eta|^{3-\sigma/2}}
\end{aligned}
\]
followed by
\[
\begin{aligned}
	|\mathfrak{M}|
	 \lesssim \frac{|\eta|^{2-\sigma}}{ \lal \xi-\eta \ral^{2\lambda}\lal \eta \ral^{2\lambda-\sigma/2} } \quad \text{and} \quad
	|\Delta_{\xi}\mathfrak{M} |
	  \lesssim \frac{|\eta|^{2-\sigma}}{ |\xi-\eta|^{2}\lal \xi-\eta \ral^{2\lambda} \lal \eta \ral^{2\lambda-3\sigma/2} }.
\end{aligned}
\]
We then deduce that
\[
\begin{aligned}
	\int_{\R^3} |\mathfrak{M} \varphi ( \frac{\xi-\eta}{N}) |^2  \,d\xi 
	&\lesssim \int_{\rho\sim N}  \frac{\rho^2|\eta|^{4-2\sigma}}{\lal \rho \ral^{4\lambda} \lal \eta\ral^{4\lambda-\sigma}}  \, d\rho
	\lesssim    \frac{N^3}{\lal N \ral^{4\lambda} \lal \eta\ral^{4\lambda-4+\sigma}}, \\
	\int_{\R^3} |\Delta_{\xi}  \mathfrak{M} \varphi(\frac{\xi-\eta}{N}) |^2  \, d\xi 
	&\lesssim \int_{\rho \sim N} \frac{ |\eta|^{4-2\sigma}}{ \rho^2 \lal \rho \ral^{4\lambda} \lal \eta \ral^{4\lambda-3\sigma} }  \, d\rho
	\lesssim  \frac{1}{N\lal N\ral^{4\lambda} \lal \eta\ral^{4\lambda-4-\sigma} }.
\end{aligned}
\]
By using an interpolation argument, we obtain
\[
\begin{aligned}
	\|\mathfrak{M} \varphi_N \|_{\dot{H}_{\xi}^{k}} 
	&\lesssim   \lt( \frac{N^{3/2}}{\lal N \ral^{2\lambda} \lal \eta\ral^{2\lambda-2+\sigma/2}} \rt)^{1-k/2}  \lt( \frac{1}{N^{1/2}\lal N\ral^{2\lambda} \lal \eta\ral^{2\lambda-2-\sigma/2} } \rt)^{k/2} \\
	&=
	\frac{N^{3/2-k}}{\lal N \ral^{2\lambda} \lal \eta\ral^{2\lambda -2 -\sigma(k-1)/2 } }     
	\lesssim
	\frac{1}{\lal N\ral^{2\lambda -3/2+k}}
\end{aligned}
\]
for $0\leq k\leq 3/2$ if $\lambda> 1+\sigma(k-1)/4$.
It is summable over $N$ when $\lambda>3/4-k/2$, thereby establishing $\mathfrak{M} \in M_{\eta, \xi}^{k, \infty}$ for $0\leq k\leq 3/2$.

We next consider the case (C2). Applying \eqref{low_Phi_2} gives
\[
\begin{aligned}
	|\Phi| 
	&\gtrsim|\xi-\eta|\lt( \frac{1}{1+|\xi|^{\sigma}|\xi-\eta|^{\sigma}}+\gamma^2 +\theta^2 \rt) \gtrsim |\xi-\eta| \lt(\frac{1}{1+|\xi|^{2\sigma} +|\eta|^{2\sigma}}+\beta^2 \rt) =|\xi-\eta| (d^2+\beta^2 ) 
\end{aligned}
\]
with $d^2 =\frac{1}{1+|\xi|^{2\sigma} +|\eta|^{2\sigma}}$.
Since $\min\{|\xi-\eta|, |\xi|\}=|\xi-\eta|\ge1$, we get 
\[
|\nabla_{\xi} \Phi| \lesssim |\eta|+\sin \frac{\gamma}{2}\lesssim |\eta| \quad \text{and} \quad |\Delta_{\xi}\Phi| \lesssim |\eta|
\]
by Lemma \ref{Lemma_deriv_Phi} together with \eqref{sin_gamma} and $|\xi|\ge 1$. Then, it follows that 
\[
\begin{aligned}
	|\nabla_{\xi} (\frac{1}{\Phi})| 
	&\lesssim \frac{|\eta|}{ |\xi-\eta|^2 (d^2 +\beta^2 )^2},\\
	|\Delta_{\xi}(\frac{1}{\Phi})| 
	&\lesssim \frac{|\eta|}{|\xi-\eta|^2(d^2+\beta^2)^2} 
	+\frac{|\eta|^2}{|\xi-\eta|^{3}(d^2  +\beta^2)^3} 
	\lesssim \frac{|\eta|^2}{|\xi-\eta|^{3}(d^2  +\beta^2)^3}
\end{aligned}
\]
owing to $|\xi-\eta| \leq |\eta|$ and $d^2 +\beta^2 \lesssim 1$. This leads to
\[
\begin{aligned}
	|\mathfrak{M} | 
	& \lesssim \frac{|\eta|^{2-\sigma}}{|\xi-\eta|^{\sigma/2}  \lal \xi-\eta \ral^{2\lambda}\lal \eta \ral^{2\lambda} (d^2+\beta^2)}   \lesssim \frac{|\eta|^{2-\sigma}}{  \lal \xi-\eta \ral^{2\lambda}\lal \eta \ral^{2\lambda} (d^2+\beta^2)},   \\
	|\Delta_{\xi}\mathfrak{M} |
	& \lesssim \frac{|\eta|^{2-\sigma}}{|\xi-\eta|^{2+\sigma/2}\lal \xi-\eta \ral^{2\lambda}\lal \eta \ral^{2\lambda} }\cdot \Big(   \frac{|\eta|^2}{(d^2+\beta^2)^3}+\frac{|\eta|}{(d^2+\beta^2)^2}+ \frac{1}{d^2+\beta^2}  \Big)  \lesssim \frac{|\eta|^{4-\sigma}}{\lal \xi-\eta \ral^{2\lambda}\lal \eta \ral^{2\lambda} (d^2+\beta^2)^3} .
\end{aligned}
\]
since $|\eta| \ge |\xi-\eta | \ge 1$. By employing spherical coordinates with the north pole $-\frac{\eta}{|\eta|}$, we derive
\[
\begin{aligned}
	\int_{\R^3} |\mathfrak{M} \varphi ( \frac{\xi-\eta}{N}) |^2 \, d\xi 
	&\lesssim \int_{\rho\sim N}  \frac{\rho^2|\eta|^{4-2\sigma}}{\lal \rho \ral^{4\lambda}\lal \eta \ral^{4\lambda} } \lt( \int_{d^2 \leq \beta^2} \frac{\sin \beta}{\beta^4 } \, d\beta +\int_{\beta^2 <d^2} \frac{\sin \beta}{d^4} \,d\beta\rt)  d\rho  \lesssim     \frac{N^3 |\eta|^{4-2\sigma}}{\lal N \ral^{4\lambda} \lal\eta\ral^{4\lambda-2\sigma} }
	\end{aligned}
\]
and
\[
\begin{aligned}
	\int_{\R^3} |\Delta_{\xi}  \mathfrak{M} \varphi(\frac{\xi-\eta}{N}) |^2 \, d\xi 
	&\lesssim \int_{\rho \sim N} \frac{\rho^2 |\eta|^{8-2\sigma}}{ \lal \rho\ral^{4\lambda}\lal \eta \ral^{4\lambda}}\lt( \int_{d^2 \leq \beta^2} \frac{\sin \beta}{\beta^{12}} \, d\beta +\int_{\beta^2 <d^2} \frac{\sin \beta}{d^{12}} \, d\beta \rt)d\rho  \lesssim   \frac{ N^3|\eta|^{8-2\sigma} }{ \lal N \ral^{4\lambda}  \lal \eta \ral^{4\lambda-10\sigma} }
\end{aligned}
\]
by noting that $d^2 \sim 1/\lal \eta\ral^{2\sigma}$.  Hence, we obtain
\[
\begin{aligned}
	\|\mathfrak{M} \varphi_N \|_{\dot{H}_{\xi}^{k}} 
	&\lesssim   \lt( \frac{N^{3/2} |\eta|^{2-\sigma} }{\lal N \ral^{2\lambda} \lal\eta\ral^{2\lambda-\sigma} }\rt)^{1-k/2}  \lt(  \frac{N^{3/2} |\eta|^{4-\sigma} }{  \lal N \ral^{2\lambda} \lal \eta \ral^{2\lambda-5\sigma}} \rt)^{k/2}  =
	\frac{N^{3/2}|\eta|^{2-\sigma+k}}{\lal N\ral^{2\lambda} \lal \eta\ral^{2\lambda-\sigma-2\sigma k} }     
	\lesssim
	\frac{1}{\lal N\ral^{2\lambda -3/2}}
\end{aligned}
\]
if $\lambda >1+\sigma k +k/2$ for $0\leq k\leq 2$. It is summable over $N$, leading to the desired conclusion.

Cases (C3)-(C6) can be addressed by analogously applying the methodology used for Cases A and B, owing to the significant observation that $|\xi|\sim |\xi-\eta|\sim |\eta| $. Specifically, in case (C3), we establish
\[
\begin{aligned}
	|\Phi| \gtrsim|\eta|^{(2-\sigma)/2}, \quad	|\nabla_{\xi}\Phi| 
	\lesssim \frac{1}{|\xi-\eta|^{\sigma/2}} \sim \frac{1}{|\xi|^{\sigma/2}}, \quad \mbox{and} \quad |\Delta_{\xi}\Phi| \lesssim \frac{1}{|\xi-\eta|^{1+\sigma/2}}\sim \frac{1}{|\xi|^{1+\sigma/2}}.
\end{aligned}
\]
By following the procedure employed for case (A1), we arrive at a result akin to (A1). Similarly, cases (C4), (C5), and (C6) can be deduced by a similar approach used for cases (A2), (B1), and (B2) respectively. 

For $\mathfrak{M}\in M_{\xi, \eta}^{k, \infty}$, we take into account the following cases; 
\begin{itemize}
	\item (C$'$1) $|\xi-\eta|$ is the smallest and $|\xi-\eta|<1 $, \\[-4mm]
	\item (C$'$2) $|\xi-\eta|$ is the smallest and $|\xi-\eta|\ge 1$,\\[-4mm]
	\item (C$'$3) $|\xi|$ is the smallest, \\[-4mm]	
	\item (C$'$4) $|\eta|$ is the smallest.\\[-4mm]
\end{itemize}
Since $|\xi|\sim |\eta|$, it is easy to see that the cases (C$'$1) and (C$'$2) can be handled in a manner akin to that of cases (C1) and (C2) respectively. As previously indicated, cases (C$'$3) and (C$'$4) are addressed in a similar manner to that employed for cases A and B, benefiting from the observation that $|\xi|\sim |\xi-\eta|\sim |\eta|$.

%
%
%
%
%
%
%
%
%
\section{Asymptotic behavior of $|p''(r)|$}\label{appendix_3}
In this part, we investigate the asymptotic behavior of $|p''(r)|$. Note that 
\[
p''(r)=\frac{2\sigma(\sigma-1)r^{\sigma}-\sigma(2-\sigma)}{4 r^{1+\sigma/2}(r^{\sigma}+1)^{3/2}},
\]
for $0<\sigma<2$. For this, we mainly separate cases $0 < \sigma \leq 1$ and $1 < \sigma < 2$.

When $0<\sigma<1$, we note that $p''(r)$ is negative, and thus
\[
|p''(r)|=\frac{2\sigma(1-\sigma)r^{\sigma}+\sigma(2-\sigma)}{4r^{1+\sigma/2}(r^{\sigma}+1)^{3/2}}  \sim 
\begin{cases}
	r^{-1-\sigma} \qquad \ \ \text{if }  r\ge 1\\
	r^{-1-\sigma/2}
	\qquad \text{if } r<1\\
\end{cases}
\]
due to $1+r^{\sigma} \sim r^{\sigma}$ when $r\ge 1$ and $1+r^{\sigma} \sim 1$ when $r<1$.
For the case $\sigma=1$, the expression simplifies to
\[
|p''(r)|=\frac{1}{4 r^{3/2}(r+1)^{3/2}} \sim 
\begin{cases}
	r^{-3} \qquad \ \ \text{if }  r\ge 1\\
	r^{-3/2}
	\qquad \text{if } r<1.\\
\end{cases}
\]

In contrast to the case $0<\sigma\leq 1$, the case $1<\sigma<2$ introduce a degenerate point at
\[
r_0 =\lt( \frac{2-\sigma}{2(\sigma-1)}\rt)^{1/\sigma},
\]
where $p''(r_0)=0$. Then, it can be verified that $p''(r)>0$ (respectively $p''(r)<0$) when $r>r_0$ (respectively $r<r_0$). 
When $r<r_0$, we let $r^{\sigma}=r_0^{\sigma}-c^{\ast}$ for some $c^{\ast}>0$. Then, we get
\[
|p''(r)|=\frac{\sigma(2-\sigma)-2\sigma(\sigma-1)r^{\sigma}}{4 r^{1+\sigma/2}(r^{\sigma}+1)^{3/2}} = 
\frac{2\sigma(\sigma-1)c^{\ast}}{4 r^{1+\sigma/2}(r^{\sigma}+1)^{3/2}} \sim
	\frac{1}{r^{1+2\sigma}} , \qquad 1\leq r <r_0
\]
under the condition $1<\sigma\leq 4/3$ (i.e., $r_0\ge1$). Additionally, it follows that
\[
|p''(r)|\sim
\frac{1}{r^{1+\sigma/2}} , \qquad r<1
\]
whenever $1<\sigma<2$.

Similarly for $r>r_0$, letting $r^{\sigma}=r_0^{\sigma}+c^{\ast}$ with some $c^{\ast}>0$ yields
\[
|p''(r)|= \frac{2\sigma(\sigma-1)r^{\sigma}-\sigma(2-\sigma)}{4 r^{1+\sigma/2}(r^{\sigma}+1)^{3/2}}\sim
\frac{1}{r^{1+\sigma/2}} , \qquad r_0<r<1
\]
for $4/3<\sigma<2$ which means $r_0 <1$. Moreover, we obtain
\bq\label{note_7}
|p''(r)| = \frac{2\sigma(\sigma-1)r^{\sigma}-\sigma(2-\sigma)}{4 r^{1+\sigma/2}(r^{\sigma}+1)^{3/2}}
\sim \frac{1}{r^{1+2\sigma}} , \qquad r_0 \leq 1\leq r
\eq
for $1<\sigma\leq 4/3$.
The argument used for \eqref{note_7} also works for $\sigma>4/3$ though, the following approach allows us to require slightly less regularity when $r>1$. 
Indeed, when $\sigma>4/3$ the condition $r\ge 1$ implies  
\[
|p''(r)|=\frac{2\sigma(\sigma-1)r^{\sigma}-\sigma(2-\sigma)}{4 r^{1+\sigma/2}(r^{\sigma}+1)^{3/2}} \gtrsim  
	\frac{\sigma r^{\sigma} \{ 2(\sigma-1)-(2-\sigma)\}}{4 r^{1+2\sigma}} 
	\gtrsim \frac{1}{r^{1+\sigma}}
	\]
	and
	\[
	|p''(r)|\sim \frac{2\sigma(\sigma-1)r^{\sigma}-\sigma(2-\sigma)}{4 r^{1+2\sigma}} \lesssim \frac{1}{r^{1+\sigma}}.
\]
Hence, we have
\[
|p''(r)|\sim \frac{1}{r^{1+\sigma}}, \qquad r\ge1
\]
for $4/3<\sigma<2$.

%
%
%
%
%
%
%
%
%
\section{Proof of Lemma \ref{GNT_estimate_product}}\label{proof_Lemma_product}
Here, we provide detailed proof for Lemma \ref{GNT_estimate_product}. We only establish \eqref{bilinear_11} since the other estimate can be derived through a similar argument. We write $B=B_{\mathfrak{M}}$ for simplicity.

By using the Littlewood--Paley theorem, we get
	\bq\label{note_3}
		\|\lal \nabla\ral^{\alpha_1} D^{\alpha_2} B[f, g]\|_{L^{l_1'}}^2 \lesssim \lt\|P_{\leq 1} B[f, g] \rt\|_{L^{l_1'}}^2	+\sum_{N>1}N^{2(\alpha_1+\alpha_2)}\lt\|P_{N} B[f, g]\rt\|_{L^{l_1'}}^2,
	\eq
	where $D^{\alpha_2}=\lal |\nabla|^{\alpha_2} \ral$ or $D^{\alpha_2}=|\nabla|^{\alpha_2}$.
	The first term can be estimated as follows:
\[
\lt\|P_{\leq 1} B[f, g] \rt\|_{L^{l_1'}} \lesssim \|f \|_{L^{l_2}} \|g\|_{L^2} .
\]
For the second term, we observe
	\[
	\begin{aligned}
		\lt\|P_{N}B[f, g] \rt\|_{L^{l_1'}}
		\lesssim  \|P_{N} B[P_{\leq \frac{N}{8}}f, g] \|_{L^{l_1'}}
		+\sum_{M>\frac{N}{8}}   \|P_{N} B[P_{M}f, g] \|_{L^{l_1'}}.
	\end{aligned}
	\]
By using \eqref{bilinear_1}, we deduce
	\[
	\begin{aligned}
		\|P_{N} B[P_{\leq \frac{N}{8}} f, g] \|_{L^{l_1'}} 
		 \lesssim  \| B[P_{\leq \frac{N}{8}}f, P_{\frac{N}{8} <\cdot<8N}g]  \|_{L^{l_1'}} \lesssim \|P_{\leq \frac{N}{8}}f \|_{L^{l_2}} \| P_{\frac{N}{8} <\cdot<8N}g\|_{L^{2}} \lesssim \|f \|_{L^{l_2}} \sum_{M\sim N}\| P_{M}g\|_{L^{2}}.
	\end{aligned}
	\]
Consequently, we arrive at
	\[
	\begin{aligned}
	\sum_{N>1}N^{2(\alpha_1+\alpha_2)}  \|P_{N} B[P_{\leq \frac{N}{8}} f, g] \|_{L^{l_1'}}^2
	&\lesssim \|f \|_{L^{l_2}}^2	\sum_{N>1}\lt(\sum_{M\sim N}N^{\alpha_1+\alpha_2}\| P_{M}g\|_{L^{2}} \rt)^2 \\
	&\lesssim \|f \|_{L^{l_2}}^2	\sum_{M\gtrsim 1}M^{2(\alpha_1+\alpha_2)}\| P_{M}g\|_{L^{2}}^2\\
	&\lesssim \|f \|_{L^{l_2}}^2\| g\|_{\dot{H}^{\alpha_1+\alpha_2}}^2.
	\end{aligned}
	\]
	Meanwhile, we find
	\[
	\begin{aligned}
		\sum_{M>\frac{N}{8}}   \|P_{N} B[P_{M}f, g] \|_{L^{l_1'}}
		&\lesssim  \sum_{M>\frac{N}{8}} \|P_{M}f \|_{L^2} \|g\|_{L^{l_2}} \lesssim  \sum_{M>\frac{N}{8}} \frac{1}{M^{\alpha_1+\alpha_2}}\|P_{M}f \|_{\dot{H}^{\alpha_1+\alpha_2}} \|g\|_{L^{l_2}}
	\end{aligned}
	\]
	due to \eqref{bilinear_1}.
Then, it follows that
	\[
	\begin{aligned}
	&\sum_{N>1}N^{2(\alpha_1+\alpha_2)}  \lt( \sum_{M>\frac{N}{8}}   \|P_{N} B[P_{M}f, g] \|_{L^{l_1'}}\rt)^2  \\
		&\quad \lesssim \sum_{N>1}  \lt( \sum_{M>\frac{N}{8}}\lt( \frac{N}{M}\rt)^{\alpha_1+\alpha_2}\|P_{M}f \|_{\dot{H}^{\alpha_1+\alpha_2}}\|g\|_{L^{l_2}} \rt)^2 \\
		&\quad \lesssim\|g\|_{L^{l_2}} ^2
		\sum_{N>1} \lt(  \sum_{M>\frac{N}{8}} \lt( \frac{N}{M}\rt)^{\alpha_1+\alpha_2}\sum_{M>\frac{N}{8}}\lt( \frac{N}{M}\rt)^{\alpha_1+\alpha_2}\|P_{M}f \|_{\dot{H}^{\alpha_1+\alpha_2}}^2\rt)\\
		&\quad \lesssim\|g\|_{L^{l_2}} ^2
		\sum_{M>\frac{1}{8}} \sum_{1<N<8M} \lt( \frac{N}{M}\rt)^{\alpha_1+\alpha_2}\|P_{M}f \|_{\dot{H}^{\alpha_1+\alpha_2}}^2\\
		&\quad \lesssim\|g\|_{L^{l_2}} ^2
		\sum_{M>\frac{1}{8}}\|P_{M}f \|_{\dot{H}^{\alpha_1+\alpha_2}}^2
	\end{aligned}
	\]
	by the Cauchy--Schwartz inequality and the fact that
	\[
	\sum_{N<8M} \lt( \frac{N}{M} \rt)^{\alpha_1+\alpha_2} \lesssim 1.
	\]
	Therefore, we obtain
	\[
	\begin{aligned}
		\sum_{N>1}N^{2(\alpha_1+\alpha_2)}\lt\|P_{N} B[f, g]\rt\|_{L^{l_1'}}^2 
		&\lesssim \|f \|_{L^{l_2}} ^2\sum_{N\gtrsim 1}\| P_{N}g\|_{\dot{H}^{\alpha_1+\alpha_2}} ^2 +\|g\|_{L^{l_2}} ^2
		\sum_{M>\frac{1}{8}}\|P_{M}f \|_{\dot{H}^{\alpha_1+\alpha_2}}^2\\
		&\lesssim \|f \|_{L^{l_2}}^2 \|g\|_{\dot{H}^{\alpha_1+\alpha_2}}^2 +\|g\|_{L^{l_2}} ^2
		\|f \|_{\dot{H}^{\alpha_1+\alpha_2}}^2.
	\end{aligned}
	\]
	Hence combining the estimates for the first and the second terms of \eqref{note_3}, we conclude the desired result.
%
%
%
%
%
%
%
%
%

\end{document}